\theoremstyle{plain}
\newtheorem{thm}{Theorem}
\newtheorem{prop}{Proposition}
\newtheorem{cor}[prop]{Corollary}
\newtheorem{lem}[prop]{Lemma}
\newtheorem{defn}{Definition}
\newtheorem{rem}{Remark}
\newtheorem{ex}{Example}
\newcommand{\ra}{\rightarrow}
\newcommand{\we}{\wedge}
\newcommand{\X}{\mathbf{X}}
\newcommand{\D}{\mathbf{D}}
\newcommand{\KA}{\mbox{{\Large $\kappa$}}}
\newcommand{\Ra}{\Rightarrow}
\newcommand{\f}{(\ )^*}
\newcommand{\edge}[1]{\ar@{-}[#1]}
\newcommand{\clx}{\overline{\{x\}}}
\newcommand{\cly}{\overline{\{y\}}}
\newcommand{\sat}{\mathrm{sat}}
\newcommand{\pd}{\leq_{d}}
\newcommand{\Fil}{\mathsf{Fil}}
\newcommand{\va}{\mathsf{V}}
\newcommand{\h}{\mathsf{H}}
\newcommand{\s}{\mathsf{S}}
\newcommand{\p}{\mathsf{P}}
\newcommand{\is}{\mathsf{I}}
\newcommand{\K}{\mathrm{K}}
\newcommand{\U}{\mathrm{U}}
\newcommand{\Sg}{\mathrm{Sg}}
\newcommand{\Hey}{\mathsf{Hey}}
\newcommand{\PHey}{\mathsf{PHey}}
\newcommand{\Hil}{\mathsf{Hil}}
\newcommand{\Hilz}{\mathsf{Hil_0}}
\newcommand{\PHil}{\mathsf{PHil}}
\newcommand{\PHilz}{\mathsf{PHil_0}}
\newcommand{\fPHilz}{\mathsf{fPHil_0}}
\newcommand{\HS}{\mathsf{HS}}
\newcommand{\PHS}{\mathsf{PHS}}
\newcommand{\PHSz}{\mathsf{PHS_0}}
\newcommand{\IS}{\mathsf{IS}}
\newcommand{\Esa}{\mathsf{Es}}
\newcommand{\FC}{\mathrm{S}}
\newcommand{\G}{\mathrm{G}}
\newcommand{\cPHey}{\coprod_{\PHey}}
\newcommand{\cPHilz}{\coprod_{\PHilz}}
\newcommand{\cfPHilz}{\coprod_{f}}
\newcommand{\Pos}{\mathrm{Pos}}
\newcommand{\Esap}{\mathrm{PEs}}
\newcommand{\Ffin}{\mathrm{F_{fin}}}
\newcommand{\hFor}{\mathrm{hFor}}
\newcommand{\ChFor}{\mathrm{ChFor}}
\newcommand{\US}{\mathrm{US}}
\begin{document}

\title{Prelinear Hilbert algebras}

\author{Castiglioni Jos\'e Luis, Celani Sergio A. and San Mart\'{\i}n Hern\'an J.}

\maketitle

\begin{abstract}
In this paper we give an explicit description of the left adjoint
of the forgetful functor from the algebraic category of G\"odel algebras (i. e., prelinear
Heyting algebras) to the algebraic category of bounded prelinear
Hilbert algebras. We apply this result in order to study possible
descriptions of the coproduct of two finite algebras in the
algebraic category of prelinear Hilbert algebras.
\end{abstract}

\section{Introduction and basic results}\label{s1}

We assume that the reader is familiar with the theory of Heyting
algebras \cite{BD}, which is the algebraic counterpart of
Intuitionistic Propositional Logic. Hilbert algebras represent the
algebraic counterpart of the implicative fragment of
Intuitionistic Propositional Logic, and they were introduced in
early 50's by Henkin for some investigations of implication in
intuicionistic and other non-classical logics (\cite{R}, pp. 16).
In the 60's, Hilbert algebras were studied especially by Horn
\cite{H} and Diego \cite{D}.

\begin{defn}
A \emph{Hilbert algebra} is an algebra $(H,\ra,1)$ of type $(2,0)$
which satisfies the following conditions for every $a,b,c\in H$:
\begin{enumerate}[\normalfont a)]
\item $a\ra (b\ra a) = 1$, \item $(a\ra (b\ra c)) \ra ((a\ra b)\ra
(a\ra c)) = 1$, \item if $a\ra b = b\ra a = 1$ then $a = b$.
\end{enumerate}
\end{defn}

It is known that the class of Hilbert algebras is a variety. In
every Hilbert algebra we have the partial order $\leq$ given by
\[
a\leq b\;\text{if and only if}\; a\ra b = 1,
\]
which is called \emph{natural order}. Relative to the natural
order on $H$ we have that $1$ is the greatest element.

We say that an algebra $(H,\ra,1,0)$ of type $(2,0,0)$ is a
\emph{bounded} Hilbert algebra if $(H,1)$ is a Hilbert algebra and
$0\leq a$ for every $a\in H$.

The following lemma is part of the folklore of Hilbert algebras
(see for example \cite{B}).

\begin{lem}
Let $H$ be a Hilbert algebra and $a,b,c\in H$. Then the following
conditions are satisfied:
\begin{enumerate}[\normalfont a)]
\item $a\ra a = 1$, \item $1\ra a = a$, \item $a\ra (b\ra c) =
b\ra (a \ra c)$, \item $a\ra (b\ra c) = (a\ra b)\ra (a\ra c)$,
\item if $a\leq b$ then $c\ra a \leq c\ra b$ and $b\ra c \leq a\ra
c$.
\end{enumerate}
\end{lem}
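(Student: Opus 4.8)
The plan is to work entirely with the natural order $\leq$ and to reduce each of the five claims, via antisymmetry (the third defining axiom), to inequalities of the form $x\ra y=1$. In place of modus ponens I will lean on one elementary observation, call it $(\star)$: if $1\ra x=1$ then $x=1$. This holds because $x\ra 1=1$ (as $1$ is the greatest element), so antisymmetry applies. From $(\star)$ I extract a restricted detachment rule: if $P=1$ and $P\ra Q=1$, then, since $P$ and $1$ denote the same element, $P\ra Q$ and $1\ra Q$ coincide, so $1\ra Q=1$ and $(\star)$ gives $Q=1$. Everything below is built from the two defining axioms a) and b) together with $(\star)$.

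First I would record transitivity of $\leq$, using only axioms a) and b): if $a\ra b=1$ and $b\ra c=1$, then axiom a) yields $b\ra c\leq a\ra(b\ra c)$, so $a\ra(b\ra c)=1$ by $(\star)$; axiom b) then gives $a\ra b\leq a\ra c$, whence $a\ra c=1$. This makes $\leq$ a genuine partial order and lets me chain inequalities freely.

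Next come the two foundational identities, which are the real crux of the bootstrap. For $a\ra a=1$ I would run the classical Hilbert derivation: instantiate axiom b) with second and third arguments $a\ra a$ and $a$, feed in the two instances $a\ra((a\ra a)\ra a)=1$ and $a\ra(a\ra a)=1$ of axiom a), and detach twice by $(\star)$. For $1\ra a=a$, the inequality $a\leq 1\ra a$ is immediate from axiom a); for the reverse I would instantiate axiom b) to obtain $\bigl((1\ra a)\ra(1\ra a)\bigr)\ra\bigl(((1\ra a)\ra 1)\ra((1\ra a)\ra a)\bigr)=1$, note that the antecedent $(1\ra a)\ra(1\ra a)$ is $1$ by the reflexivity just proved and that $(1\ra a)\ra 1=1$ since $1$ is greatest, and then collapse twice by $(\star)$ to reach $(1\ra a)\ra a=1$; antisymmetry gives the identity.

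With these in hand the monotonicity and permutation laws are formal. For part e) I would treat the halves separately: the covariant one, $a\leq b\Rightarrow c\ra a\leq c\ra b$, follows from axioms a), b) and $(\star)$ directly, while the contravariant one, $b\ra c\leq a\ra c$, comes from the chain $b\ra c\leq a\ra(b\ra c)\leq(a\ra b)\ra(a\ra c)=1\ra(a\ra c)=a\ra c$, closed by transitivity, where the penultimate equality is exactly the unit law together with $a\ra b=1$. Then the permutation law $a\ra(b\ra c)=b\ra(a\ra c)$ is obtained by combining axiom b) with the contravariant half of e) applied to $b\leq a\ra b$ (which gives $(a\ra b)\ra(a\ra c)\leq b\ra(a\ra c)$) and symmetrizing in $a,b$; finally $a\ra(b\ra c)=(a\ra b)\ra(a\ra c)$ is these same two inequalities read as an equality, its reverse direction being precisely that application of e) followed by the permutation law. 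The main obstacle is the ordering of the bootstrap: reflexivity and the unit law must be pried out using only the weak detachment $(\star)$, since ordinary modus ponens already presupposes the unit law; once those two are secured, the remaining clauses are routine order manipulations sealed by antisymmetry.
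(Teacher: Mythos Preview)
Your argument is correct. The paper does not actually prove this lemma: it is stated as ``part of the folklore of Hilbert algebras'' with a citation to \cite{B}, and no proof is supplied. So there is nothing in the paper to compare your approach against; what you have written is a complete, self-contained derivation that the paper omits.

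Your bootstrap is the standard one and is carried out carefully: the weak detachment principle $(\star)$ (from antisymmetry together with $x\to 1=1$) is exactly what is needed to run the classical Hilbert-calculus derivation of $a\to a=1$ inside the algebra, and your derivation of $1\to a=a$ via the instance of axiom b) with first argument $1\to a$ is clean. The deductions of c), d), e) from these two identities are the usual order-theoretic manipulations and are correct as you describe them.

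One minor foundational remark worth flagging, since you explicitly worried about the ordering of the bootstrap: your principle $(\star)$ rests on $x\to 1=1$, which you justify by ``$1$ is the greatest element''. In the paper that fact is asserted just before the lemma, so you are entitled to use it here. In a fully self-contained treatment, however, $x\to 1=1$ is itself usually obtained \emph{after} $a\to a=1$, via $x\to 1 = x\to(x\to x)=1$ (an instance of axiom a)). If you ever need to present the argument from the bare axioms alone, you would have to reorganize the opening steps to avoid this mild circularity; as written, relative to what the paper grants you, there is no gap.
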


Some additional properties of Hilbert algebras can be found in
\cite{B,D}.

For the general development of Hilbert algebras, the notion of
implicative filter plays an important role. Let $H$ be a Hilbert
algebra. A subset $F\subseteq H$ is said to be an
\emph{implicative filter} if $1\in F$ and $b\in F$ whenever $a\in
F$ and $a\ra b \in F$. We will denote the set of all implicative
filters of $H$ by $\Fil(H)$. Recall that if $X \subseteq H$, we
define the filter generated by $X$ as the least filter of $H$ that
contains the set $X$, which will be denoted by $F(X)$. There is an
explicit description for $F(X)$ (see \cite[Lemma 2.3]{Bu}). More
precisely, if $X\neq \emptyset$ we have that
\[
F(X) =\{x \in H: a_1\ra (a_2\ra \cdots (a_n \ra x) \ldots)=1
\;\text{for some}\; a_{1}, \ldots,a_n\in X\}
\]
and $F(\emptyset) = \{1\}$. Let us consider a poset $(X,\leq)$.
For each $Y \subseteq X$, \emph{the increasing set generated by
$Y$} is defined by $[Y) = \{x\in X:\;\text{there is}\;y\in
Y\:\text{such that}\;y\leq x\}$. The \emph{decreasing set
generated by $Y$} is dually defined. If $Y = \{y\}$, then we will
write $[y)$ and $(y]$ instead of $[\{y\})$ and $(\{y\}]$,
respectively. We say that $Y$ is an \emph{upset} if $Y = [Y)$, and
a downset if $Y = (Y]$. If $H$ is a Hilbert algebra then every
filter of $H$ is an upset and for every $a\in H$ we have that
$F(\{a\}) = [a)$.

Let $H$ be a Hilbert algebra and $F\in \Fil(H)$. We say that $F$
is \emph{irreducible} if $F$ is proper (i.e., $F\neq H$) and for
any implicative filters $F_1, F_2$ such that $F = F_1 \cap F_2$ we
have that $F = F_1$ or $F = F_2$. We write $X(H)$ for the set of
irreducible implicative filters of $H$. We also write $X(H)$ for
the poset of irreducible implicative filters of $H$ where the
order is given by the inclusion.

The proof of the following lemma can be found in \cite{D}.

\begin{lem} \label{irred}
Let $H$ be a Hilbert algebra and $F\in\mathrm{Fi}(H)$. The
following statements are equivalent:
\begin{enumerate}[\normalfont 1.]
\item $F\in X(H)$. \item For every $a,b\in H$ such that $a,b\notin
F$ there exists $c\notin F$ such that $a,b\leq c$. \item For every
$a,b\in H$ such that $a,b\notin F$ there exists $c\notin F$ such
that $a\rightarrow c,b\rightarrow c\in F$.
\end{enumerate}
\end{lem}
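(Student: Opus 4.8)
The plan is to establish the equivalence as a cycle $1 \Rightarrow 3 \Rightarrow 2 \Rightarrow 1$. Throughout I regard conditions 2 and 3 as referring to a \emph{proper} filter $F$: properness is part of the definition of $X(H)$ and is the only ingredient beyond the displayed conditions that the converse directions need (for $F=H$ both conditions hold vacuously). For $1 \Rightarrow 3$, fix $a,b\notin F$ and introduce
$G_a=\{x\in H: a\ra x\in F\}$ and $G_b=\{x\in H: b\ra x\in F\}$. Using the preliminary lemma one checks that these are implicative filters: $1\in G_a$ since $a\ra 1=1$, and the identity of part d), namely $a\ra(x\ra y)=(a\ra x)\ra(a\ra y)$, converts closure under modus ponens for $G_a$ into closure under modus ponens for $F$. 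Since every filter is an upset and $x\leq a\ra x$ by the first definitional axiom, we get $F\subseteq G_a$, while $a\in G_a\setminus F$; hence $F\subsetneq G_a$, and likewise $F\subsetneq G_b$. As $F\subseteq G_a\cap G_b$ and $F$ is irreducible, the equality $F=G_a\cap G_b$ is impossible (it would force $F=G_a$ or $F=G_b$), so some $c\in G_a\cap G_b$ lies outside $F$; this $c$ satisfies $a\ra c,\,b\ra c\in F$, which is exactly condition 3.

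The heart of the argument is $3 \Rightarrow 2$, and this is the step I expect to be the main obstacle. Given $a,b\notin F$, take $c\notin F$ with $a\ra c,\,b\ra c\in F$ as supplied by 3, and set
\[
c^{*}=(a\ra c)\ra\big((b\ra c)\ra c\big).
\]
I would then verify $a\leq c^{*}$ and $b\leq c^{*}$ purely from the identities of the preliminary lemma: repeated use of $x\ra(y\ra z)=y\ra(x\ra z)$ (part c)) reduces $a\ra c^{*}$ to $(a\ra c)\ra\big((b\ra c)\ra(a\ra c)\big)$, which equals $1$ by axiom a) of the definition, and reduces $b\ra c^{*}$ to $(a\ra c)\ra 1=1$. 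Finally $c^{*}\notin F$: if $c^{*}\in F$, then modus ponens against $a\ra c\in F$ gives $(b\ra c)\ra c\in F$, and a second modus ponens against $b\ra c\in F$ gives $c\in F$, contradicting $c\notin F$. Thus $c^{*}$ is a common upper bound of $a$ and $b$ lying outside $F$, establishing 2. The only delicate point is guessing the witness $c^{*}$; once it is written down, every verification is a routine manipulation with a), c) and d) and with the filter property.

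For $2 \Rightarrow 1$, suppose toward a contradiction that $F=F_1\cap F_2$ with $F\neq F_1$ and $F\neq F_2$, and choose $a\in F_1\setminus F$ and $b\in F_2\setminus F$. Condition 2 yields $c\notin F$ with $a,b\leq c$; since $F_1$ and $F_2$ are upsets, $a\leq c$ forces $c\in F_1$ and $b\leq c$ forces $c\in F_2$, whence $c\in F_1\cap F_2=F$, a contradiction. Hence $F$ admits no nontrivial decomposition and, being proper, lies in $X(H)$. This closes the cycle and proves the equivalence of the three conditions.
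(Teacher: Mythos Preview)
Your proof is correct. The paper does not actually supply its own argument for this lemma; it merely states that the proof ``can be found in \cite{D}'' (Diego's monograph), so there is no in-paper proof to compare against. Your cycle $1\Rightarrow 3\Rightarrow 2\Rightarrow 1$ is the standard route: the filters $G_a=\{x:a\ra x\in F\}$ are exactly Diego's device for producing a witness in $3$, and the element $c^{*}=(a\ra c)\ra((b\ra c)\ra c)$ is the classical ``weak join relative to $F$'' used to pass from $3$ to $2$. All three verifications you outline go through with the identities from the preliminary lemma exactly as you indicate. Your remark that properness of $F$ is needed for $2\Rightarrow 1$ and $3\Rightarrow 1$ (and is vacuous otherwise) is well observed; as the lemma is stated in the paper this hypothesis is implicit.
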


Let $H$ be a Hilbert algebra. An \emph{order ideal} of $H$ is a
downset $I$ of $H$ such that for each $a,b\in I$, there exists
$c\in I$ such that $a\leq c$ and $b\leq c$.

The following is \cite[Theorem 2.6]{Cel}.

\begin{thm} \label{tfp}
Let $H$ be a Hilbert algebra, $F\in\mathrm{Fi}(H)$ and $I$ an
order ideal of $H$ such that $F\cap I=\emptyset$. Then there
exists $P\in X(H)$ such that $F\subseteq P$ and $P\cap
I=\emptyset$.
\end{thm}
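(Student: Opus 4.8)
The plan is to obtain $P$ as a filter that is maximal among those extending $F$ and avoiding $I$, and then to verify irreducibility by means of the characterization in Lemma~\ref{irred}. First I would consider the family $\mathcal{X}$ of all implicative filters $G$ with $F\subseteq G$ and $G\cap I=\emptyset$, partially ordered by inclusion. This family is nonempty since $F\in\mathcal{X}$, and the union of any chain in $\mathcal{X}$ is again an implicative filter (a union of a chain of filters) that contains $F$ and stays disjoint from $I$, hence an upper bound in $\mathcal{X}$. By Zorn's Lemma, $\mathcal{X}$ has a maximal element $P$; since $I\neq\emptyset$ and $P\cap I=\emptyset$, the filter $P$ is proper.

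Next I would prove $P\in X(H)$ by checking condition~3 of Lemma~\ref{irred}. Fix $a,b\notin P$. By maximality of $P$, the filter $F(P\cup\{a\})$ strictly contains $P$ and therefore meets $I$; choose $u\in I\cap F(P\cup\{a\})$. Using the explicit description of the generated filter recalled in the introduction, there are finitely many generators in $P\cup\{a\}$ whose iterated implication into $u$ equals $1$. The key step is to normalise this expression: by the commutation of antecedents $a\ra(b\ra c)=b\ra(a\ra c)$ I can gather all occurrences of $a$, and by the idempotency $a\ra(a\ra z)=a\ra z$ (which follows from $a\ra(b\ra c)=(a\ra b)\ra(a\ra c)$ together with $a\ra a=1$ and $1\ra z=z$) I can collapse them to a single occurrence, obtaining $p_1\ra(\cdots\ra(p_n\ra(a\ra u))\cdots)=1$ with $p_1,\ldots,p_n\in P$. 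Here $a$ must genuinely occur, for otherwise $u\in F(P)=P$ would contradict $P\cap I=\emptyset$. This equation says precisely that $a\ra u\in F(\{p_1,\ldots,p_n\})\subseteq P$, so $a\ra u\in P$. Symmetrically there is $v\in I$ with $b\ra v\in P$.

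Finally I would combine these using the order-ideal property of $I$: there exists $w\in I$ with $u\leq w$ and $v\leq w$. Since $u\leq w$ implies $a\ra u\leq a\ra w$ (monotonicity in the consequent) and $P$ is an upset, we get $a\ra w\in P$, and likewise $b\ra w\in P$. As $w\in I$ and $P\cap I=\emptyset$ we have $w\notin P$, so $c=w$ witnesses condition~3 of Lemma~\ref{irred}. Hence $P$ is irreducible, i.e. $P\in X(H)$, with $F\subseteq P$ and $P\cap I=\emptyset$, as required.

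The main obstacle I anticipate is the normalisation step that converts membership in $F(P\cup\{a\})$ into the clean statement $a\ra u\in P$; this is where the Hilbert-algebra identities (commuting antecedents and the idempotency $a\ra(a\ra z)=a\ra z$) do the real work, and some care is needed because $P$ is not assumed closed under any binary meet, so the generators $p_1,\ldots,p_n$ cannot simply be replaced by a single element — instead one keeps them on the left and relies on $F(\{p_1,\ldots,p_n\})\subseteq P$.
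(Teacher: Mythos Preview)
The paper does not actually prove this theorem; it simply records it as \cite[Theorem~2.6]{Cel}. Your argument---apply Zorn's Lemma to obtain a filter $P$ maximal among those extending $F$ and disjoint from $I$, then verify irreducibility via condition~3 of Lemma~\ref{irred} using the explicit description of generated filters together with the identities $a\ra(b\ra c)=b\ra(a\ra c)$ and $a\ra(a\ra z)=a\ra z$---is correct and is precisely the standard proof (and the one given in the cited reference). One minor point: you invoke $I\neq\emptyset$ to conclude that $P$ is proper, while the statement as written does not assume this; the assumption is harmless here since order ideals are conventionally taken nonempty and every use of the theorem in the paper has $I\neq\emptyset$, but you may wish to note it explicitly.
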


The following corollaries are known in the literature, and can be
obtained by using the previous theorem.

\begin{cor}\label{tfpc2}
Let $H$ be a Hilbert algebra and $a,b\in H$ such that $a\nleq b$.
Then there exists $P\in X(H)$ such that $a\in P$ and $b\notin P$.
\end{cor}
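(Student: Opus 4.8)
The plan is to derive Corollary~\ref{tfpc2} as a direct consequence of Theorem~\ref{tfp} by making the right choices of filter and order ideal. Given $a,b\in H$ with $a\nleq b$, I would take the filter $F=[a)=F(\{a\})$, which by the discussion preceding Lemma~\ref{irred} is the principal filter generated by $a$, and I would take the order ideal $I=(b]$, the principal downset generated by $b$. The first task is to verify that $I=(b]$ is genuinely an order ideal in the sense defined above: it is a downset by construction, and for any $x,y\in(b]$ we have $x\le b$ and $y\le b$, so $c=b$ itself witnesses the required upper bound inside $I$. Hence $I$ is an order ideal.

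The crucial step is to check that $F\cap I=\emptyset$, and this is exactly where the hypothesis $a\nleq b$ is used. Suppose for contradiction that some element $z$ lies in both $[a)$ and $(b]$. Then $a\le z$ and $z\le b$, whence by transitivity of the natural order $a\le b$, contradicting $a\nleq b$. Therefore $[a)\cap(b]=\emptyset$, and the hypotheses of Theorem~\ref{tfp} are satisfied.

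Applying Theorem~\ref{tfp} to $F=[a)$ and $I=(b]$ yields an irreducible filter $P\in X(H)$ with $[a)\subseteq P$ and $P\cap(b]=\emptyset$. It remains to translate these two conclusions into the statement of the corollary. Since $a\in[a)\subseteq P$, we get $a\in P$. Since $b\in(b]$ and $P\cap(b]=\emptyset$, we get $b\notin P$. This is precisely the desired conclusion, so $P$ is the required irreducible filter separating $a$ from $b$.

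I expect no serious obstacle here; the entire argument is a routine specialization of Theorem~\ref{tfp}. The only point demanding any care is confirming that the principal downset $(b]$ satisfies the upper-bound (upward-directedness) clause in the definition of order ideal, which could be overlooked, but it holds trivially because $b$ is the top element of $(b]$ and serves as the common upper bound.
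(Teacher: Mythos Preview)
Your proposal is correct and matches the paper's approach: the paper does not spell out a proof but simply remarks that the corollary ``can be obtained by using the previous theorem,'' i.e., Theorem~\ref{tfp}, which is exactly what you do by taking $F=[a)$ and $I=(b]$. Your verification that $(b]$ is an order ideal and that $[a)\cap(b]=\emptyset$ is the routine unpacking the paper leaves to the reader.
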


\begin{cor} \label{tfpc3}
Let $H$ be a Hilbert algebra, $F\in \Fil(H)$ and $a,b\in H$. Then
$a\ra b \notin F$ if and only if there is $P\in X(H)$ such that
$F\subseteq P$, $a\in P$ and $b\notin P$.
\end{cor}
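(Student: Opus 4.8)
The plan is to prove the easy implication directly from the filter axioms and to obtain the substantive implication as an application of Theorem~\ref{tfp}. For the ``if'' direction, suppose there is $P\in X(H)$ with $F\subseteq P$, $a\in P$ and $b\notin P$, and assume towards a contradiction that $a\ra b\in F$. Then $a\ra b\in P$ and $a\in P$, so, since $P$ is an implicative filter, modus ponens would give $b\in P$, contradicting $b\notin P$. Hence $a\ra b\notin F$.

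For the ``only if'' direction, assume $a\ra b\notin F$. The idea is to produce a filter and a disjoint order ideal to which Theorem~\ref{tfp} can be applied. First I would take the filter $G=F(F\cup\{a\})$ generated by $F\cup\{a\}$ and the set $I=(b]$. Note that $(b]$ is an order ideal: it is a downset by definition, and it is directed because any two of its elements lie below $b\in(b]$.

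The key step is to describe $G$ explicitly as $G=\{x\in H: a\ra x\in F\}$. To check that the right-hand side is a filter containing $F\cup\{a\}$ and contained in every such filter, I would use the basic identities of Hilbert algebras recorded above: the identity $a\ra(x\ra y)=(a\ra x)\ra(a\ra y)$ shows closure under modus ponens, the inequality $x\leq a\ra x$ (a consequence of axiom a)) together with the fact that filters are upsets gives $F\subseteq\{x: a\ra x\in F\}$, and $a\ra a=1\in F$ gives $a\in\{x: a\ra x\in F\}$; minimality is immediate, since any filter containing $F\cup\{a\}$ must, by modus ponens, contain every $x$ with $a\ra x\in F$. Granting this description, $b\in G$ holds precisely when $a\ra b\in F$, so our hypothesis yields $b\notin G$.

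Finally I would verify that $G\cap(b]=\emptyset$: if some $x$ belonged to both, then $x\leq b$ and $x\in G$ would force $b\in G$ (filters are upsets), contradicting $b\notin G$. Applying Theorem~\ref{tfp} to the filter $G$ and the order ideal $(b]$ produces $P\in X(H)$ with $G\subseteq P$ and $P\cap(b]=\emptyset$. Then $F\subseteq G\subseteq P$, $a\in G\subseteq P$, and $b\in(b]$ gives $b\notin P$, as required. I expect the only real obstacle to be the verification of the explicit form of $G$; once that identity is in place, the disjointness and the appeal to Theorem~\ref{tfp} are routine.
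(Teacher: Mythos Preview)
Your proof is correct and is precisely the argument the paper has in mind: the paper does not spell out a proof but simply states that the corollary ``can be obtained by using the previous theorem'' (Theorem~\ref{tfp}), and your application of that theorem to the filter $G=F(F\cup\{a\})=\{x:a\ra x\in F\}$ and the order ideal $(b]$ is exactly the standard route. The explicit description of $G$ and the verification that $G\cap(b]=\emptyset$ are carried out carefully and correctly.
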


We give a table with some of the categories we shall consider in
this paper: \vspace{10pt}

\small

\begin{tabular}{c|c| c c}

$\mathbf{Category}$ &       $\mathbf{Objects}$                                                     & $\mathbf{Morphisms}$ \\
\hline
&&\\
$\Hey$                      &Heyting algebras                                                      & Algebra homomorphisms   \\
$\PHey$                     &Prelinear Heyting algebras                                            & Algebra homomorphisms   \\
$\Hil$                      &Hilbert algebras                                                      & Algebra homomorphisms       \\
$\Hilz$                     &Bounded Hilbert algebras                                              & Algebra homomorphisms\\
$\PHil$                     &Prelinear Hilbert algebras                                            & Algebra homomorphisms\\
$\PHilz$                    &Bounded Prelinear Hilbert algebras                                    & Algebra homomorphisms \\
$\fPHilz$                   &Bounded finite prelinear Hilbert algebras                             & Algebra homomorphisms \\
$\IS$                       &Implicative semilattices                                              & Algebra homomorphisms \\
$\Pos$                      &Posets                                                                & Order preserving maps \\
$\Esap$                     &Esakia spaces which are root systems                                  & Continuous p-morphisms \\
$\HS$                       &Hilbert spaces                                                        & Certain continuous maps \\
$\PHS$                      &Hilbert spaces which are root systems                                 & Certain morphisms of $\HS$  \\
$\PHSz$                     &Certain objects of $\PHS$                                             & Certain morphisms of $\PHS$ \\
$\Ffin$                     &Finite forests                                                        & Open maps \\
$\ChFor$                    &Finite forests with a family of its subsets                           & Certain binary relations \\
$\hFor$                     &Finite forests with a family of its subsets                           & Certain open maps \\

\end{tabular}

\vspace{10pt}

The paper is organized as follows. In Section \ref{s2} we study
some properties of the variety of prelinear Hilbert algebras,
which was considered by Monteiro in \cite{A. Monteiro 1996}. In
Section \ref{s3} we present a categorical equivalence for the
algebraic category of bounded prelinear Hilbert algebras. We use
it in order to give an explicit description of the left adjoint of
the forgetful functor from the algebraic category of prelinear
Heyting algebras to the algebraic category of bounded prelinear
Hilbert algebras. The ideas used in this section are similar to
that developed in \cite{CSM}. In Section \ref{s4} we apply results
of the previous section in order to study some descriptions of the
coproduct of two finite algebras in the algebraic category of
bounded prelinear Hilbert algebras. In Section \ref{s5} we present
a description of the product in some category of finite forests.
In Section \ref{s6} we use results of Section \ref{s5} in order to
give an explicit description of the product of two objects in
certain category of finite forests $F$ endowed with a
distinguished family of subsets of $F$, which is equivalent to the
category of finite bounded prelinear Hilbert algebras. This
property allow us to obtain an explicit description of the
coproduct of two finite algebras in the algebraic category of
bounded prelinear Hilbert algebras. Finally, in Sectión \ref{s7}
we give an explicit description of the coproduct of two finite
algebras in the algebraic category of prelinear Hilbert algebras
in terms of the coproduct in the algebraic category of bounded
prelinear Hilbert algebras.

\section{Prelinear Hilbert algebras}\label{s2}

In this section we give some properties of prelinear Hilbert
algebras, which were introduced and studied by Monteiro in
\cite{A. Monteiro 1996}.

\begin{defn}
We say that a Hilbert algebra is a Hilbert chain if its natural
order is total. We call prelinear Hilbert algebra to the members
of the variety generated by the class of Hilbert chains.
\end{defn}

Let $H$ be a Hilbert algebra. For every $a,b,c \in H$ we define
\[
l(a,b,c) := ((a\ra b)\ra c)\ra(((b\ra a)\ra c)\ra c.
\]
Considerer the following equation:
\begin{equation*}
\mathrm{(P)} \;\text{}\;\;\;\; l(a,b,c)=1.
\end{equation*}

Let $\K$ be a class of algebras of the same type. As usual, we
write $\va(\K)$ for the variety generated by $\K$, $\is(K)$ for
the isomorphic members of $\K$, $\h(\K)$ for the homomorphic image
of members of $\K$, $\s(\K)$ for the subalgebras of members of
$\K$ and $\p(\K)$ for the direct product of members of $\K$.
Recall that by Tarski's theorem we have that $\h\s\p(\K) =
\va(\K)$ (see \cite{BD}).

The following is \cite[Theorem 2.2] {A. Monteiro 1996}. By clarity
in the exposition of the present paper we think it is convenient
to present a sketch of the proof.

\begin{prop} \label{mp1}
Let $\K$ be the class of Hilbert chains. Then the variety
$\va(\K)$ is characterized by an equational basis for the variety
of Hilbert algebras with the additional equation $\mathrm{(P)}$.
\end{prop}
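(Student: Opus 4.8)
The plan is to prove that $\va(\K)$ equals the variety $\mathsf{W}$ axiomatized by the Hilbert axioms together with $\mathrm{(P)}$. This is a two-way containment. The inclusion $\va(\K)\subseteq\mathsf{W}$ is the routine direction: since $\mathsf{W}$ is defined by equations, it is automatically closed under $\h$, $\s$, and $\p$, so by Tarski's theorem $\va(\K)=\h\s\p(\K)\subseteq\mathsf{W}$ will follow once I verify that every Hilbert \emph{chain} satisfies $\mathrm{(P)}$. First I would check this generating fact directly. Fix a Hilbert chain $H$ and $a,b,c\in H$; since the natural order is total, either $a\leq b$ or $b\leq a$. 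If $a\leq b$ then $a\ra b=1$, so $(a\ra b)\ra c=1\ra c=c$ (using Lemma~b), whence $l(a,b,c)=c\ra(((b\ra a)\ra c)\ra c)$, and this equals $1$ because $c\leq ((b\ra a)\ra c)\ra c$ (an instance of axiom~a, or Lemma~b combined with Lemma~a). The case $b\leq a$ is symmetric in the roles of the two inner antecedents. So every chain satisfies $\mathrm{(P)}$.

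The harder direction is $\mathsf{W}\subseteq\va(\K)$, for which I would use a subdirect representation argument. By general universal algebra every member of $\mathsf{W}$ is a subdirect product of subdirectly irreducible members of $\mathsf{W}$, and $\va(\K)$ is closed under $\s$ and $\p$; hence it suffices to show that every subdirectly irreducible algebra $H$ in $\mathsf{W}$ is a Hilbert chain (equivalently, embeds in a chain, so lies in $\s\p(\K)\subseteq\va(\K)$). The natural strategy is to prove that $\mathrm{(P)}$ forces the poset $X(H)$ of irreducible implicative filters to be a \emph{root system} (i.e.\ each principal final segment $[P)$ is totally ordered), and then to extract totality from subdirect irreducibility. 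Concretely, I would translate $\mathrm{(P)}$ into a separation statement about $X(H)$ using Corollary~\ref{tfpc3}: if $a,b$ were incomparable in some relevant localization, $\mathrm{(P)}$ failing at a suitable $c$ would produce, via Corollary~\ref{tfpc3}, an irreducible filter witnessing the incomparability, contradicting $l(a,b,c)=1$. This gives the prelinearity of the dual poset.

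The main obstacle is the final passage from ``root system'' to an actual embedding into a product of chains. The cleanest route is to observe that for each $P\in X(H)$ one gets a congruence whose quotient is totally ordered: Lemma~\ref{irred} characterizes irreducibility by the condition that outside $P$ any two elements lie below a common element outside $P$, and one checks that $\mathrm{(P)}$ upgrades this to a comparability statement modulo $P$, so that the quotient $H/\theta_P$ (for the congruence associated with the prime-like filter $P$) is a Hilbert chain. Since the filters in $X(H)$ separate points by Corollary~\ref{tfpc2}, the induced map $H\to\prod_{P\in X(H)}H/\theta_P$ is an embedding into a product of chains, placing $H$ in $\s\p(\K)\subseteq\va(\K)$. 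I expect the technical friction to be precisely in showing the quotient is linearly ordered rather than merely a root system — that is where equation $\mathrm{(P)}$ must be used in full strength, and where one must argue carefully that comparability of images in $H/\theta_P$ follows. Since the proposition only asks for a sketch, I would present the chain-satisfies-$\mathrm{(P)}$ computation and the subdirect-irreducible-is-a-chain reduction, citing \cite{A. Monteiro 1996} for the routine verifications.
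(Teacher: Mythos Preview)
Your proposal is correct and follows essentially the same route as the paper: both directions agree, and for the harder inclusion you argue that any $H$ satisfying $\mathrm{(P)}$ embeds into a product of chains via the quotients $H/\theta_P$ for $P\in X(H)$, which is exactly what the paper obtains by citing \cite[Theorem~7.1]{A. Monteiro 1996} (placing $H$ in $\is\s\p(\K)$). The only wrinkle is presentational: you announce a reduction to subdirectly irreducible algebras but then never use it, since your embedding $H\hookrightarrow\prod_{P\in X(H)} H/\theta_P$ works for arbitrary $H$; you could drop the subdirect-irreducible setup entirely, or alternatively finish that line by noting that a subdirectly irreducible $H$ has a minimum nontrivial filter $P$, which is then irreducible, and $H\cong H/\{1\}$ embeds in the chain $H/\theta_P$ only if $H$ was already a chain---but either way the argument lands in $\s\p(\K)$ as the paper does.
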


\begin{proof}
Since every Hilbert chain satisfies the equation $\mathrm{(P)}$
then every member of $\va(\K)$ satisfies $\mathrm{(P)}$.
Conversely, let $H$ be a Hilbert algebra which satisfies
$\mathrm{(P)}$. If $H$ is trivial then $H\in \va(\K)$. Assume that
$H$ is not trivial. Then it follows from \cite[Theorem 7.1]{A.
Monteiro 1996} that $H \in \is\s\p(\K)$. Since $\is\s\p(\K)
\subseteq \h\s\p(\K) = \va(\K)$ then $H\in \va(\K)$.
\end{proof}

By Proposition \ref{mp1} we can define a prelinear Hilbert algebra
as a Hilbert algebra which satisfies the equation $\mathrm{(P)}$.

Let $H$ be a Hilbert algebra and $a,b\in H$. If there exists the
supremum of $\{a,b\}$ with respect to the natural order of $H$
then we write $a\vee b$ for this element. In what follows we give
some another characterizations for the variety of prelinear
Hilbert algebras (\cite[Theorem 5.1]{A. Monteiro 1996}).

\begin{prop} \label{mp2}
Let $H$ be a Hilbert algebra. Then $H$ is prelinear if and only if
for every $a,b\in H$ it holds that $(a\ra b)\vee (b\ra a) = 1$.
\end{prop}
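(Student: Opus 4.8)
The plan is to prove the equivalence by translating the condition $(a\ra b)\vee(b\ra a)=1$ into a statement about upper bounds and then matching it with the equation $\mathrm{(P)}$. Write $u = a\ra b$ and $v = b\ra a$. Since $1$ is the greatest element of $H$, it is automatically an upper bound of $\{u,v\}$; hence $u\vee v$ exists and equals $1$ if and only if $1$ is the \emph{least} upper bound, that is, if and only if the only common upper bound of $u$ and $v$ is $1$ itself. So the whole proposition reduces to showing that, for every $a,b\in H$, this ``only-common-upper-bound-is-$1$'' condition is equivalent to $(u\ra c)\ra((v\ra c)\ra c)=1$ for all $c\in H$, which is precisely $\mathrm{(P)}$ with these substitutions.

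For the easy direction I would assume $\mathrm{(P)}$ and deduce the supremum condition. If $c$ is any common upper bound of $u$ and $v$, then $u\ra c = v\ra c = 1$ by definition of the natural order. Substituting these into the left-hand side of $\mathrm{(P)}$ and using the identity $1\ra x = x$ twice gives $1\ra(1\ra c)=c$, while $\mathrm{(P)}$ forces this expression to equal $1$; hence $c=1$. Thus the only common upper bound of $u$ and $v$ is $1$, so $u\vee v = 1$.

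The converse is the main obstacle, and for it I would use the representation by irreducible filters. By Corollary \ref{tfpc2}, an element equals $1$ exactly when it belongs to every $P\in X(H)$, so it suffices to show that $(u\ra c)\ra((v\ra c)\ra c)$ lies in every irreducible filter. Suppose this fails for some irreducible filter. Applying Corollary \ref{tfpc3} twice---first to strip the outer implication, then the inner one---I would produce an irreducible filter $R$ with $u\ra c\in R$, $v\ra c\in R$ and $c\notin R$. Since $R$ is an implicative filter and $c\notin R$, modus ponens forces $u\notin R$ and $v\notin R$.

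Now I invoke the irreducibility of $R$ through Lemma \ref{irred}(2): from $u,v\notin R$ I obtain an element $d\notin R$ with $u\le d$ and $v\le d$. As $1\in R$ always holds, $d\neq 1$, so $d$ is a common upper bound of $u$ and $v$ different from $1$, contradicting $u\vee v = 1$. This establishes $\mathrm{(P)}$ and closes the equivalence. The delicate point throughout is this final step: manufacturing, from a hypothetical failure of $\mathrm{(P)}$, a genuine \emph{second} upper bound of $\{u,v\}$. This is exactly where the order-theoretic content of irreducible filters (Lemma \ref{irred}) is indispensable, since purely equational manipulation does not by itself exhibit such an element.
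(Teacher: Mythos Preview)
The paper does not supply its own proof of Proposition \ref{mp2}; the result is simply attributed to \cite[Theorem 5.1]{A. Monteiro 1996} and stated without argument. Your proof is therefore not a reproduction but a self-contained justification, and it is correct.

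The forward direction is a clean one-line unfolding of $\mathrm{(P)}$. For the converse, the two successive applications of Corollary \ref{tfpc3} correctly peel off the nested implications and land you in an irreducible filter $R$ with $u\to c,\, v\to c\in R$ and $c\notin R$; modus ponens then forces $u,v\notin R$, and Lemma \ref{irred}(2) yields a common upper bound $d\neq 1$ of $u$ and $v$, contradicting $u\vee v=1$. This filter-theoretic route is entirely in the spirit of the paper: it is essentially the same machinery (separation via the corollaries of Theorem \ref{tfp}, irreducibility via Lemma \ref{irred}) that the authors deploy in their proof of Proposition \ref{mp5}, so your argument would sit naturally alongside the surrounding text.
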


The following definition will be used throughout the paper.

\begin{defn}\label{RS}
A poset $(X,\leq)$ is said to be a root system if $[x)$ is a chain
for each $x\in X$.
\end{defn}

The following proposition is \cite[Theorem 4.5]{A. Monteiro 1996}.

\begin{prop} \label{mp4}
Let $H$ be a Hilbert algebra. Then $H$ is prelinear if and only if
$X(H)$ is a root system.
\end{prop}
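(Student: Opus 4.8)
The plan is to use the characterization of prelinearity supplied by Proposition~\ref{mp2}: $H$ is prelinear if and only if $(a\ra b)\vee(b\ra a)=1$ for all $a,b\in H$. The bridge between this join condition and the order structure of $X(H)$ is the observation that every irreducible filter is prime with respect to existing suprema, i.e. if $P\in X(H)$, the supremum $x\vee y$ exists, and $x\vee y\in P$, then $x\in P$ or $y\in P$. I would record this auxiliary fact first, since it drives the forward implication: if $x,y\notin P$, then by condition 2 of Lemma~\ref{irred} there is $c\notin P$ with $x,y\leq c$, so $x\vee y\leq c$, and as $P$ is an upset containing $x\vee y$ this forces $c\in P$, a contradiction.

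For the forward implication I would assume $H$ prelinear, fix $P\in X(H)$, and show that $[P)$ is a chain. Given $Q_1,Q_2\in X(H)$ with $P\subseteq Q_1$ and $P\subseteq Q_2$, suppose toward a contradiction that they are incomparable, so there are $a\in Q_1\setminus Q_2$ and $b\in Q_2\setminus Q_1$. Prelinearity gives $(a\ra b)\vee(b\ra a)=1\in P$, and the join-primeness of $P$ yields $a\ra b\in P$ or $b\ra a\in P$. In the first case $a\ra b\in P\subseteq Q_1$ together with $a\in Q_1$ forces $b\in Q_1$ by the filter condition, contradicting $b\notin Q_1$; the second case is symmetric. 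Hence any two members of $[P)$ are comparable, and $X(H)$ is a root system.

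For the converse I would assume $X(H)$ is a root system, fix $a,b\in H$, and prove that the only upper bound of $\{a\ra b,\,b\ra a\}$ is $1$, which is exactly the assertion $(a\ra b)\vee(b\ra a)=1$. Suppose $c\neq 1$ were a common upper bound. Since $1\nleq c$, Corollary~\ref{tfpc2} (with the roles $1\nleq c$) produces $P\in X(H)$ with $c\notin P$; because $P$ is an upset and $a\ra b\leq c$, $b\ra a\leq c$, both $a\ra b\notin P$ and $b\ra a\notin P$. Applying Corollary~\ref{tfpc3} to $a\ra b\notin P$ gives $Q_1\in X(H)$ with $P\subseteq Q_1$, $a\in Q_1$, $b\notin Q_1$, and applying it to $b\ra a\notin P$ gives $Q_2\in X(H)$ with $P\subseteq Q_2$, $b\in Q_2$, $a\notin Q_2$. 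Then $Q_1,Q_2\in[P)$, which is a chain, so they are comparable; but $Q_1\subseteq Q_2$ contradicts $a\in Q_1$, $a\notin Q_2$, while $Q_2\subseteq Q_1$ contradicts $b\in Q_2$, $b\notin Q_1$.

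The filter-theoretic steps (join-primeness, closure under modus ponens) are routine. The main obstacle, and the conceptual core of the argument, will be reading the equation $(a\ra b)\vee(b\ra a)=1$ correctly as the nonexistence of a proper common upper bound of $a\ra b$ and $b\ra a$: once phrased this way, Corollaries~\ref{tfpc2} and~\ref{tfpc3} manufacture from a putative proper bound two incomparable irreducible filters $Q_1,Q_2$ above a common $P$, which is precisely the configuration a root system forbids, so the two conditions match up exactly.
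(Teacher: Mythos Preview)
The paper does not actually prove Proposition~\ref{mp4}; it simply attributes the result to Monteiro (\cite[Theorem 4.5]{A. Monteiro 1996}) and states it without argument. So there is no ``paper's own proof'' to compare against. That said, your proposal is correct and fits cleanly into the paper's framework.

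A few remarks. Your auxiliary observation that irreducible filters are prime with respect to existing suprema is exactly the content the paper later invokes as \cite[Theorem 3.2]{CHJ} inside the proof of Proposition~\ref{mp5}; your derivation of it from condition~2 of Lemma~\ref{irred} is clean and self-contained. The forward implication is then immediate, and essentially the same chain of ideas appears (in contrapositive form and assuming Proposition~\ref{mp4}) in the paper's proof of $2\Rightarrow 1$ in Proposition~\ref{mp5}. For the converse, your reading of $(a\ra b)\vee(b\ra a)=1$ as ``no proper common upper bound of $a\ra b$ and $b\ra a$'' is the right move, and the use of Corollaries~\ref{tfpc2} and~\ref{tfpc3} to produce two incomparable irreducible filters above a common $P$ is exactly the configuration a root system excludes. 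One cosmetic point: when you invoke Corollary~\ref{tfpc2} with $1\nleq c$, the conclusion ``$1\in P$'' is automatic for any implicative filter, so you may as well phrase it simply as obtaining $P\in X(H)$ with $c\notin P$.

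In short: your argument is sound, uses only results stated earlier in the paper, and is in the spirit of the filter-theoretic manipulations the authors themselves employ in the proof of Proposition~\ref{mp5}.
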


Let $H$ be a Hilbert algebra. It is part of the folklore of
Hilbert algebras that there exists an order isomorphism between
the lattice of congruences of $H$ and the lattice of implicative
filters of $H$. The isomorphism is established via the assignments
$\theta \ra 1/\theta$ and $F \ra \theta_F =\{(a,b)\in H\times
H:a\ra b\in F$ and $b\ra a \in F\}$. We write $H/F$ in place of
$H/\theta_F$, where $H/\theta_F$ is the set of equivalence classes
associated to the congruence $\theta_F$.

\begin{prop} \label{mp5}
Let $H$ be a Hilbert algebra.

The following conditions are equivalent:
\begin{enumerate}[\normalfont 1.]
\item $H$ is prelinear. \item For every $a,b \in H$ and $P\in
X(H)$, $a\ra b \in P$ or $b\ra a \in P$. \item For every $P\in
X(H)$, $A/P$ is a chain. \item For every $a,b,c \in H$ and $P\in
X(H)$, $l(a,b,c) \in P$.
\end{enumerate}
\end{prop}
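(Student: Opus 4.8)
The plan is to prove the cycle of equivalences $1 \Leftrightarrow 2$, $2 \Leftrightarrow 3$, and $2 \Leftrightarrow 4$, leaning heavily on Proposition~\ref{mp4} (prelinearity is equivalent to $X(H)$ being a root system) and on Corollary~\ref{tfpc3}, which converts membership of an implication $a \ra b$ in filters into the language of irreducible filters. Let me sketch each direction.

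For $1 \Leftrightarrow 2$, I would argue through Proposition~\ref{mp4}. Suppose $H$ is prelinear, so $X(H)$ is a root system, and fix $P \in X(H)$ and $a,b \in H$. The idea is that $[P)$ is a chain in $X(H)$, and chains correspond locally to totality of the implication. Concretely, I would suppose for contradiction that $a\ra b \notin P$ and $b \ra a \notin P$; by Corollary~\ref{tfpc3} applied to $a \ra b \notin P$ there is $Q_1 \in X(H)$ with $P \subseteq Q_1$, $a \in Q_1$, $b \notin Q_1$, and applied to $b \ra a \notin P$ there is $Q_2 \in X(H)$ with $P \subseteq Q_2$, $b \in Q_2$, $a \notin Q_2$. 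Then $Q_1, Q_2 \in [P)$, but $Q_1 \nsubseteq Q_2$ (since $a \in Q_1 \setminus Q_2$) and $Q_2 \nsubseteq Q_1$ (since $b \in Q_2 \setminus Q_1$), contradicting that $[P)$ is a chain. Conversely, if condition~2 holds, then for any $P$ and any $Q_1, Q_2 \in [P)$ that are incomparable I would produce witnesses $a \in Q_1 \setminus Q_2$ and $b \in Q_2 \setminus Q_1$; condition~2 forces $a \ra b \in P \subseteq Q_1$ or $b \ra a \in P \subseteq Q_2$, and either alternative (using that filters are upsets closed under modus ponens) forces one of $a,b$ into the wrong filter, a contradiction; hence $[P)$ is a chain and $H$ is prelinear by Proposition~\ref{mp4}.

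For $2 \Leftrightarrow 3$, recall that $H/P$ is built from the congruence $\theta_P$, where $x/P \leq y/P$ in $H/P$ precisely when $x \ra y \in P$. Thus $H/P$ is a chain if and only if for all $a,b$ either $a\ra b \in P$ or $b \ra a \in P$, which is exactly condition~2 read pointwise over $P$; this equivalence is essentially definitional once the order on the quotient is spelled out. For $2 \Leftrightarrow 4$, the link is that $l(a,b,c) = ((a\ra b)\ra c)\ra(((b\ra a)\ra c)\ra c)$ is the Hilbert-algebra term expressing that whenever both $a\ra b$ and $b \ra a$ entail $c$, then $c$ holds; I expect $l(a,b,c) \in P$ for all $c$ to be equivalent, via the irreducibility criterion in Lemma~\ref{irred}, to $a \ra b \in P$ or $b \ra a \in P$. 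The cleanest route is to fix $P$ and show directly that failure of condition~2 at $P$ (i.e. $a \ra b \notin P$ and $b \ra a \notin P$) allows one to choose $c \notin P$ witnessing $l(a,b,c) \notin P$, using part~3 of Lemma~\ref{irred} to find a suitable upper bound, and conversely that condition~2 makes $l(a,b,c)$ collapse to $1 \in P$.

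The main obstacle will be the $2 \Leftrightarrow 4$ equivalence, specifically the careful manipulation of the term $l(a,b,c)$ relative to a fixed irreducible filter $P$. The forward direction is routine: if say $a \ra b \in P$, then since $(a \ra b) \ra c \geq$-related manipulations give $l(a,b,c) = 1$ by the basic Hilbert identities of the preliminary Lemma. The reverse direction is more delicate, because I must extract from $l(a,b,c) \in P$ holding for \emph{all} $c$ the disjunction in condition~2, and this is exactly where the irreducibility of $P$ (in the form of Lemma~\ref{irred}, item~3) is needed to guarantee the existence of the element $c$ that separates $a\ra b$ and $b\ra a$ from $P$. I expect the bookkeeping of which implications land in $P$ to be the only genuinely technical part; the overall logical skeleton is just closing the cycle $1 \Rightarrow 2 \Rightarrow 3 \Rightarrow \cdots$ back to $1$, or equivalently proving $2$ equivalent to each of the other three conditions separately.
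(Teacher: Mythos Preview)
Your proposal is correct and follows essentially the same logical skeleton as the paper's proof. A few minor differences are worth noting: for $1 \Rightarrow 2$ the paper invokes Proposition~\ref{mp2} (the supremum $(a\ra b)\vee(b\ra a)=1$ exists) together with an external result on irreducible filters and joins, whereas you argue via Proposition~\ref{mp4} and Corollary~\ref{tfpc3}, which is arguably more self-contained; and the paper obtains $1 \Rightarrow 4$ simply by citing Proposition~\ref{mp1}, while you go $2 \Rightarrow 4$ via Hilbert-algebra inequalities. On that last point there is a small slip in your write-up: you say condition~2 ``makes $l(a,b,c)$ collapse to $1$'', but what actually holds is only $a\ra b \leq l(a,b,c)$ (and symmetrically $b\ra a \leq l(a,b,c)$), so condition~2 yields $l(a,b,c)\in P$, not $l(a,b,c)=1$; this weaker conclusion is exactly what you need. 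The delicate direction $4 \Rightarrow 2$ (equivalently $4 \Rightarrow 1$ in the paper) is handled identically in both arguments, via item~3 of Lemma~\ref{irred} and two applications of modus ponens.
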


\begin{proof}
In order to show that 1. implies 2., let $H$ be prelinear, $P\in
X(H)$ and $a,b \in H$. By Proposition \ref{mp2} we have that
$(a\ra b)\vee (b\ra a) = 1$. Then we obtain that $(a\ra b)\vee
(b\ra a) \in P$, so by \cite[Theorem 3.2]{CHJ} we deduce that
$a\ra b\in P$ or $b\ra a \in P$. Now we will see that 2. implies
1. Suppose that $H$ is not prelinear, so by Proposition \ref{mp4}
we have that there exist $P, Q\in X(H)$ such that $P\subseteq Q$,
$P\subseteq Z$, $Q\nsubseteq Z$ and $Z\nsubseteq Q$. Thus, there
are $a,b \in H$ such that $a\in Q$, $a\notin Z$, $b\in Z$ and
$b\notin Q$. This implies that $a\ra b\notin P$ and $b\ra a\notin
P$, which is a contradiction.

The equivalence between 2. and 3. is immediate.

The fact that 1. implies 4. follows from Proposition \ref{mp1}.
Now we will prove that 4. implies 1. Suppose that $H$ is not
prelinear. Hence, by Proposition \ref{mp4} there exist $Q,Z\in
X(H)$ such that $P\subseteq Q$, $P\subseteq Z$, $Q\nsubseteq Z$
and $Z\nsubseteq Q$. Hence, there exist $a,b\in H$ such that $a\in
Q$, $a\notin Z$, $b\in Z$ and $b\notin Q$. Thus, $a,b\notin P$.
Notice that $a\ra b\notin P$, otherwise $a\ra b\in Q$ and since
$a\in Q$ then $b\in Q$, which is impossible. Similarly we deduce
that $b\ra a\notin P$. Since $P$ is irreducible then by Lemma
\ref{irred} we have that there exists $c\in A$ such that $(a\ra
b)\ra c\in P$, $(b\ra a)\ra c\in P$ and $c\notin P$. Taking into
account that $l(a,b,c)\in P$, applying modus ponens twice we get
that $c\in P$, which is a contradiction. Thus, we have proved 4.
\end{proof}




We assume that the reader is familiar with the theory of Heyting
algebras \cite{BD}. Prelinear Heyting algebras were considered by
Horn in \cite{Horn} as an intermediate step between the classical
calculus and intuitionistic one and they were studied also by
Monteiro \cite{A. Monteiro 1980}, G. Martínez \cite{Guille} and
others. This is the subvariety of Heyting algebras generated by
the class of totally ordered Heyting algebras and can be
axiomatized by the usual equations for Heyting algebras plus the
prelinearity law $(x\rightarrow y)\vee (y\rightarrow x) = 1$. In
(\cite{BD}, ch. IX) and in \cite{A. Monteiro 1980} there are
characterizations for prelinear Heyting algebras. Horn showed in
\cite{Horn} (although it was in fact proved before by Monteiro,
see \cite{A. Monteiro 1980}) that prelinear Heyting algebras can
be characterized among Heyting algebras in terms of the prime
filters. More precisely, a Heyting algebra $H$ is prelinear if and
only if the poset of prime filters of $H$ with the inclusion as
order is a root system.

Prelinear Heyting algebras, under the name of G\"odel algebras, are a 
particular class of t-norm based algebras of great interest for fuzzy 
logic \cite{Hajek}.

We also assume that the reader is familiar with the theory of
implicative semilattices \cite{N}. It is known that every
implicative element has a largest element with respect to the
order associated to its unserlying semilattice. We denoted by $1$
to this element. A \emph{bounded implicative smilattice} is an
algebra $(H,\we,\ra,0,1)$ of type $(2,2,0,0)$ such that
$(H,\we,\ra)$ is an implicative semilattice and $0$ is the minimum
element of $(H,\we)$. In what follows we establish some
connections between implicative semilattices, prelinear Heyting
algebras and prelinear Hilbert algebras.

The following result is known and can be deduced from the papers
of W.C. Nemitz \cite{N}, A. Monteiro \cite{A. Monteiro 1996} and
T. Katri\v{n}\'{a}k \cite{K}. Here we give a proof for
completeness.

\begin{prop} \label{ip}
Let $(H,\ra,\we,1)$ be an implicative semilattice such that
$(H,\ra,1)$ is a prelinear Hilbert algebra. Then for every $a,b\in
H$ there exist $a\vee b$ and it is given by
\[
a\vee b=((a\ra b)\ra b)\we((b\ra a)\ra a).
\]
Moreover, if $(H,\ra,\we,0,1)$ is a bounded implicative
semilattice then the algebra $(H,\ra,\we,\vee,0,1)$ is a prelinear
Heyting algebra.
\end{prop}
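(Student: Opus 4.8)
The plan is to set $t := ((a\ra b)\ra b)\we((b\ra a)\ra a)$ and show that $t$ is the least upper bound of $\{a,b\}$ in the natural order, which simultaneously establishes existence of $a\vee b$ and the displayed formula. For the upper-bound part, note that $t$ is symmetric in $a,b$ up to commutativity of $\we$, so it suffices to prove $a\le t$, i.e. $a\le (a\ra b)\ra b$ and $a\le (b\ra a)\ra a$. Both are pure Hilbert-algebra identities: using $x\ra(y\ra z)=y\ra(x\ra z)$ we get $a\ra((a\ra b)\ra b)=(a\ra b)\ra(a\ra b)=1$ and $a\ra((b\ra a)\ra a)=(b\ra a)\ra(a\ra a)=(b\ra a)\ra 1=1$. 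Hence $a\le t$ and, by symmetry, $b\le t$.

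The heart of the argument, and the only delicate point, is showing $t\le u$ for every upper bound $u$ of $\{a,b\}$. Here I would work inside the implicative semilattice and use the residuation $c\we x\le y \iff c\le x\ra y$. Since $b\le u$ and $a\le u$, monotonicity of $\ra$ in the second argument gives $(a\ra b)\ra b\le (a\ra b)\ra u$ and $(b\ra a)\ra a\le (b\ra a)\ra u$, so $t\le ((a\ra b)\ra u)\we((b\ra a)\ra u)$. The key lemma is the identity $(x\ra u)\we(y\ra u)=(x\vee y)\ra u$, valid in any implicative semilattice whenever $x\vee y$ exists; I would prove it directly from residuation and the universal property of the supremum, without appealing to distributivity. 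Applying it with $x=a\ra b$, $y=b\ra a$, together with prelinearity in the form $(a\ra b)\vee(b\ra a)=1$ from Proposition \ref{mp2}, yields $((a\ra b)\ra u)\we((b\ra a)\ra u)=1\ra u=u$, so $t\le u$ and $t=a\vee b$. Alternatively, minimality can be obtained from the dual-filter representation: if $t\nleq u$, Corollary \ref{tfpc2} produces $P\in X(H)$ with $t\in P$ and $u\notin P$, forcing $a,b\notin P$; prelinearity via Proposition \ref{mp5} gives $a\ra b\in P$ or $b\ra a\in P$, and in either case modus ponens inside $P$ (using $t\le(a\ra b)\ra b$, resp. $t\le(b\ra a)\ra a$) contradicts $a,b\notin P$.

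For the second assertion I would simply assemble the Heyting structure from the pieces already in place. Part one shows that binary suprema always exist, so $(H,\we,\vee)$ is a lattice, bounded below by $0$ and above by $1$. The implicative-semilattice residuation $c\we a\le b\iff c\le a\ra b$ is exactly the Heyting condition on $\ra$, so $(H,\ra,\we,\vee,0,1)$ is a Heyting algebra (distributivity being automatic for a bounded lattice with a residuated meet). Finally, the join computed in part one is the supremum in the natural order, so Proposition \ref{mp2} gives $(a\ra b)\vee(b\ra a)=1$, which is precisely the prelinearity law; hence the resulting Heyting algebra is prelinear. Everything in this last paragraph is bookkeeping, so I expect the semilattice identity in the middle paragraph to be the main obstacle.
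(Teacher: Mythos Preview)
Your proposal is correct. The overall architecture---show $t$ is an upper bound, then prove $t\le u$ by reducing to $(a\ra b)\vee(b\ra a)=1$---matches the paper, but the execution differs in an instructive way.

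For the step $t\le(a\ra b)\ra u$ (and its symmetric counterpart), the paper argues by contradiction using irreducible filters: assuming $c\nleq(a\ra b)\ra z$, it invokes Corollary~\ref{tfpc2} to obtain $P\in X(H)$ with $c\in P$ and $(a\ra b)\ra z\notin P$, then Corollary~\ref{tfpc3} to find $Q\supseteq P$ with $a\ra b\in Q$ and $z\notin Q$, and derives a contradiction by modus ponens. Your primary route replaces this entirely by the one-line monotonicity observation $t\le(a\ra b)\ra b\le(a\ra b)\ra u$, which is strictly simpler. The ``key lemma'' $(x\ra u)\we(y\ra u)=(x\vee y)\ra u$ is, as you anticipated, immediate from residuation and the universal property of the join: $c\le(x\vee y)\ra u$ iff $x\vee y\le c\ra u$ iff $x\le c\ra u$ and $y\le c\ra u$ iff $c\le(x\ra u)\we(y\ra u)$. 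So your equational argument is fully self-contained and avoids the representation theory altogether.

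Your alternative filter route is close in spirit to the paper's but more economical: you use a single separating filter $P$ and Proposition~\ref{mp5} to get $a\ra b\in P$ or $b\ra a\in P$ directly, whereas the paper passes to a larger filter $Q$ via Corollary~\ref{tfpc3}. Either way the contradiction is the same modus ponens. The final paragraph on assembling the Heyting structure agrees with the paper's appeal to Proposition~\ref{mp2}, with your version spelling out the bookkeeping a bit more explicitly.
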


\begin{proof}
Let $a,b\in H$. By Proposition \ref{mp4} there exists the supremum
of the set $\{a\ra b,b\ra a\}$. Moreover, $(a\ra b)\vee(b\ra
a)=1$. Consider the element $c=((a\ra b)\ra b)\wedge((b\ra a)\ra
a)$. It is clear that $a\leq c$ and $b\leq c$. So, $c$ is an upper
bound of $\{a, b\}$. Let $z\in H$ such that $a\leq z$ and $b\leq
z$. We will prove that $c\leq(a\ra b)\ra z$ and $c\leq(b\ra a)\ra
z$. Suppose that $c\nleq(a\ra b)\ra z$ or $c\nleq(b\ra a)\ra z$.
In the first case, it follows from Corollary \ref{tfpc2} that
there exists $P\in X(H)$ such that $c\in P$ and $(a\ra b)\ra
z\notin P$. So, by Corollary \ref{tfpc3} there exists $Q\in X(H)$
such that $a\ra b\in Q$, $z\notin Q$ and $P\subseteq Q$. Since
$c\in P$ then $(a\ra b)\ra b\in P\subseteq Q$. By modus ponens we
have that $b\in Q$, and since $b\leq z$ then we get $z\in Q$,
which is a contradiction. The other case is similar. Thus,
$c\leq(a\ra b)\ra z$ and $c\leq(b\ra a)\ra z$, i.e., $a\ra b\leq
c\ra z$ and $b\ra a\leq c\ra z$. Then, $(a\ra b)\vee(b\ra a)=1\leq
c\ra z$. Therefore, $c\leq z$. Therefore, there exist $a\vee b$
and it is given by $a\vee b=((a\ra b)\ra b)\we((b\ra a)\ra a)$.

The rest of the proof follows from Proposition \ref{mp2}.
\end{proof}

\section{An adjunction}\label{s3}

Let $\Hil$ be the algebraic category of Hilbert algebras, $\PHil$
the algebraic category of prelinear Hilbert algebras and $\PHilz$
the algebraic category of bounded prelinear Hilbert algebras. In
this section we give an explicit description of the left adjoint
of the forgetful functor from $\PHey$ to $\PHilz$.

We start with some preliminary definitions and properties
involving the duality developed in \cite{CCM} for the algebraic
category of Hilbert algebras (see also \cite{CMs}). \vspace{3pt}

If $f:H\ra G$ is a function between Hilbert algebras, we define
the relation $R_f \subseteq X(G)\times X(H)$ by
\[
(P,Q)\in R_f\;\text{if and only if}\; f^{-1}(P) \subseteq Q.
\]

If $H$ is a Hilbert algebra and $a\in H$ we define
\begin{equation} \label{var}
\varphi_{H}(a): = \{P\in X(H): a\in P\}.
\end{equation}

If there is not ambiguity we write $\varphi$ in place of
$\varphi_H$. If $X$ is a set then we define $Y^{c}:= \{x\in X:
x\notin Y\}$.

\begin{rem} \label{HH}
Let $(X,\leq)$ be a poset. Write $X^{+}$ for the set of upsets of
$(X,\leq)$. Define on $X^{+}$ the binary operation $\Ra$ by
\begin{equation}\label{imp}
U\Rightarrow V: =(U\cap V^{c}]^{c}.
\end{equation}
Then $(X^{+},\cap,\cup,\Ra,\emptyset,X)$ is a complete Heyting
algebra. If there is not ambiguity, we also write $X^{+}$ for this
Heyting algebra. In particular, $X^{+}$ can be seen as a Hilbert
algebra.
\end{rem}

Let $(X,\tau)$ be a topological space. An arbitrary non-empty
subset $Y$ of $X$ is said to be \emph{irreducible} if for any
closed subsets $Z$ and $W$ such that $Y\subseteq Z \cup W$ we have
that $Y \subseteq Z$ or $Y \subseteq W$. We say that $(X,\tau)$ is
\emph{sober} if for every irreducible closed set $Y$ there exists
a unique $x\in X$ such that $Y = \clx$, where $\clx$ denotes the
clausure of $\{x\}$. A subset of $X$ is \emph{saturated} if it is
an intersection of open sets. The \emph{saturation} of a subset
$Y$ of $X$ is defined as $\sat(Y):= \bigcap\{U\in \tau: Y\subseteq
U\}$. Also recall that the \emph{specialization order} of
$(X,\tau)$ is defined by $x\leq y$ if and only if $x \in \cly$.
The relation $\leq$ is reflexive and transitive, i.e., a
quasi-order. The relation $\leq$ is a partial order if $(X,\tau)$
is $T_0$. The dual quasi-order of $\leq$ will be denoted by $\pd$.
Hence,
\[
x\pd y\;\text{if and only if}\; y\in \clx.
\]

Let $(X,\tau)$ be a topological space which is $T_0$, and consider
the order $\pd$. Let $x\in X$ and $Y\subseteq X$. Then $\clx =
[x)$ and $\sat(Y) = (Y]$.

\begin{defn} \label{Hs}
A Hilbert space, or $H$-space for short, is a structure
$(X,\tau,\KA)$ where $(X,\tau)$ is a topological space, $\KA$ is a
family of subsets of $X$ and the following conditions are
satisfied:
\begin{enumerate} [\normalfont (H1)]
\item $\KA$ is a base of open and compact subsets for the topology
$\tau$ on $X$. \item For every $U, V \in \KA$, $\sat(U \cap V^{c})
\in \KA$. \item $(X, \tau)$ is sober.
\end{enumerate}
\end{defn}

In what follows, if $(X,\tau,\KA)$ is an $H$-space we simply write
$(X,\KA)$.

\begin{rem} \label{rimp}
\begin{enumerate} [\normalfont 1.]
\item A sober topological space is $T_0$. \item Viewing any
topological space as a poset, with the order $\pd$, condition
$\mathrm{(H2)}$ of Definition \ref{Hs} can be rewritten as: for
every $U,V\in \KA$, $(U\cap V^{c}] \in \KA$.
\end{enumerate}
\end{rem}

If $X, Y$ are sets and $R\subseteq X\times Y$, take $R(x): =\{y\in
Y:(x,y)\in R\}$, and if $U\subseteq Y$, $R^{-1}(U): =\{x\in X:
R(x)\cap U \neq \emptyset\}$.

\begin{defn} \label{defHR}
Let $\X_1 = (X_1,\KA_1)$ and $\X_2 = (X_2,\KA_2)$ be two
$H$-spaces. Let us consider a relation $R\subseteq X_1 \times
X_2$. We say that $R$ is an $H$-relation from $\X_1$ into $\X_2$
if it satisfies the following properties:
\begin{enumerate} [\normalfont (HR1)]
\item $R^{-1}(U) \in \KA_1$, for every $U \in \KA_2$. \item $R(x)$
is a closed subset of $\X_2$, for all $x\in X_1$.
\end{enumerate}
We say that $R$ is an $H$-functional relation if it satisfies the
following additional condition:
\begin{enumerate}
\item[$\mathrm{(HF)}$] If $(x, y)\in R$ then there is $z\in X_1$
such that $z\in \clx$ and $R(z) = \cly$.
\end{enumerate}
\end{defn}

\begin{rem}
Condition $\mathrm{(HF)}$ from Definition \ref{defHR} can also be
given as follows: if $(x, y)\in R$ then there exists $z\in X_1$
such that $x\pd z$ and $R(z) = [y)$.
\end{rem}

If $H$ is a Hilbert algebra then $\X(H) = (X(H), \KA_H)$ is an
$H$-space, where $\KA_H: =\{\varphi(a)^{c}: a\in H\}$. If $f$ is a
morphism in $\Hil$ then $R_f$ is an $H$-functional relation. Write
$\HS$ for the category whose objects are Hilbert spaces and whose
morphisms are $H$-functional relations. The assignment $H\mapsto
\X(H)$ can be extended to a functor $\X:\Hil \ra \HS$.

\begin{rem} \label{dso}
If $H\in \Hil$ and $P,Q\in \X(H)$, then $P\subseteq Q$ if and only
if $P\leq_{d} Q$.
\end{rem}

Let $(X,\KA)$ be an $H$-space. Define $D(X): =\{U\subseteq
X:U^{c}\in \KA\}$. Then $D(X) \subseteq X^{+}$. It follows from
Definition \ref{Hs} and Remark \ref{rimp} that $D(X)$ is closed
under the operation $\Ra$ given in (\ref{imp}) of Remark \ref{HH}.
Since $X^{+}$ is a Heyting algebra then $\D(X) = (D(X),
\Rightarrow, X)$ is a Hilbert algebra. If $R$ is an $H$-functional
relation from $(X_1, \KA_1)$ into $(X_2, \KA_2)$, then the map
$h_R: \D(X_2) \ra \D(X_1)$ given by $h_R(U) = \{x\in
X_1:R(x)\subseteq U\}$ is a morphism in $\Hil$. The assignment
$X\mapsto \D(X)$ can be extended to a functor $\D:\HS\ra \Hil$.
Finally, if $H\in \Hil$ then the map $\varphi: H \ra \D(\X(H))$
defined as in $(\ref{var})$ is an isomorphism in $\Hil$.

Recall that if $(X,\tau,\KA)$ is an $H$-space then $(X,\pd)$ is a
poset, where $\pd$ is the dual of the specialization order
associated to the topological space $(X,\tau)$. Also recall that
if $H$ is a Hilbert algebra then the dual of the specialization
order associated to $\X(H)$ is the inclusion (Remark \ref{dso}).
We say that there is an order isomorphism between two $H$-spaces
if and only if there is an order isomorphism between its
associated posets obtained through the dual of the specialization
order.

If $(X,\tau), (Y,\sigma)$ be topological spaces and $f: X\ra Y$a
function. Then we define the binary relation $f^{R}$ by
\[
(x,y)\in f^{R}\;\text{if and only if}\; f(x)\pd y.
\]
If $(X,\KA)$ is an $H$-space, then the map $\epsilon_{X}: X\ra
\X(\D(X))$ given by $\epsilon_{X}(x) =\{U\in D(X): x\in U\}$ is an
order isomorphism and a homeomorphism between the topological
spaces $X$ and $\X(\D(X))$(\cite[Theorem 2.2]{CCM}). If there is
not ambiguity we will write $\epsilon$ in place of $\epsilon_{X}$.
Moreover, the relation $\epsilon^{R}\subseteq X\times X(D(X))$ is
given by $(x,P)\in \epsilon^{R}$ if and only if
$\epsilon(x)\subseteq P$. Moreover, $\epsilon^{R}$ is an
$H$-functional relation which is an isomorphism in $\HS$.

\begin{thm}  \label{rh}
The contravariant functors $\X$ and $\D$ define a dual equivalence
between $\Hil$ and $\HS$ with natural equivalences $\epsilon^{R}$
and $\varphi$.
\end{thm}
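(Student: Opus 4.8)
The plan is to establish the three ingredients of a contravariant equivalence: that $\X$ and $\D$ are contravariant functors, that each component $\varphi_H$ and $\epsilon_X^R$ is an isomorphism, and that $\varphi$ and $\epsilon^R$ are natural. The first two are already in hand from the discussion preceding the statement: the assignments $H\mapsto \X(H)$ and $(X,\KA)\mapsto \D(X)$ were extended to functors, $\varphi_H\colon H\ra \D(\X(H))$ is an isomorphism in $\Hil$, and $\epsilon_X^R$ is an isomorphism in $\HS$ (these last two essentially from \cite{CCM}). So the whole content reduces to checking the two naturality squares, after which the standard characterization of dual equivalences via pointwise-invertible natural transformations yields the result.

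First I would check the naturality of $\varphi$. Let $f\colon H\ra G$ be a morphism in $\Hil$. Since $\X(f)=R_f$ and $\D(R_f)=h_{R_f}$, I must show $h_{R_f}\circ \varphi_H=\varphi_G\circ f$ as maps $H\ra \D(\X(G))$. Fixing $a\in H$, we have $\varphi_G(f(a))=\{P\in X(G): f(a)\in P\}$ and $h_{R_f}(\varphi_H(a))=\{P\in X(G): R_f(P)\subseteq \varphi_H(a)\}$, where $R_f(P)=\{Q\in X(H): f^{-1}(P)\subseteq Q\}$. Hence it suffices to prove, for each $P\in X(G)$, the equivalence
\[
R_f(P)\subseteq \varphi_H(a)\iff f(a)\in P.
\]
For the direction from right to left, if $a\in f^{-1}(P)$ then every $Q\supseteq f^{-1}(P)$ contains $a$, so $R_f(P)\subseteq\varphi_H(a)$. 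For the converse I argue contrapositively: if $a\notin f^{-1}(P)$, then since $f^{-1}(P)$ is an implicative filter (preimages of filters under Hilbert morphisms are filters), hence an upset, we get $(a]\cap f^{-1}(P)=\emptyset$; as $(a]$ is an order ideal, Theorem \ref{tfp} produces $Q\in X(H)$ with $f^{-1}(P)\subseteq Q$ and $a\notin Q$, so $Q\in R_f(P)\setminus\varphi_H(a)$. This establishes the equivalence and hence the square.

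Next I would treat the naturality of $\epsilon^R$. Given an $H$-functional relation $R$ from $\X_1=(X_1,\KA_1)$ to $\X_2=(X_2,\KA_2)$, with $\X(\D(R))=R_{h_R}$, I must verify the equality of $H$-relations $\epsilon_{X_2}^R\circ R=R_{h_R}\circ \epsilon_{X_1}^R$ from $X_1$ to $X(\D(X_2))$. Unwinding the composition of $\HS$ and the definitions, a pair $(x,P)$ lies in the left-hand side iff there is $y\in X_2$ with $(x,y)\in R$ and $\epsilon_{X_2}(y)\subseteq P$, while it lies in the right-hand side iff there is $P_1\in X(\D(X_1))$ with $\epsilon_{X_1}(x)\subseteq P_1$ and $h_R^{-1}(P_1)\subseteq P$. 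The key is that $\epsilon_{X_1}$ is an order isomorphism and a homeomorphism \cite{CCM}, so $\epsilon_{X_1}(x)$ is the canonical witness in $X(\D(X_1))$; the defining condition of $h_R$ together with $D(X_i)=\{U:U^{c}\in\KA_i\}$ then lets one pass between a witness $y\in R(x)$ in $X_2$ and the inclusion $h_R^{-1}(\epsilon_{X_1}(x))\subseteq P$. I expect this to be the main obstacle, as it requires careful bookkeeping of the composition law in $\HS$ and the translation of points through the isomorphisms $\epsilon_{X_i}$, whereas the computation for $\varphi$ is comparatively direct. With both naturality squares verified and all components isomorphisms, $\varphi\colon \mathrm{Id}_{\Hil}\Rightarrow \D\X$ and $\epsilon^R\colon \mathrm{Id}_{\HS}\Rightarrow \X\D$ are natural isomorphisms, so $\X$ and $\D$ define a dual equivalence, as claimed.
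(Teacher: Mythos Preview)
The paper does not give a proof of this theorem at all: it is stated as a summary of the duality established in \cite{CCM}, with the surrounding discussion recalling the definitions of $\X$, $\D$, $\varphi$ and $\epsilon^{R}$ from that reference. So there is no ``paper's own proof'' to compare against; the result is simply quoted.

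Your proposal is a correct outline of how one actually proves such a dual equivalence, and the naturality computation you give for $\varphi$ is exactly the content of \cite[Lemma~3.3]{CCM}, which the paper itself invokes later (see equation~(\ref{cd})). Your use of Theorem~\ref{tfp} to produce the separating irreducible filter is the right tool there. The part you flag as the main obstacle---naturality of $\epsilon^{R}$---is indeed only sketched in your proposal; the bookkeeping with relational composition in $\HS$ and the transport through the homeomorphism $\epsilon_{X}$ is precisely what \cite{CCM} carries out, and your identification of what must be shown is accurate. In short: your approach is sound and is essentially the argument of \cite{CCM}, which the paper is citing rather than reproducing.
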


If $H$ is a bounded Hilbert algebra then $\varphi(0) = \emptyset$,
so $\X(H)\in \KA_{H}$. Conversely, if $(X,\KA)$ is an $H$-space
such that $X\in \KA$ then $\D(X)$ is a bounded Hilbert algebra. If
$H$, $G$ are bounded Hilbert algebras and $f:H\ra G$ is a morphism
of Hilbert algebras such that $f(0) = 0$ then $R_{f}(P)\neq
\emptyset$ for every $P\in \X(G)$. Conversely, if $(X_1, \KA_1)$
and $(X_2, \KA_2)$ are $H$-spaces such that $X_1\in \KA_1$,
$X_2\in \KA_2$ and $R$ is an $H$-functional relation from $(X_1,
\KA_1)$ into $(X_2, \KA_2)$ such that $R(x) \neq \emptyset$ for
every $x\in X_1$, then $h_R: \D(X_2) \ra \D(X_1)$ satisfies that
$h_{R}(\emptyset) = \emptyset$. Moreover, if $(X, \KA)$ is an
$H$-space such that $X\in \KA$ then $\epsilon(x) \neq \emptyset$
for every $x\in X$.

Let $\PHSz$ be the full subcategory of $\HS$ whose objects
$(X,\KA)$ are such that $(X,\leq_{d})$ is a root system and $X\in
\KA$, and whose morphisms are $H$-functional relations $R$ such
that $R(x) \neq \emptyset$ for every $x$. Straightforward
computations based in Theorem \ref{rh} and Proposition \ref{mp4}
proves the following result.

\begin{cor} \label{corrh}
There exists a categorical equivalence between $\PHilz$ and
$\PHSz$.
\end{cor}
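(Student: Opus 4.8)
The plan is to obtain the desired equivalence simply by restricting the dual equivalence established in Theorem \ref{rh} to the full subcategories $\PHilz$ and $\PHSz$. Concretely, I would verify three things: that the functor $\X$ carries the objects and morphisms of $\PHilz$ into $\PHSz$; that the functor $\D$ carries the objects and morphisms of $\PHSz$ into $\PHilz$; and that the natural isomorphisms $\varphi$ and $\epsilon^{R}$ of Theorem \ref{rh} still have all their components inside these subcategories, so that the two restricted functors remain quasi-inverse to one another.

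First I would check that $\X$ restricts. Let $H\in\PHilz$. Since $H$ is bounded, the dictionary recorded in the paragraph following Theorem \ref{rh} gives $X(H)\in\KA_{H}$. Since $H$ is prelinear, Proposition \ref{mp4} tells us that $X(H)$ is a root system; by Remark \ref{dso} the inclusion order on $X(H)$ coincides with $\pd$, so $(X(H),\pd)$ is a root system. Hence $\X(H)\in\PHSz$. For a morphism $f\colon H\to G$ in $\PHilz$ we have $f(0)=0$, and by the same remarks this forces $R_{f}(P)\neq\emptyset$ for every $P\in X(G)$, so $R_{f}$ is a morphism of $\PHSz$.

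Next I would check that $\D$ restricts, which is the crux of the argument. Let $(X,\KA)\in\PHSz$. Since $X\in\KA$, the dual Hilbert algebra $\D(X)$ is bounded. It remains to see that $\D(X)$ is prelinear, and here I would invoke Proposition \ref{mp4} in the form: $\D(X)$ is prelinear if and only if $X(\D(X))$ is a root system. The map $\epsilon_{X}\colon X\to\X(\D(X))$ is an order isomorphism for the orders $\pd$, and by Remark \ref{dso} the order $\pd$ on $X(\D(X))$ is inclusion, so $X(\D(X))$ (ordered by inclusion) is order isomorphic to $(X,\pd)$, which is a root system by hypothesis. As being a root system is an order-isomorphism invariant, $X(\D(X))$ is a root system and therefore $\D(X)$ is prelinear; thus $\D(X)\in\PHilz$. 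For a morphism $R$ of $\PHSz$, the condition $R(x)\neq\emptyset$ yields $h_{R}(\emptyset)=\emptyset$ by the remarks after Theorem \ref{rh}, so $h_{R}$ preserves $0$ and is a morphism of $\PHilz$.

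Finally, since $\PHilz$ and $\PHSz$ are \emph{full} subcategories and the two restricted functors land in them, the components of $\varphi$ (at objects $H\in\PHilz$) and of $\epsilon^{R}$ (at objects $X\in\PHSz$) are automatically morphisms of the respective subcategories, and naturality is inherited from Theorem \ref{rh}. This yields the asserted equivalence (a dual equivalence, since $\X$ and $\D$ are contravariant) between $\PHilz$ and $\PHSz$. I expect the only genuinely delicate point to be the prelinearity of $\D(X)$, that is, transporting the root-system condition from $(X,\pd)$ to $X(\D(X))$ through $\epsilon_{X}$ and reading it off via Proposition \ref{mp4}; everything else is a routine consequence of Theorem \ref{rh} together with the boundedness correspondence stated just before the result.
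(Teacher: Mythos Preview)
Your proposal is correct and follows exactly the approach the paper intends: the paper's own proof is the single sentence ``Straightforward computations based in Theorem \ref{rh} and Proposition \ref{mp4} proves the following result,'' and you have spelled out precisely those computations (restricting $\X$ and $\D$ via the boundedness dictionary and Proposition \ref{mp4}, and transporting the root-system condition through $\epsilon_X$). One small wording caveat: neither $\PHilz\subseteq\Hil$ nor $\PHSz\subseteq\HS$ is literally a \emph{full} subcategory (morphisms must preserve $0$, respectively satisfy $R(x)\neq\emptyset$), so your final paragraph should appeal instead to the explicit facts $\varphi(0)=\emptyset$ and $\epsilon(x)\neq\emptyset$ recorded just before the corollary rather than to fullness; with that adjustment the argument is complete.
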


\subsection*{The adjunction}

Let $\PHey$ be the algebraic category of prelinear Heyting
algebras.

\begin{lem} \label{bpteo}
Let $H \in \PHil$. Then $\X(H)^{+}\in \PHey$.
\end{lem}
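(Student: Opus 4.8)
The plan is to lean on Remark \ref{HH}: for any poset $(X,\pd)$ the structure $X^{+}$ of upsets, equipped with $\cap,\cup$, the implication $\Ra$ of (\ref{imp}), $\emptyset$ and $X$, is already a complete Heyting algebra. Hence for $\X(H)^{+}$ there is nothing to check about the Heyting axioms themselves, and the entire content of the lemma reduces to verifying the single prelinearity law $(U\Ra V)\cup(V\Ra U)=X(H)$ for all upsets $U,V$ of $(X(H),\pd)$, since this is precisely the extra equation that cuts out $\PHey$ inside $\Hey$.

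First I would rewrite the Heyting implication (\ref{imp}) in purely order-theoretic terms. Unwinding $U\Ra V=(U\cap V^{c}]^{c}$ against the order $\pd$, where $(\,\cdot\,]$ denotes the downset operator, a point $P$ fails to lie in $U\Ra V$ exactly when some $Q$ with $P\pd Q$ satisfies $Q\in U$ and $Q\notin V$. Taking complements, this gives the familiar description $P\in U\Ra V$ if and only if $[P)\cap U\subseteq V$. In particular the prelinearity law $(U\Ra V)\cup(V\Ra U)=X(H)$ becomes the condition that for every $P\in X(H)$ one has $[P)\cap U\subseteq V$ or $[P)\cap V\subseteq U$.

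The key geometric input is then supplied by the hypothesis $H\in\PHil$ together with Proposition \ref{mp4}: $X(H)$ is a root system, so $[P)$ is a chain for every $P$ (recall from Remark \ref{dso} that $\pd$ on $X(H)$ is just inclusion of filters). Fixing $P$ and upsets $U,V$, both $[P)\cap U$ and $[P)\cap V$ are upsets of the chain $[P)$, and on a chain any two upsets are comparable under inclusion. Thus, without loss of generality, $[P)\cap U\subseteq[P)\cap V\subseteq V$, whence $P\in U\Ra V$. Since $P$ was arbitrary, every point of $X(H)$ lies in $U\Ra V$ or in $V\Ra U$, which is exactly the desired identity $(U\Ra V)\cup(V\Ra U)=X(H)$.

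I expect no serious obstacle here; the argument is essentially a translation followed by the one-line chain observation. The only points demanding care are the correct reading of (\ref{imp}) under the \emph{dual} specialization order $\pd$, keeping straight which of $[\,\cdot\,)$ and $(\,\cdot\,]$ is the up- and which the down-operator, and the small verification that the restriction $[P)\cap U$ of an upset $U$ is again an upset inside the sub-chain $[P)$, so that the comparability of upsets of a chain may legitimately be invoked.
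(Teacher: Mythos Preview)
Your proposal is correct and follows essentially the same approach as the paper: both reduce to verifying prelinearity and invoke Proposition~\ref{mp4} to conclude that $[P)$ is a chain, then obtain the conclusion from comparability. The only difference is presentational---the paper argues by contradiction, picking witnesses $Q\in U\setminus V$ and $Z\in V\setminus U$ above $P$ and comparing them, whereas you phrase the same comparison as the observation that any two upsets of a chain are comparable under inclusion.
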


\begin{proof}
Let $U,V \in \X(H)^{+}$. We need to show that $(U\Ra V) \cup (V\Ra
U) = \X(H)$, or, equivalently, that $(U\cap V^c] \cap (V\cap U^c]
= \emptyset$. Suppose that there is $P \in (U\cap V^c] \cap (V\cap
U^c]$. Thus, there exist $Q, Z\in \X(H)$ such that $P\subseteq Q$,
$P\subseteq Z$, $Q\in U\cap V^c$ and $Z\in V\cap U^c$. By
Proposition \ref{mp4} we have that $\X(H)$ is a root system, so
$Q\subseteq Z$ or $Z\subseteq Q$. Assume that $Q\subseteq Z$.
Since $Q\in U$ then $Z\in U$, which is a contradiction because
$Q\in U^c$. Analogously, we obtain a contradiction assuming that
$Z\subseteq Q$. Therefore, $\X(H)^{+}$ is a prelinear Heyting
algebra.
\end{proof}

If $S$ is a subset of a Heyting algebra $H$, write $\langle S
\rangle_{\Hey}$ for the Heyting subalgebra of $H$ generated by
$S$. Let $H\in \Hil$. Since $\varphi(H)$ is a subset of the
Heyting algebra $\X(H)^+$ then we define the following Heyting
subalgebra of $X(H)^+$:
\[
H^*: = \langle \varphi(H) \rangle_{\Hey}.
\]

\begin{cor}\label{bpteo2}
If $H\in \PHil$ then $H^*\in \PHey$.
\end{cor}

\begin{proof}
It follows from Lemma \ref{bpteo} and the fact that the class of
prelinear Heyting algebras is a variety.
\end{proof}

In what follows we study the link between morphisms in $\PHilz$
and morphisms in $\PHey$.

\begin{lem}\label{aux}
Let $f:H\ra G$ be a morphism in $\PHilz$ and $P\in \X(G)$. Then
$f^{-1}(P)\in \X(H)$.
\end{lem}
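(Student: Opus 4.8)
The plan is to verify the three defining properties of membership in $\X(H)$: that $f^{-1}(P)$ is an implicative filter, that it is proper, and that it is irreducible. The first two are routine consequences of $f$ being a homomorphism of bounded Hilbert algebras; the substance of the lemma lies in the irreducibility, and there I would exploit prelinearity in an essential way.

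For the filter property, since $f(1)=1$ and $1\in P$ we get $1\in f^{-1}(P)$; and if $a\in f^{-1}(P)$ and $a\ra b\in f^{-1}(P)$, then $f(a)\in P$ and $f(a)\ra f(b)=f(a\ra b)\in P$, so $f(b)\in P$ because $P$ is a filter, giving $b\in f^{-1}(P)$. For properness I would use that $f$ is a morphism in $\PHilz$ and hence preserves $0$: since $P$ is proper and $G$ is bounded, $0\notin P$, whence $f(0)=0\notin P$ and so $f^{-1}(P)\neq H$.

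For irreducibility I would verify condition 3 of Lemma \ref{irred}. The naive idea, namely to pull back to $H$ a witness $c\notin P$ produced by irreducibility of $P$ in $G$, fails, because such a $c$ need not lie in the image of $f$. The fix is that prelinearity lets me take the witness from among the given elements themselves. Precisely, let $a,b\in H$ with $a,b\notin f^{-1}(P)$, so $f(a),f(b)\notin P$. Since $G$ is prelinear and $P\in \X(G)$, Proposition \ref{mp5} gives $f(a)\ra f(b)\in P$ or $f(b)\ra f(a)\in P$, that is, $a\ra b\in f^{-1}(P)$ or $b\ra a\in f^{-1}(P)$. In the first case I would set $c=b$: then $c\notin f^{-1}(P)$, while $a\ra c=a\ra b\in f^{-1}(P)$ and $b\ra c=b\ra b=1\in f^{-1}(P)$. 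The second case is symmetric with $c=a$. This yields the required witness and establishes that $f^{-1}(P)$ is irreducible.

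The only delicate point is this last step, and its resolution is exactly the observation that in a prelinear algebra one may always take the witness $c$ to be one of $a,b$, sidestepping the image-of-$f$ obstruction. Everything else is bookkeeping with the homomorphism identities and the preservation of the constants $1$ and $0$.
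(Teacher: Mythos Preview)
Your proof is correct and rests on the same key observation as the paper: prelinearity of $G$ forces $a\ra b\in f^{-1}(P)$ or $b\ra a\in f^{-1}(P)$ whenever $a,b\notin f^{-1}(P)$. The only difference is cosmetic---you verify condition~3 of Lemma~\ref{irred} by taking $c\in\{a,b\}$ as the witness, whereas the paper checks the original definition of irreducibility directly, assuming $f^{-1}(P)=F_1\cap F_2$ with neither $F_i$ contained in $f^{-1}(P)$ and using the same dichotomy to push (say) $b$ into $F_1\cap F_2$ for a contradiction.
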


\begin{proof}
Let $a, b\in G$. By Proposition \ref{mp5} we have that $f(a)\ra
f(b)\in P$ or $f(b) \ra f(a) \in P$. Since $f(a\ra b) = f(a) \ra
f(b)$ and $f(b\ra a) = f(b) \ra f(a)$ then $a\ra b \in f^{-1}(P)$
or $b\ra a \in f^{-1}(P)$. This property will be used in order to
show that $f^{-1}(P)\in \X(H)$.

First note that since $0\notin P$ then $f^{-1}(P)$ is a proper
implicative filter. Now we will prove that $f^{-1}(P)$ is
irreducible. Let $f^{-1}(P) = F_1\cap F_2$ with $F_1,F_2\in
\X(H)$. Suppose that $F_1\nsubseteq f^{-1}(P)$ and $F_2 \nsubseteq
f^{-1}(P)$. Then there exist $a,b\in H$ such that $a\in F_1,
f(a)\notin P$, $b\in F_2$ and $f(b)\notin P$. Since $a\ra b\in
F_1$ and $a\in F_1$ then $b\in F_1$, so $b\in F_1 \cap F_2 =
f^{-1}(P)$, i.e., $f(b) \in P$, which is a contradiction. Thus,
$f^{-1}(P) = F_1$ or $f^{-1}(P) = F_2$. Therefore, $f^{-1}(P) \in
\X(H)$.
\end{proof}

Let $f:H\ra G$ be a morphism in $\PHilz$. Taking into account the
commutativity of the the following diagram
$$
 \xymatrix{
   H \ar[rr]^{\varphi} \ar[d]_{f} & & \D(\X(H))  \ar[d]^{\D(\X(f))}\\
   G \ar[rr]^{\varphi} & &  \D(\X(G)) \
   }
$$
we obtain that $\varphi(f(a)) = \D(\X(f))(\varphi(a))$. Besides,
in \cite[Lemma 3.3]{CCM} it was proved that if $P\in \X(H)$ then
$f(a)\in P$ if and only if for all $Q\in \X(G)$, if $(P,Q) \in
R_f$ then $a\in Q$, i.e., $f(a) \in P$ if and only if $R_f(P)
\subseteq \varphi(a)$. Thus, for every $a\in H$ we have that
\begin{equation} \label{cd}
\D(\X(f))(\varphi(a)) = \{P\in \X(G): R_{f}(P)\subseteq
\varphi(a)\}.
\end{equation}

\begin{cor} \label{ext}
Let $f:H\ra G$ be a morphism in $\PHilz$ and $g = \D(\X(f))$. The
morphism $g$ in $\PHilz$ can be extended to the homomorphism of
Heyting algebras $\hat{g}: \X(H)^{+} \ra \X(G)^{+}$ given by
\[
\hat{g}(U) = \{P\in \X(G): R_{f}(P)\subseteq U\}.
\]
\end{cor}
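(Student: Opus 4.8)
The plan is to recognize that the relation-defined map $\hat{g}$ is secretly an ordinary inverse-image map along a monotone function, and then to extract the Heyting-homomorphism properties from a p-morphism condition. First I would use Lemma \ref{aux}: for each $P \in X(G)$ the set $f^{-1}(P)$ is irreducible, and it is proper since $f(0) = 0$ forces $0 \notin f^{-1}(P)$; hence $f^{-1}(P) \in X(H)$. Writing $\pi(P) := f^{-1}(P)$, this defines a \emph{total} map $\pi : X(G) \to X(H)$, and $P \subseteq P'$ clearly gives $f^{-1}(P) \subseteq f^{-1}(P')$, so $\pi$ is monotone for the inclusion order, equivalently for $\leq_d$ by Remark \ref{dso}. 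Because $f^{-1}(P) \in X(H)$, we have $R_f(P) = \{Q \in X(H) : f^{-1}(P) \subseteq Q\} = [\pi(P))$, the principal upset generated by $\pi(P)$.

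The key simplification is that for any upset $U$, since $\pi(P) \in [\pi(P))$ and $U$ is increasing, $[\pi(P)) \subseteq U$ holds if and only if $\pi(P) \in U$. Therefore $\hat{g}(U) = \{P : R_f(P) \subseteq U\} = \pi^{-1}(U)$. Monotonicity of $\pi$ shows $\pi^{-1}(U)$ is again an upset, so $\hat{g}$ is a well-defined map $\X(H)^{+} \to \X(G)^{+}$. As the inverse image under a total function, $\pi^{-1}$ automatically preserves $\cap$, $\cup$, $\emptyset$ and $X(H)$, so $\hat{g}$ is a bounded lattice homomorphism. Furthermore, evaluating at $U = \varphi(a)$ and comparing with equation (\ref{cd}) gives $\hat{g}(\varphi(a)) = g(\varphi(a))$; since $\varphi$ is an isomorphism onto $\D(\X(H))$, every element of the domain of $g$ has the form $\varphi(a)$, so $\hat{g}$ genuinely extends $g$.

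The only remaining, and main, point is that $\hat{g} = \pi^{-1}$ preserves the implication $\Ra$, for which it suffices that $\pi$ be a p-morphism. Unwinding the definition of $\Ra$ in $X^{+}$, one checks that $\pi(P) \in U \Ra V$ iff $Q \in V$ for every $Q \supseteq \pi(P)$ with $Q \in U$; the inclusion $\pi^{-1}(U \Ra V) \subseteq \pi^{-1}(U) \Ra \pi^{-1}(V)$ then follows purely from monotonicity of $\pi$. For the reverse inclusion I need the back condition: whenever $R \in X(H)$ satisfies $\pi(P) \subseteq R$, there exists $Z \in X(G)$ with $P \subseteq Z$ and $\pi(Z) = R$. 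This is exactly what condition $\mathrm{(HF)}$ supplies for the $H$-functional relation $R_f$: from $(P,R) \in R_f$ (that is, $f^{-1}(P) \subseteq R$) we obtain $Z$ with $P \leq_d Z$ and $R_f(Z) = [R)$, and since $R_f(Z) = [f^{-1}(Z))$ this forces $\pi(Z) = f^{-1}(Z) = R$. The main obstacle is precisely this translation of $\mathrm{(HF)}$ into the p-morphism back condition; once it is established, $\pi^{-1}$ preserves $\Ra$, and $\hat{g}$ is the desired homomorphism of Heyting algebras.
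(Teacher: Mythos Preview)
Your argument is correct and follows exactly the line the paper intends: the crucial input is Lemma~\ref{aux}, which turns $R_f(P)$ into the principal upset $[f^{-1}(P))$ and reduces $\hat g$ to the inverse image under a p-morphism $\pi=f^{-1}$, after which preservation of $\Ra$ follows from the back condition supplied by $\mathrm{(HF)}$. The paper merely cites \cite[Lemma~8]{CSM} together with Lemma~\ref{aux}; your write-up is a self-contained unpacking of that same strategy.
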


\begin{proof}
The proof is similar to \cite[Lemma 8]{CSM} by considering Lemma
\ref{aux}.
\end{proof}

The following remark is a well known fact from universal algebra
\cite{BS}.

\begin{rem} \label{r1}
Let $A$ and $B$ be algebras of the same type and $X\subseteq A$.
Write $\Sg(X)$ for the subalgebra of $A$ generated by $X$ and
$\Sg(f(X))$ for the subalgebra of $B$ generated by $f(X)$. If
$f:A\ra B$ is a homomorphism then $f(\Sg(X))= \Sg(f(X))$.
\end{rem}

\begin{lem} \label{morf}
The homomorphism of Heyting algebras $\hat{g}$ defined in
Corollary \ref{ext} satisfies $\hat{g}(H^*) \subseteq \G^*$.
\end{lem}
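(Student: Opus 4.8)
The plan is to reduce everything to the universal-algebraic fact recorded in Remark \ref{r1}, using that $\hat{g}$ is a homomorphism of Heyting algebras (Corollary \ref{ext}). By definition $H^{*} = \langle \varphi(H)\rangle_{\Hey} = \Sg(\varphi(H))$ is the Heyting subalgebra of $\X(H)^{+}$ generated by $\varphi(H)$. Since $\hat{g}\colon \X(H)^{+}\ra \X(G)^{+}$ is a homomorphism of Heyting algebras, Remark \ref{r1} applies in $\Hey$ and gives
\[
\hat{g}(H^{*}) = \hat{g}(\Sg(\varphi(H))) = \Sg(\hat{g}(\varphi(H))).
\]
Thus it suffices to control the image of the generating set $\varphi(H)$ under $\hat{g}$.

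The key computation is to identify $\hat{g}$ on these generators. Because $\hat{g}$ extends $g = \D(\X(f))$, and the diagram preceding Corollary \ref{ext} commutes, for each $a\in H$ we would have
\[
\hat{g}(\varphi(a)) = g(\varphi(a)) = \D(\X(f))(\varphi(a)) = \varphi(f(a));
\]
this is exactly the content of formula (\ref{cd}). Consequently $\hat{g}(\varphi(H)) = \varphi(f(H))$, and since $f(H)\subseteq G$ we get $\varphi(f(H)) \subseteq \varphi(G)$. Monotonicity of the generated-subalgebra operator then yields
\[
\hat{g}(H^{*}) = \Sg(\varphi(f(H))) \subseteq \Sg(\varphi(G)) = \langle \varphi(G)\rangle_{\Hey} = G^{*},
\]
which is the desired inclusion.

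The argument is essentially formal, so there is no genuinely hard step; the only point requiring care is confirming that the extension $\hat{g}$ furnished by Corollary \ref{ext} agrees with $\varphi\circ f$ on the generators $\varphi(a)$ — that is, that $\hat{g}(\varphi(a))$ is precisely $\varphi(f(a))$ and not merely some unspecified element of $G^{*}$. This is exactly where the commutativity of the diagram (equivalently, formula (\ref{cd})) is indispensable. Once that identification is in place, the conclusion follows immediately from Remark \ref{r1} together with the monotonicity of $\Sg$, with no further calculation needed.
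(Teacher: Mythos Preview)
Your argument is correct and follows exactly the same route as the paper's proof, which simply cites Corollary~\ref{ext}, Remark~\ref{r1}, and the identity $g(\varphi(a))=\varphi(f(a))$ from~(\ref{cd}). You have merely unpacked these references into the explicit chain $\hat{g}(H^{*})=\Sg(\hat{g}(\varphi(H)))=\Sg(\varphi(f(H)))\subseteq \Sg(\varphi(G))=G^{*}$.
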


\begin{proof}
It follows from Lemma \ref{ext}, Remark \ref{r1} and the equality
$g(\varphi(a)) = \varphi(f(a))$ given in (\ref{cd}).
\end{proof}

Let $f:H\ra G$ be a morphism in $\PHilz$.  It follows from
Corollary \ref{ext} and Lemma \ref{morf} that the map $f^{*}:
H^{*} \ra G^{*}$ given by $f^{*}(U) = \hat{g}(U)$ is a morphism in
$\PHey$.

Let $\mathsf{Id}$ be an identity morphism in $\PHilz$. It is
immediate that $\mathsf{Id}^{*}$ is an identity in $\PHey$. Let
$f:H\ra G$ and $g:G \ra K$ be morphisms in $\PHilz$. It follows
from \cite[Theorem 3.3]{CCM} that $R_{g\circ f} = R_{g} \circ
R_{f}$. Hence, straightforward computations based in the above
mentioned equality shows that
\begin{equation*}
\label{isafunctor} (g \circ f)^{*} = g^* \circ f^*.
\end{equation*}

Hence we have that the assignment $H\mapsto H^*$ and $f\mapsto
f^*$ defines a functor $\f:\PHilz \ra \PHey$. \vspace{3pt}

Let $\U$ be the forgetful functor from $\PHey$ to $\PHilz$. Let
$H\in \PHilz$. Consider the injective morphism of Hilbert algebras
$\psi:H\ra \U(H^*)$ given by $\psi(a) = \varphi(a)$.

\begin{prop} \label{pu}
Let $G\in \PHey$ and $f:H\ra U(G^*)\in \PHilz$. Then, there exists
a unique morphism $h:H^* \ra G$ such that $f = \U(h) \circ \psi$.
\end{prop}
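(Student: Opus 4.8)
The plan is to prove uniqueness from the fact that $\varphi_{H}(H)$ generates $H^{*}$, and to produce $h$ by applying the functor $\f$ to $f$ together with an identification of $\U(G)^{*}$ with $G$. First I would observe that any morphism $h\colon H^{*}\ra G$ in $\PHey$ with $f=\U(h)\circ\psi$ must satisfy $h(\varphi_{H}(a))=\U(h)(\psi(a))=f(a)$ for all $a\in H$, because $\psi(a)=\varphi_{H}(a)$. Since $H^{*}=\langle\varphi_{H}(H)\rangle_{\Hey}$ and a homomorphism of Heyting algebras is determined by its restriction to a generating set, this forces uniqueness at once. Hence the entire problem reduces to constructing a Heyting homomorphism $h$ with $h(\varphi_{H}(a))=f(a)$ for every $a$, where $f\colon H\ra\U(G)$ (the codomain being $\U(G)$, as dictated by $f=\U(h)\circ\psi$).

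For existence I would proceed functorially, so as to avoid checking directly that the assignment $\varphi_{H}(a)\mapsto f(a)$ extends well to all Heyting combinations. As $G\in\PHey$, its Hilbert reduct $\U(G)$ belongs to $\PHilz$ and $f\colon H\ra\U(G)$ is a morphism there; applying $\f$ gives a morphism $f^{*}\colon H^{*}\ra\U(G)^{*}$ in $\PHey$ which, by the description $(\ref{cd})$, acts on generators by $f^{*}(\varphi_{H}(a))=\{P\in\X(\U(G)):R_{f}(P)\subseteq\varphi_{H}(a)\}=\varphi_{G}(f(a))$. Granting the key lemma that $\varphi_{G}\colon G\ra\U(G)^{*}$ is an isomorphism in $\PHey$, I would set $\sigma=\varphi_{G}^{-1}$ and define $h=\sigma\circ f^{*}$. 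Then $h$ is a $\PHey$-morphism and $h(\varphi_{H}(a))=\sigma(\varphi_{G}(f(a)))=f(a)$, so $\U(h)\circ\psi=f$, which establishes existence.

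The main obstacle will be the auxiliary isomorphism $\U(G)^{*}\cong G$, and I would prove it by showing that for a Heyting algebra $G$ the map $\varphi_{G}$ is already a Heyting embedding with image a Heyting subalgebra, so that $\U(G)^{*}=\langle\varphi_{G}(\U(G))\rangle_{\Hey}$ collapses to $\varphi_{G}(G)\cong G$. This rests on two points. First, the irreducible implicative filters of $\U(G)$ are exactly the prime filters of $G$: in a Heyting algebra the implicative filters coincide with the lattice filters, and the meet-irreducible ones among these are precisely the prime filters. Second, over prime filters $\varphi_{G}$ preserves finite meets and joins, since $a\we b\in P$ iff $a\in P$ and $b\in P$, and $a\vee b\in P$ iff $a\in P$ or $b\in P$; as $\varphi_{G}$ preserves $\Ra$ and the bounds by the Hilbert duality of Theorem \ref{rh} and is injective, it is a Heyting embedding. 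Therefore $\varphi_{G}(G)$ is closed under all Heyting operations and equals $\langle\varphi_{G}(G)\rangle_{\Hey}=\U(G)^{*}$, giving the isomorphism. Once this general Heyting fact is in hand the computation of the second paragraph completes the proof; note that prelinearity enters only to ensure that $H^{*}$, $\U(G)^{*}$ and the functor $\f$ stay within $\PHey$ (Corollary \ref{bpteo2}), not in the crux itself.
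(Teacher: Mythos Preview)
Your proof is correct and follows essentially the same route as the paper: apply the functor $\f$ to $f$ to obtain $f^{*}\colon H^{*}\to\U(G)^{*}$, identify $\U(G)^{*}$ with $G$ via $\varphi_{G}$, and set $h=\varphi_{G}^{-1}\circ f^{*}$, checking the factorization through $(\ref{cd})$. Your treatment is in fact more complete: you supply the uniqueness argument (which the paper omits) and you justify the isomorphism $\varphi_{G}\colon G\cong\U(G)^{*}$ via the identification of irreducible implicative filters with prime filters, whereas the paper only records that $\varphi_{G}$ preserves meets and leaves the preservation of joins implicit (it follows in the prelinear case from Proposition~\ref{ip}, since $a\vee b$ is a term in $\wedge$ and $\ra$).
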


\begin{proof}
The map $f^*: H^* \ra G^*$ is a morphism in $\PHey$. Since $G\in
\PHey$ then for every $a,b\in G$ we have that $\varphi(a\we b) =
\varphi(a) \cap \varphi(b)$, so we deduce that the map
$\varphi:G\ra G^*$ is an isomorphism in $\PHey$. Hence, the map
$h:H^* \ra G$ given by $h = \varphi^{-1} \circ f^*$ is also a
morphism in $\PHey$. Finally, taking into account (\ref{cd}) we
have that $f = \U(h) \circ \psi$.
\end{proof}

Let $\mathrm{I}$ be the identity functor in $\PHilz$. It follows
from (\ref{cd}) that $\Psi: \mathrm{I} \ra \U \circ \f$ is a
natural transformation. Here, the family of morphism associated to
the natural transformation is given by the morphisms $\psi$.

In other words, to say that $\Psi: \mathrm{I} \ra \U \circ \f$ is
a natural transformation is equivalent to say that if $f:H\ra G$
is a morphism in $\PHilz$ then the following diagram commutes:

$$
 \xymatrix{
   H \ar[rr]^{f} \ar[d]_{\psi} & & G  \ar[d]^{\psi}\\
   \U(H^*) \ar[rr]^{\U(f^*)} & & \U(G^*). \
   }
$$

Therefore we get the following result.

\begin{thm} \label{pteo}
The functor $\f: \PHilz \ra \PHey$ is left adjoint to $\U$.
\end{thm}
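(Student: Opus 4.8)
The plan is to show that, for each $H \in \PHilz$, the morphism $\psi \colon H \ra \U(H^*)$ is a universal arrow from $H$ to the functor $\U$. Since the family $\{\psi\}$ has already been verified to constitute a natural transformation $\Psi \colon \mathrm{I} \ra \U \circ \f$, this universality is exactly what is needed to conclude $\f \dashv \U$. Concretely, I must check that the assignment $h \mapsto \U(h) \circ \psi$ defines a bijection
\[
\mathrm{Hom}_{\PHey}(H^*, G) \cong \mathrm{Hom}_{\PHilz}(H, \U(G))
\]
for every $G \in \PHey$, and that this bijection is natural in both variables.

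The crucial point is that Proposition \ref{pu} already delivers the required unique factorization, but with codomain $\U(G^*)$ rather than $\U(G)$. To bridge this gap I would first record that $\varphi \colon G \ra G^*$ is an isomorphism in $\PHey$. Indeed, when $G$ is a Heyting algebra the map $\varphi$ preserves all of the Heyting operations (not merely $\ra$ and $1$), so $\varphi(G)$ is already a Heyting subalgebra of $\X(G)^{+}$, whence $G^* = \langle \varphi(G) \rangle_{\Hey} = \varphi(G)$; thus $\varphi$ is a surjective Heyting homomorphism, and it is injective because $\varphi \colon G \ra \D(\X(G))$ is an isomorphism in $\Hil$. Applying the forgetful functor yields an isomorphism $\U(\varphi) \colon \U(G) \ra \U(G^*)$ in $\PHilz$.

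With this identification the remainder is formal. Given $f \colon H \ra \U(G)$ in $\PHilz$, I set $f' = \U(\varphi) \circ f \colon H \ra \U(G^*)$ and apply Proposition \ref{pu} to obtain the unique $h' \colon H^* \ra G$ with $\U(h') \circ \psi = f'$; then $h := \varphi^{-1} \circ h'$ satisfies $\U(h) \circ \psi = \U(\varphi^{-1}) \circ f' = f$, and its uniqueness follows from that of $h'$ together with $\varphi$ being an isomorphism. For naturality, the slot in $G$ is immediate from the functoriality of $\U$ (since $\U(k) \circ \U(h) \circ \psi = \U(k \circ h) \circ \psi$), while the slot in $H$ is precisely the commutativity of the square displayed just before the theorem, i.e. $\U(p^*) \circ \psi = \psi \circ p$ for a morphism $p$ in $\PHilz$.

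The step demanding the most care is the verification in the second paragraph that $\varphi \colon G \ra G^*$ is an isomorphism of \emph{Heyting} algebras, which rests on the standard fact that for a Heyting algebra the representation $a \mapsto \{P \in X(G) : a \in P\}$ preserves meets, joins, bottom and implication, so that the Heyting subalgebra generated by $\varphi(G)$ coincides with $\varphi(G)$ itself. Once this identification $G \cong G^*$ is secured, the adjunction drops out of Proposition \ref{pu} and the naturality of $\Psi$ with no further computation.
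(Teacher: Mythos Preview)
Your approach is essentially the paper's: the theorem is obtained directly from Proposition~\ref{pu} (which furnishes the universal arrow) together with the naturality of $\Psi$ already recorded just before the statement. The paper in fact gives no further argument beyond ``Therefore we get the following result,'' and your proposal correctly spells out that this is an instance of the standard characterisation of a left adjoint via universal arrows.

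One point deserves comment. The detour through $\U(G^*)$ in your second and third paragraphs is unnecessary, and as written it does not typecheck: if Proposition~\ref{pu} produces $h'\colon H^*\to G$, then $\U(h')\circ\psi$ lands in $\U(G)$, not $\U(G^*)$, so it cannot equal your $f'$; and $\varphi^{-1}\circ h'$ is ill-formed when $h'$ already has codomain $G$. The source of the confusion is that the hypothesis of Proposition~\ref{pu} contains an evident misprint: it should read $f\colon H\to \U(G)$, not $f\colon H\to \U(G^{*})$. Indeed, the proof of that proposition constructs $h=\varphi^{-1}\circ f^{*}\colon H^{*}\to G$ and concludes $f=\U(h)\circ\psi$, which only makes sense for $f\colon H\to\U(G)$; and the isomorphism $\varphi\colon G\to G^{*}$ that you carefully justify is exactly what is invoked there. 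With the hypothesis corrected, Proposition~\ref{pu} is already the full universal property of $\psi$, and no bridging between $\U(G)$ and $\U(G^{*})$ is needed.
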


In \cite{CJ2} Celani and Jansana presented an explicit description
for the left adjoint of the forgetful functor from the category of
implicative semilattices to the category of Hilbert algebras. In
particular, they proved that if $H$ is a Hilbert algebra then
\[
\FC(H) = \{U : U = \varphi(a_1)\cap \cdots \cap
\varphi(a_n)\;\text{for some}\; a_1,\ldots,a_n \in H\}.
\]
is an implicative semilattice, which is called the \emph{free
implicative semilattice extension of the Hilbert algebra $H$} (see
also \cite{CSM}).

\begin{prop} \label{SH}
Let $H\in \PHilz$. Then $S(H) \in \PHey$ and $S(H) = H^*$.
\end{prop}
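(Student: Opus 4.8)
The plan is to realise $S(H)$ as a Heyting subalgebra of the prelinear Heyting algebra $\X(H)^{+}$ and then identify it with $H^{*}$. First I would record that $S(H)$ sits inside $\X(H)^{+}$ as a sub-implicative-semilattice, with meet the set-theoretic intersection and implication the Heyting implication $\Ra$. Indeed $\varphi(H)$ is the universe of $\D(\X(H))$, which is a Hilbert subalgebra of $\X(H)^{+}$ and hence closed under $\Ra$; using the Heyting identities $(U\cap V)\Ra W = U\Ra(V\Ra W)$ and $U\Ra(V\cap W) = (U\Ra V)\cap(U\Ra W)$ one checks that the set of finite intersections of members of $\varphi(H)$ is again closed under $\Ra$. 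Since $H$ is bounded we also have $\emptyset = \varphi(0)\in S(H)$ and $\X(H) = \varphi(1)\in S(H)$, so $(S(H),\we,\ra,0,1)$, with $\we = \cap$ and $\ra = \Ra$, is a bounded implicative semilattice.

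Next I would deduce $S(H)\in\PHey$ from Proposition \ref{ip}. By Lemma \ref{bpteo} the algebra $\X(H)^{+}$ is a prelinear Heyting algebra, so its Hilbert reduct is a prelinear Hilbert algebra (Proposition \ref{mp2}); since prelinearity is equational, the Hilbert subalgebra $S(H)$ of that reduct is prelinear as well. Thus $(S(H),\ra,\we,1)$ is an implicative semilattice whose Hilbert reduct is a bounded prelinear Hilbert algebra, and Proposition \ref{ip} yields that $(S(H),\ra,\we,\vee,0,1)$ is a prelinear Heyting algebra, with $U\vee V = ((U\Ra V)\Ra V)\we((V\Ra U)\Ra U)$.

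It remains to prove $S(H) = H^{*}$, for which I would check that $S(H)$ is a Heyting subalgebra of $\X(H)^{+}$; the only nonobvious point is closure under the ambient union $\cup$. The crux is that the join supplied by Proposition \ref{ip} coincides with $\cup$: applying that proposition to the prelinear implicative semilattice $\X(H)^{+}$ itself shows that $U\cup V = ((U\Ra V)\Ra V)\cap((V\Ra U)\Ra U)$ for all $U,V\in\X(H)^{+}$, because in a Heyting algebra the supremum is the lattice join. For $U,V\in S(H)$ the right-hand side lies in $S(H)$ by closure under $\Ra$ and $\cap$, so $U\cup V\in S(H)$. Hence $S(H)$ is closed under $\cap$, $\cup$ and $\Ra$ and contains $\emptyset$ and $\X(H)$, i.e.\ it is a Heyting subalgebra of $\X(H)^{+}$ containing $\varphi(H)$; therefore $H^{*} = \langle\varphi(H)\rangle_{\Hey}\subseteq S(H)$. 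The reverse inclusion is clear, as $H^{*}$ is closed under finite intersections and contains each $\varphi(a)$, so $S(H) = H^{*}$.

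The step I expect to be the main obstacle is precisely this last identification of the Heyting join of $S(H)$ with the set-theoretic union: Proposition \ref{ip} on its own only manufactures an abstract supremum inside $S(H)$, and one must verify that it agrees with the join of the ambient algebra $\X(H)^{+}$, so that $S(H)$ is genuinely a Heyting subalgebra and not merely an implicative semilattice living inside $\X(H)^{+}$. The shared join formula of Proposition \ref{ip}, valid in both algebras, is what bridges the two computations and closes the argument.
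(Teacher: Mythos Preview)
Your proof is correct and follows the same overall architecture as the paper's---both reduce to Proposition~\ref{ip} after establishing that the Hilbert reduct of $S(H)$ is prelinear---but the two executions differ in how that prelinearity is obtained and in how the equality $S(H)=H^*$ is justified.

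The paper verifies prelinearity of $S(H)$ by a direct computation: writing $U,V\in S(H)$ as finite intersections $\bigcap\varphi(a_i)$ and $\bigcap\varphi(b_j)$, it shows $(U\Ra V)\cup(V\Ra U)=X(H)$ by exploiting the root-system structure of $X(H)$ (Proposition~\ref{mp4}). You instead invoke Lemma~\ref{bpteo} to get prelinearity of the ambient $\X(H)^{+}$ and then pass to the Hilbert subalgebra $S(H)$ by equationality. Your route is shorter and avoids repeating what is essentially a special case of Lemma~\ref{bpteo}; the paper's route has the minor advantage of being self-contained at this point and of making the role of the explicit form of elements of $S(H)$ visible. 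For the equality $S(H)=H^{*}$ the paper is terse (``follows from that $S(H)$ is a Heyting algebra and $\varphi(H)\subseteq S(H)\subseteq H^{*}$''), while you explicitly close the gap the paper leaves implicit: you argue, via Proposition~\ref{ip} applied to $\X(H)^{+}$, that the abstract join on $S(H)$ coincides with the ambient union, so that $S(H)$ is genuinely a Heyting \emph{subalgebra} of $\X(H)^{+}$ and hence contains $H^{*}$. This is exactly the point that needs checking and your argument handles it cleanly.
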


\begin{proof}
By Proposition \ref{mp4} and Proposition \ref{ip} in order to
prove that $S(H) \in \PHey$ it is enough to see that
$(U\Rightarrow V)\cup(V\Rightarrow U)=X(H)$ for every $U,V\in
S(H)$. Let $U,V\in S(H)$. Then there exist two finite subsets
$\{a_{1},\ldots,a_{n}\}$ and $\{b_{1},\ldots,b_{k}\}$ of $H$ such
that $U=\varphi(a_{1})\cap\ldots\cap\varphi(a_{n})$ and
$V=\varphi(b_{1})\cap\ldots\cap\varphi(b_{k})$. Suppose that there
exists $P\in \X(H)$ such that $P\notin(U\Rightarrow
V)\cup(V\Rightarrow U)$. Thus, there exist $Q, Z\in \X(H)$ such
that $P\subseteq Q$, $P\subseteq Z$, $Q\in U\cap V^c$ and $Z \in
V\cap U^c$. Hence, there exists $i=1,\ldots,n$ and $j=1,\ldots,k$
such that $Q\notin\varphi(b_{j})$ and $Z\notin\varphi(a_{i})$,
i.e., $b_j\notin Q$ and $a_i \notin Z$. Since $H$ is prelinear
then it follows from Proposition \ref{mp4} that $Q\subseteq Z$ or
$Z\subseteq Q$. Then, $a_{i}\in Z$ or $b_{j}\in Q$, which is a
contradiction. Thus, $S(H) \in \PHey$.

The fact that $S(H) = H^*$ follows from that $S(H)$ is a Heyting
algebra and $\varphi(H) \subseteq S(H) \subseteq H^*$.
\end{proof}

Finally we will study some connections between $\X(H)$ and
$\X(H^*)$ for $H$ a finite Hilbert algebra.
\vspace{3pt}

\begin{prop}
Let $H$ be a finite Hilbert algebra. Then $H^* = \X(H)^{+}$.
\end{prop}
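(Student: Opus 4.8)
The plan is to prove the two inclusions. One is immediate: by definition $H^* = \langle \varphi(H)\rangle_{\Hey}$ is a Heyting subalgebra of $\X(H)^{+}$, so $H^* \subseteq \X(H)^{+}$. The work is to establish the reverse inclusion $\X(H)^{+} \subseteq H^*$, that is, to show that every upset of $(X(H),\pd)$ lies in $H^*$. By Remark \ref{dso} the order $\pd$ is just inclusion on $X(H)$, and by Remark \ref{HH} the algebra $\X(H)^{+}$ is the Heyting algebra of all $\subseteq$-upsets. Since $H$ is finite, $X(H)$ is a finite poset, and in the finite distributive lattice of its upsets every element is a finite union of the principal upsets $[P) = \{Q \in X(H) : P \subseteq Q\}$, for $P \in X(H)$. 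Because $H^*$ is closed under finite unions (its join is the union inherited from $\X(H)^{+}$), it therefore suffices to show that each principal upset $[P)$ belongs to $H^*$.

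The crux is the following identity, which exploits finiteness. For $P \in X(H)$,
\[
[P) = \{Q \in X(H) : P \subseteq Q\} = \bigcap_{a \in P}\varphi(a),
\]
since $P \subseteq Q$ holds exactly when $a \in Q$ for every $a \in P$. As $H$ is finite, $P$ is a finite set, so the right-hand side is a \emph{finite} intersection of members of $\varphi(H)$. Since $\varphi(H) \subseteq H^*$ and $H^*$ is closed under finite meets (the meet being intersection, inherited from $\X(H)^{+}$), we conclude that $[P) \in H^*$.

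Combining the two observations finishes the argument: given an arbitrary upset $U \in \X(H)^{+}$, we have $U = \bigcup_{Q \in U}[Q)$ (each $[Q) \subseteq U$ because $U$ is an upset, and $Q \in [Q)$), and this union is finite because $X(H)$ is finite. Hence $U \in H^*$, which yields $\X(H)^{+} \subseteq H^*$ and therefore $H^* = \X(H)^{+}$. I do not expect a genuine obstacle here: the only real ingredient is the finiteness of $H$, which simultaneously makes $\bigcap_{a \in P}\varphi(a)$ an available finite meet and makes every upset a finite join of principal upsets. The single point to keep honest is that the lattice operations used inside $H^*$ coincide with intersection and union in $\X(H)^{+}$, which holds precisely because $H^*$ is, by construction, a Heyting subalgebra of $\X(H)^{+}$.
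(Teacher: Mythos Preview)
Your proof is correct and follows essentially the same approach as the paper: both arguments show that every upset of the finite poset $X(H)$ is a finite union of principal upsets $[P)$, and that each $[P)$ equals the finite intersection $\bigcap_{a\in P}\varphi(a)$, hence lies in $H^*$. The only cosmetic differences are that the paper indexes the union by the minimal elements of $U$ (and needlessly pads the $P_i$ to a common size), while you index by all of $U$; your version is slightly cleaner and also makes explicit why the lattice operations in $H^*$ agree with intersection and union.
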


\begin{proof}
In order to prove it, let $U\in \X(H)^{+}$ and $U \neq \emptyset$.
Then there exist $P_1,\ldots,P_n \in \X(H)$ such that $U =
\bigcup_{i=1}^{n} [P_i)$. For instance, we can choose
$P_1,\ldots,P_n$ as the minimal elements of $U$. For every $i=1,
\ldots, n$ there exist $a_{i 1},\ldots,a_{i m} \in H$ such that
$P_i = \{a_{i 1},\ldots,a_{i m}\}$. So, $[P_i) =
\bigcap_{j=1}^{m}\varphi(a_{i j})$. Notice that for every
$P_1,\ldots, P_n$ we can choose the same $m$ in the above
mentioned reasoning. In order to make it possible, for every
$i=1,\ldots,n$ let $c_i$ be the cardinal of $P_i$. Write $m$ for
the maximum of the set $\{c_1, \ldots, c_n\}$. Then for each $c_i$
such that $c_i < m$ define $a_{ij} = 1$ for every $j \in
\{c_{i}+1,\ldots,m\}$, which was our aim. Hence, for every $U\in
\X(H)^{+}$ ($U \neq \emptyset$) there exists a finite family
$\{a_{ij}\}_{i,j}$ in $H$ (with $i=1,\ldots, n$ and $j=1,\ldots
m$) such that
\begin{equation} \label{finite}
U = \bigcup_{i=1}^{n}\bigcap_{j=1}^{m} \varphi(a_{ij}).
\end{equation}
Therefore, $H^{*} = \X(H)^{+}$.
\end{proof}

The following result will be used later.

\begin{prop}\label{finiteH}
Let $H$ be a finite algebra of $\PHilz$. Then the map $\eta:
\X(H^*) \ra \X(H)$ given by $\eta(P) = \varphi^{-1}(P)$ is an
order isomorphism.
\end{prop}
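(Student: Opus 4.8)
The plan is to verify that $\eta$ is well defined, monotone, order-reflecting and surjective; since a monotone, order-reflecting surjection between posets is automatically an order isomorphism, this suffices. Throughout I read $\X(H^{*})$ as the poset of irreducible implicative filters of the underlying Hilbert algebra $\U(H^{*})$, ordered by inclusion, and I recall that for the Heyting algebra $H^{*}$ these coincide with its prime filters.

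For well-definedness, first note that since $H^{*}\in\PHey$ by Corollary \ref{bpteo2}, we have $\U(H^{*})\in\PHilz$ (it is bounded, with bottom $\emptyset$). Moreover $\psi=\varphi\colon H\ra\U(H^{*})$ is a morphism in $\PHilz$: it is an injective morphism of Hilbert algebras and $\varphi(0)=\emptyset$ is the bottom of $H^{*}$. Hence, for $P\in\X(H^{*})=\X(\U(H^{*}))$, Lemma \ref{aux} applied to $\psi$ yields $\eta(P)=\varphi^{-1}(P)=\psi^{-1}(P)\in\X(H)$, so $\eta$ is well defined. Monotonicity is immediate, since $P\subseteq Q$ clearly forces $\varphi^{-1}(P)\subseteq\varphi^{-1}(Q)$.

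The core step is order-reflection, namely that $\varphi^{-1}(P)\subseteq\varphi^{-1}(Q)$ implies $P\subseteq Q$ (this also gives injectivity). Here finiteness enters through the preceding proposition: $H^{*}=\X(H)^{+}$ and every $U\in H^{*}$ admits the representation $U=\bigcup_{i}\bigcap_{j}\varphi(a_{ij})$ of (\ref{finite}). For $P\in\X(H^{*})$ I would observe that membership of such a $U$ in $P$ is governed entirely by $\varphi^{-1}(P)$: since $P$ is a filter, $\bigcap_{j}\varphi(a_{ij})\in P$ iff $\varphi(a_{ij})\in P$ for every $j$, i.e. iff every $a_{ij}\in\varphi^{-1}(P)$; and since $P$ is irreducible and the join in $H^{*}$ is union, \cite[Theorem 3.2]{CHJ} applied inductively gives $U\in P$ iff $\bigcap_{j}\varphi(a_{ij})\in P$ for some $i$. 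Granting $\varphi^{-1}(P)\subseteq\varphi^{-1}(Q)$, any $U\in P$ then has some $i$ with all $a_{ij}\in\varphi^{-1}(P)\subseteq\varphi^{-1}(Q)$, whence $\bigcap_{j}\varphi(a_{ij})\in Q$ and so $U\in Q$ because $Q$ is upward closed. Thus $P\subseteq Q$.

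For surjectivity, given $F\in\X(H)$ I would take $P_{F}:=\{U\in H^{*}:F\in U\}$. Using that $H^{*}=\X(H)^{+}$ is a distributive lattice of upsets and that $F\in U\cup V$ iff $F\in U$ or $F\in V$, one checks directly that $P_{F}$ is a proper irreducible filter of $H^{*}$, hence $P_{F}\in\X(H^{*})$, and $\eta(P_{F})=\{a\in H:F\in\varphi(a)\}=\{a\in H:a\in F\}=F$. Combining the four parts, $\eta$ is a monotone, order-reflecting surjection, i.e. an order isomorphism. I expect the order-reflection step to be the main obstacle, as it is the only place where both the finite normal form (\ref{finite}) and the irreducibility of $P$ are genuinely needed; the remaining parts are essentially bookkeeping.
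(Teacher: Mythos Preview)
Your argument is correct. The paper, however, does not give a direct proof at all: it simply invokes \cite[Theorem 11]{CSM} together with \cite[Lemma 4.6]{CJ2} and \cite[Proposition 7.7]{CJ2}, which treat the corresponding statement for the free implicative semilattice extension $\FC(H)$ and then transfer it via $\FC(H)=H^{*}$ (Proposition \ref{SH}). Your route is genuinely different: you work entirely inside the paper, using Lemma \ref{aux} for well-definedness, the finite normal form (\ref{finite}) together with the prime-splitting of joins (as in the proof of Proposition \ref{mp5}) for order-reflection, and the point-evaluation filter $P_{F}=\{U:F\in U\}$ for surjectivity. What you gain is self-containment and transparency about where finiteness is used (only through $H^{*}=\X(H)^{+}$ and the normal form); what the paper's approach gains is brevity and the observation that the result is really an instance of a known correspondence between irreducible filters of $H$ and of its semilattice envelope. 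One small remark: your appeal to \cite[Theorem 3.2]{CHJ} is not strictly needed, since once you know $H^{*}=\X(H)^{+}$ you are in a Heyting algebra and irreducible implicative filters are prime lattice filters, so the join-splitting is immediate.
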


\begin{proof}
It follows from \cite[Theorem 11]{CSM}, \cite[Lemma 4.6]{CJ2} and
\cite[Proposition 7.7]{CJ2}.
\end{proof}

\section{Coproduct in $\PHilz$ for finite algebras}\label{s4}

In this section we study some properties of the coproduct of two
finite algebras in $\PHilz$ through the study of the product in
$\PHSz$ of its associated finite $H$-spaces.

Let $H\in \PHey$ and $S\subseteq H$ with $0\in S$. We write
$\langle S\rangle_{\PHilz}$ as the Hilbert subalgebra of $H$
generated by $S$. Note that $\langle S\rangle_{\PHilz}\in \PHilz$.

Let $H,G\in \PHilz$. Let $i_H:H^* \ra H^* \cPHey G^*$ and $i_G:G^*
\ra H^* \cPHey G^*$ be the morphisms in $\PHey$ given by the
definition of coproduct. Consider $\varphi_H:H\ra H^*$ and
$\varphi_G:G\ra G^*$. Then we define
\[
\eta_{GH}: = i_{H}(\varphi_{H}(H)) \cup i_{G}(\varphi_{G}(G)).
\]
Consider maps $j_H:H \ra \langle \eta_{GH}\rangle_{\PHilz}$ and
$j_G:G \ra \langle \eta_{GH}\rangle_{\PHilz}$ defined by by $j_H =
i_{H} \circ \varphi_H$ and $j_G = i_{G} \circ \varphi_G$.

Let $H, G\in \PHilz$. In the following proposition we will show
that for every $\alpha:H\ra J \in \PHilz$ and $\beta:G\ra J \in
\PHilz$ there is an unique morphism $f:\langle
\eta_{GH}\rangle_{\PHilz} \ra J$ in $\PHilz$ such that the
following diagram commutes:

$$
\xymatrix{
& J  & \\
H \ar[ur]_{\alpha} \ar[dr]_{j_H} & & G \ar[ul]^{\beta} \ar[dl]^{j_G}\\
& \langle \eta_{GH}\rangle \ar[uu]^{f} & }
$$

\begin{prop}
Let $H, G\in \PHilz$. Then $H\cPHilz G \cong \langle
\eta_{GH}\rangle_{\PHilz}$.
\end{prop}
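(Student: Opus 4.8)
The plan is to verify directly that the pair $(j_H,j_G)$ exhibits $\langle \eta_{GH}\rangle_{\PHilz}$ as a coproduct of $H$ and $G$ in $\PHilz$, i.e.\ to establish the universal property displayed in the diagram above. First I would record the easy preliminaries: since $i_H$ and $i_G$ are morphisms of Heyting algebras they preserve $0$, and $\varphi_H(0)=\emptyset$, so $0\in\eta_{GH}$; hence $\langle\eta_{GH}\rangle_{\PHilz}\in\PHilz$ by the remark preceding the proposition. Likewise $j_H=i_H\circ\varphi_H$ and $j_G=i_G\circ\varphi_G$ are morphisms in $\PHilz$, since each is a composite of a Hilbert morphism with (the underlying Hilbert morphism of) a Heyting morphism and lands in $\langle\eta_{GH}\rangle_{\PHilz}$.

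The heart of the argument is the following construction. Given $\alpha:H\ra J$ and $\beta:G\ra J$ in $\PHilz$, apply the functor $\f$ to obtain morphisms $\alpha^{*}:H^{*}\ra J^{*}$ and $\beta^{*}:G^{*}\ra J^{*}$ in $\PHey$ (here $J^{*}\in\PHey$ by Proposition \ref{SH}). By the universal property of the coproduct $H^{*}\cPHey G^{*}$ in $\PHey$ there is a unique Heyting morphism $\gamma:H^{*}\cPHey G^{*}\ra J^{*}$ with $\gamma\circ i_H=\alpha^{*}$ and $\gamma\circ i_G=\beta^{*}$. The crucial point, which I would verify using the identity $\alpha^{*}(\varphi_H(a))=\varphi_J(\alpha(a))$ read off from (\ref{cd}) (and its analogue for $\beta$), is that $\gamma$ sends each generator of $\eta_{GH}$ into the Hilbert subalgebra $\varphi_J(J)$ of $\U(J^{*})$: indeed $\gamma(i_H(\varphi_H(a)))=\alpha^{*}(\varphi_H(a))=\varphi_J(\alpha(a))$ and symmetrically $\gamma(i_G(\varphi_G(b)))=\varphi_J(\beta(b))$. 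Since $\gamma$ is in particular a morphism of Hilbert algebras, Remark \ref{r1} then yields $\gamma(\langle\eta_{GH}\rangle_{\PHilz})\subseteq\langle\gamma(\eta_{GH})\rangle_{\PHilz}\subseteq\varphi_J(J)$.

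Because $\varphi_J$ is an injective Hilbert morphism, it is an isomorphism of $J$ onto the Hilbert subalgebra $\varphi_J(J)$, so I can define $f:=\varphi_J^{-1}\circ\gamma|_{\langle\eta_{GH}\rangle_{\PHilz}}:\langle\eta_{GH}\rangle_{\PHilz}\ra J$, a morphism in $\PHilz$. Commutativity follows by evaluating on generators: $f(j_H(a))=\varphi_J^{-1}(\gamma(i_H(\varphi_H(a))))=\varphi_J^{-1}(\varphi_J(\alpha(a)))=\alpha(a)$, and likewise $f\circ j_G=\beta$. For uniqueness, any $f'$ making the diagram commute must agree with $f$ on $j_H(H)\cup j_G(G)=\eta_{GH}$; since $\eta_{GH}$ generates $\langle\eta_{GH}\rangle_{\PHilz}$ as a Hilbert algebra and a Hilbert morphism is determined by its values on a generating set, $f'=f$.

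I expect the main obstacle to be exactly the step showing that $\gamma$ carries the \emph{generated} subalgebra $\langle\eta_{GH}\rangle_{\PHilz}$ into the copy $\varphi_J(J)$ of $J$: this requires both the naturality identity $\alpha^{*}\circ\varphi_H=\varphi_J\circ\alpha$ to compute $\gamma$ on generators and the invocation of Remark \ref{r1} to pass from generators to the generated subalgebra, together with the observation that $\varphi_J(J)$ is itself a Hilbert subalgebra of $\U(J^{*})$. Once these are in place, everything else is bookkeeping with the universal property of the coproduct in $\PHey$ and with the fact that $\varphi_J$ embeds $J$ isomorphically onto $\varphi_J(J)$.
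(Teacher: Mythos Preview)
Your argument is correct and follows essentially the same route as the paper: you pass to $\PHey$ via the functor $(\ )^{*}$, use the universal property of $H^{*}\cPHey G^{*}$ to obtain the Heyting morphism (called $F$ in the paper, $\gamma$ in your write-up), verify via the naturality identity $\alpha^{*}\circ\varphi_{H}=\varphi_{J}\circ\alpha$ and Remark~\ref{r1} that this morphism carries $\langle\eta_{GH}\rangle_{\PHilz}$ into $\varphi_{J}(J)$, and then compose with $\varphi_{J}^{-1}$ to define $f$, with uniqueness following since any two candidates agree on the generating set $\eta_{GH}$. Your additional opening remarks justifying that $j_{H},j_{G}$ are morphisms in $\PHilz$ are a welcome clarification but do not alter the strategy.
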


\begin{proof}
Let $\alpha:H\ra J \in \PHilz$ and $\beta:G\ra J \in \PHilz$. Then
$\alpha^*:H^*\ra J^* \in \PHey$ and $\beta^*:G^*\ra J^* \in
\PHey$. Then there exists an unique morphism $F:H^* \cPHey G^* \ra
J^*$ in $\PHey$ such that the following diagram commutes:
$$
\xymatrix{
& J^*  & \\
H^* \ar[ur]_{\alpha^*} \ar[dr]_{i_H} & & G^* \ar[ul]^{\beta^*} \ar[dl]^{i_G}\\
& H^*\cPHey G^* \ar[uu]^{F} & }
$$
In particular,
\begin{equation} \label{cop1}
\alpha^* = F\circ i_H,
\end{equation}
\begin{equation} \label{cop2}
\beta^* = F\circ i_G.
\end{equation}
In what follows we will see that $F(\langle
\eta_{GH}\rangle_{\PHilz}) \subseteq \varphi_J(J)$. First note
that since $F$ is a morphism of Heyting algebras then $F$ is also
a morphism of Hilbert algebras, so $F(\langle
\eta_{GH}\rangle_{\PHilz}) = \langle
F(\eta_{GH})\rangle_{\PHilz}$. Taking into account that $\varphi_J
\circ \alpha = \alpha^* \circ \varphi_H$, $\varphi_J \circ \beta =
\beta^* \circ \varphi_G$, (\ref{cop1}) and (\ref{cop2}) we obtain
that
\[
\begin{array}
[c]{lllll}
 F(\eta_{GH}) & = & F(i_H(\varphi_H(H))) \cup F(i_G(\varphi_G(G)))  &  & \\
 & = & (F\circ i_H)(\varphi_H(H)) \cup (F\circ i_G)(\varphi_G(G))&  &\\
 & = & (\alpha^* \circ \varphi_H)(H) \cup (\beta^* \circ \varphi_G)(G)&  &\\
 & = & \varphi_J(\alpha(H)) \cup \varphi_J(\beta(G))&  &\\
 & = & \varphi_J(\alpha(H)\cup \beta(G))&  &\\
 & \subseteq & \varphi_J(J). &  &
\end{array}
\]
Thus, $F(\langle \eta_{GH}\rangle_{\PHilz}) = \langle
F(\eta_{GH})\rangle_{\PHilz} \subseteq \langle
\varphi_J(J)\rangle_{\PHilz} = \varphi_J(J)$, i.e., $F(\langle
\eta_{GH}\rangle) \subseteq \varphi_J(J)$, which was our aim.

We also write $\varphi_J:J\ra \varphi_J(J)$. Since $\varphi_J$ is
an isomorphism in $\PHilz$, we define $f:\langle
\eta_{GH}\rangle_{\PHilz} \ra J$ by $f = \varphi_{J}^{-1} \circ
F$. We will prove that $f$ is the unique morphism in $\PHilz$ such
that the following diagram commutes:
$$
\xymatrix{
& J  & \\
H \ar[ur]_{\alpha} \ar[dr]_{j_H} & & G \ar[ul]^{\beta} \ar[dl]^{j_G}\\
& \langle \eta_{GH}\rangle_{\PHilz} \ar[uu]^{f} & }
$$
In order to show it, note that

\vspace{3pt}
\[
\begin{array}
[c]{lllll}
f \circ j_H & = & \varphi_{J}^{-1} \circ F \circ i_H \circ \varphi_H  &  & \\
 & = & \varphi_{J}^{-1} \circ \alpha^* \circ \varphi_H &  &\\
 & = & \varphi_{J}^{-1} \circ \varphi_J \circ \alpha&  &\\
 & = & \alpha. &  &
\end{array}
\]
\vspace{3pt}

Thus, $f \circ j_H = \alpha$. Similarly it can be proved that
$f\circ j_G = \beta$. Finally, let $g:\langle
\eta_{GH}\rangle_{\PHilz} \ra J$ be a morphism in $\PHilz$ such
that $g \circ j_H = \alpha$ and $g\circ j_G = \beta$. Since $f = g
$ in $\eta_{GH}$ then $f = g$ in $\langle
\eta_{GH}\rangle_{\PHilz}$. Therefore, $H\cPHilz G \cong \langle
\eta_{GH}\rangle_{\PHilz}$.
\end{proof}

Let $\Pos$ be the category of posets and $\Esa$ the category of
Esakia spaces. We assume that the reader is familiar with the
theory of Esakia spaces and the fact that there exists a
categorial equivalence between the algebraic category of Heyting
algebras and the category whose objects are Esakia spaces and
whose morphisms $f:(X,\leq,\tau) \ra (Y,\sigma,\leq)$ are
continuos order-preserving maps which satisfy that $f^{-1}([x)) =
[f^{-1}(\{x\}))$ for every $x\in X$ \cite{Es,Mor} (see also
\cite{Pr1,Pr2}). In particular, there exists a categorical
equivalence btween $\PHey$ and $\Esap$, where $\Esap$ is the full
subcategory of $\Esa$ whose objects are root systems.

It is part of the folklore of prelinear Heyting algebras that the
coproduct of finite algebras in $\PHey$ is finite. This fact will
be used in order to show that the coproduct of finite algebras in
$\PHilz$ is also finite.

\begin{lem} \label{lemfin}
Let $H,G$ be finite algebras in $\PHilz$. Then the algebra $H
\cPHilz G$ is finite.
\end{lem}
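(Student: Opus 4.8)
The plan is to reduce the finiteness of $H \cPHilz G$ to the finiteness of the coproduct in $\PHey$, using the explicit description $H \cPHilz G \cong \langle \eta_{GH}\rangle_{\PHilz}$ established in the previous proposition together with the fact, just recalled, that the coproduct of finite algebras in $\PHey$ is finite.

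First I would observe that since $H$ and $G$ are finite algebras in $\PHilz$, Proposition \ref{SH} gives $H^* = S(H)$ and $G^* = S(G)$, and these are finite: indeed $S(H)$ consists of finite intersections $\varphi(a_1) \cap \cdots \cap \varphi(a_n)$ with $a_i \in H$, and since $H$ is finite there are only finitely many such subsets of the finite set $X(H)$. Thus $H^*$ and $G^*$ are finite algebras in $\PHey$. Applying the folklore fact that the coproduct of finite algebras in $\PHey$ is finite, we conclude that $H^* \cPHey G^*$ is a finite Heyting algebra.

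Next I would use the identification $H \cPHilz G \cong \langle \eta_{GH}\rangle_{\PHilz}$. By definition $\langle \eta_{GH}\rangle_{\PHilz}$ is the Hilbert subalgebra of $H^* \cPHey G^*$ generated by the subset $\eta_{GH} = i_H(\varphi_H(H)) \cup i_G(\varphi_G(G))$. Since $H^* \cPHey G^*$ is finite, every subalgebra of it is finite as well; in particular $\langle \eta_{GH}\rangle_{\PHilz}$ is finite. Therefore $H \cPHilz G$ is finite.

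I do not expect a genuine obstacle here, since all the heavy lifting has been done in the preceding proposition and in Proposition \ref{SH}. The only point requiring mild care is the verification that $H^*$ is finite when $H$ is finite; this follows immediately from the explicit description of $S(H)$ as the collection of finite meets of sets $\varphi(a)$ inside the finite power set of $X(H)$, so that $S(H)$ is a subset of a finite set. The remaining steps — stability of finiteness under passing to a subalgebra, and invoking the known finiteness of coproducts in $\PHey$ — are entirely routine.
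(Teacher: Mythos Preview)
Your argument is correct and follows essentially the same strategy as the paper: embed $H\cPHilz G$ into the finite algebra $H^*\cPHey G^*$ and conclude. The only difference is in how the embedding is produced. You invoke the explicit description $H\cPHilz G \cong \langle \eta_{GH}\rangle_{\PHilz}$ from the preceding proposition, which exhibits the coproduct directly as a subalgebra of $H^*\cPHey G^*$. The paper instead uses the injective unit $\psi:H\cPHilz G\to \U((H\cPHilz G)^*)$ together with the fact that the left adjoint $(\ )^*$ preserves coproducts (Theorem~\ref{pteo}), giving $(H\cPHilz G)^*\cong H^*\cPHey G^*$. Your route is more concrete and reuses the work just done; the paper's route is slightly more categorical and does not depend on the explicit subalgebra description. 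Either way the substance is the same.
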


\begin{proof}
Consider the monomorphism $i: H\cPHilz G \ra \U((H\cPHilz G)^*)$
in $\PHilz$ given by $i(x) = x$. Taking into account Theorem
\ref{pteo} we have that $(H\cPHilz G)^* \cong H^* \cPHey G^*$ in
$\PHey$, so there exists a monomorphism
\[
j:H\cPHilz G \ra \U(H^* \cPHey G^*)
\]
in $\PHilz$. Since $H^*$ and $G^*$ are finite then $H^* \cPHey
G^*$ is finite. Therefore, $H\cPHilz G$ is finite.
\end{proof}

\begin{lem} \label{rp1}
Let $H,G$ be finite algebras in $\PHilz$. Then there exists an
order isomorphism between $\X(H) \prod_{\PHSz} \X(G)$ and $\X(H^*)
\prod_{\Esap} \X(G^*)$.
\end{lem}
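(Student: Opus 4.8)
The plan is to transport both products to the algebraic side, where the adjunction of Theorem~\ref{pteo} together with Proposition~\ref{finiteH} lets me compare them. The key observation is that the dualities involved are contravariant, so they carry coproducts to products. Concretely, since $\X$ and $\D$ form a dual equivalence (Theorem~\ref{rh}), restricted to the subcategories of Corollary~\ref{corrh} the functor $\X$ sends the coproduct $H\cPHilz G$, computed in $\PHilz$, to the product $\X(H)\,{\textstyle\prod_{\PHSz}}\,\X(G)$, up to order isomorphism; and by the (likewise contravariant) equivalence between $\PHey$ and $\Esap$ discussed above, $\X$ sends the coproduct $H^*\cPHey G^*$, computed in $\PHey$, to the product $\X(H^*)\,{\textstyle\prod_{\Esap}}\,\X(G^*)$. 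Hence it suffices to exhibit an order isomorphism between $\X(H\cPHilz G)$ and $\X(H^*\cPHey G^*)$.

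I would assemble this isomorphism from three facts, all available above. First, $H$ and $G$ finite give, by Lemma~\ref{lemfin}, that $H\cPHilz G$ is finite, so Proposition~\ref{finiteH} applied to $K=H\cPHilz G$ yields an order isomorphism $\X(K^*)\cong\X(K)$, that is $\X((H\cPHilz G)^*)\cong\X(H\cPHilz G)$. Second, because $\f:\PHilz\ra\PHey$ is a left adjoint (Theorem~\ref{pteo}) it preserves coproducts, so $(H\cPHilz G)^*\cong H^*\cPHey G^*$ in $\PHey$ --- precisely the isomorphism already used in the proof of Lemma~\ref{lemfin}. Third, applying $\X$ to this last isomorphism gives $\X((H\cPHilz G)^*)\cong\X(H^*\cPHey G^*)$ as objects of $\Esap$, in particular as posets. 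Chaining the links produces
\[
\X(H)\,{\textstyle\prod_{\PHSz}}\,\X(G)\;\cong\;\X(H\cPHilz G)\;\cong\;\X((H\cPHilz G)^*)\;\cong\;\X(H^*\cPHey G^*)\;\cong\;\X(H^*)\,{\textstyle\prod_{\Esap}}\,\X(G^*),
\]
which is the desired order isomorphism.

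I expect the genuine content to be bookkeeping rather than any single hard step: I must verify that each link descends to an order isomorphism of the underlying posets, not merely to an isomorphism of the richer dual structures, and that the coproduct-to-product correspondence holds at the level of posets. The one point deserving care is the double role of the symbol $\X(H^*)$ --- as the space of irreducible implicative filters of the Hilbert reduct of $H^*$ and as the Esakia space of prime filters of the Heyting algebra $H^*$. These agree because in a Heyting algebra the irreducible implicative filters are exactly the prime filters and both orders are inclusion (Remark~\ref{dso}); this identification is in fact already built into the statement of Proposition~\ref{finiteH}, so once it is acknowledged the remaining verifications are routine.
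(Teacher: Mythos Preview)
Your proposal is correct and follows essentially the same route as the paper: both arguments chain the isomorphisms $\X(H)\prod_{\PHSz}\X(G)\cong\X(H\cPHilz G)\cong\X((H\cPHilz G)^*)\cong\X(H^*)\prod_{\Esap}\X(G^*)$, invoking Corollary~\ref{corrh}, Lemma~\ref{lemfin} with Proposition~\ref{finiteH}, and Theorem~\ref{pteo} in turn. Your remarks about descending to the poset level and about the two readings of $\X(H^*)$ are well taken and make explicit what the paper leaves implicit.
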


\begin{proof}
By Corollary \ref{corrh} we have that
\[
\X(H) \prod_{\PHSz} \X(G) \cong \X(H\cPHilz G)
\]
in $\HS$, so it follows from \cite[Theorem 3.2]{CCM} that
\begin{equation}\label{c0}
\X(H) \prod_{\PHSz} \X(G) \cong \X(H\cPHilz G)\;\text{in}\;\Pos.
\end{equation}
Since $H$ and $G$ are finite then by Lemma \ref{lemfin} we have
that $H\cPHilz G$ is finite. Thus, it follows from Proposition
\ref{finiteH} that
\begin{equation} \label{c1}
\X(H\cPHilz G) \cong \X((H\cPHilz G)^{*})\;\text{in}\; \Pos.
\end{equation}

By Theorem \ref{pteo} we have that $(H\cPHilz G)^* \cong H^*
\cPHey G^*$ in $\PHey$. Thus, we obtain that $\X((H\cPHilz G)^*)
\cong \X(H^*) \prod_{\Esap} \X(G^*)$ in $\Esap$. In consequence,
\begin{equation} \label{c2}
\X((H\cPHilz G)^*) \cong \X(H^*) \prod_{\Esap}
\X(G^*)\;\text{in}\;\Pos.
\end{equation}

Therefore, it follows from (\ref{c0}), (\ref{c1}) and (\ref{c2})
that there is an order isomorphism between $\X(H) \prod_{\PHSz}
\X(G)$ and $\X(H) \prod_{\Esap} \X(G)$.
\end{proof}

Let $H$ be a finite algebra of $\PHilz$. Recall that $\KA_H$
denotes the associated base to the $H$-space $\X(H)$. We will
prove that $\X(H^*) = \X(\D(\X(H)))$. In order to show it, first
recall that it was proved in \cite{CCM} that the map
$\epsilon:\X(H) \ra \X(\D(\X(H)))$ given by $\epsilon(x) = \{U\in
\D(X): x\in U\}$ is an order isomorphism. Then the map $\epsilon
\circ \eta: \X(H^*)\ra \X(\D(\X(H)))$ is an order isomorphism too,
where $\eta:\X(H^*) \ra \X(H)$ is the order isomorphism given in
Proposition \ref{finiteH}. Moreover,
\[
\begin{array}
[c]{lllll}
\epsilon(\eta(P)) & = & \{U\in \D(\X(H)): \eta(P) \in U\}  &  & \\
 & = & \{\varphi(a): \varphi^{-1}(P) \in \varphi(a)\} &  &\\
 & = & \{\varphi(a): a\in \varphi^{-1}(P)\}&  &\\
 & = & \{\varphi(a): \varphi(a)\in P\}&  &\\
 & = & P. &  &
\end{array}
\]
Thus, $\epsilon \circ \eta$ is the identity function. Hence,
\[
\X(H^*) = \X(\D(\X(H))).
\]
Since $(\X(\D(\X(H))),\subseteq, \KA_{\D(\X(H))})\in \PHSz$ then
$(\X(H^*),\subseteq, \KA_{\D(\X(H))}) \in \PHSz$. We denote by
$\X(H^*)^{\dag}$ to this object of $\PHSz$.

Therefore, if $H$ is a finite algebra of $\PHilz$ then
$\X(H^*)^{\dag} \in \PHSz$. Let $\eta:\X(H^*) \ra \X(H)$ be the
order isomorphism given in Proposition \ref{finiteH}. Note that
the relation $\eta^{R}\subseteq \X(H^*) \times \X(H)$ is given by
\[
(P,Q) \in \eta^{R}\;\text{if and only if}\;\eta(P) \subseteq Q.
\]

\begin{prop}
Let $H$ and $G$ be finite algebras of $\PHilz$. Then
\begin{enumerate}[\normalfont a)]
\item $U\in\KA_H$ if and only if $\eta^{-1}(U) \in
\KA_{\D(\X(H))}$. \item $\eta^{R}$ is an isomorphism in $\PHSz$.
\item $\X(H) \prod_{\PHSz} \X(G) \cong \X(H^*)^{\dag}
\prod_{\PHSz} \X(G^*)^{\dag}$ in $\PHSz$.
\end{enumerate}
\end{prop}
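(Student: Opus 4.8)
The plan is to prove (a) by directly unravelling the definitions, to deduce (b) from (a) by checking that $\eta^{R}$ is an $H$-functional relation meeting the $\PHSz$ conditions, and then to obtain (c) as a purely formal consequence of (b). For (a), first note that $D(\X(H)) = \{U : U^{c}\in \KA_{H}\} = \varphi(H)$, so the underlying Hilbert algebra of $\D(\X(H))$ is $\varphi(H)$ and a typical member of $\KA_{\D(\X(H))}$ has the form $\{P\in \X(H^{*}) : \varphi(a)\notin P\}$ for some $a\in H$. On the other hand a typical member of $\KA_{H}$ is $\varphi(a)^{c}$, and since $\eta(P) = \varphi^{-1}(P)$ we compute
\[
\eta^{-1}(\varphi(a)^{c}) = \{P\in \X(H^{*}) : a\notin \varphi^{-1}(P)\} = \{P\in\X(H^{*}) : \varphi(a)\notin P\},
\]
which is exactly that member of $\KA_{\D(\X(H))}$. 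Because every element of $\KA_{H}$ is of the form $\varphi(a)^{c}$ and $\eta$ is a bijection, this yields the equivalence in (a).

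For (b) I would verify the defining conditions one at a time. Writing $\eta^{R}(P) = \{Q\in\X(H) : \eta(P)\subseteq Q\} = [\eta(P))$, condition $\mathrm{(HR2)}$ holds because $[\eta(P))$ is the closure of $\{\eta(P)\}$ in the $\pd$-order, and the $\PHSz$ nonemptiness condition holds since $\eta(P)\in \eta^{R}(P)$. For $\mathrm{(HR1)}$, observe that each $U\in\KA_{H}$ is a downset for $\subseteq$, so $[\eta(P))\cap U\neq\emptyset$ forces $\eta(P)\in U$; hence $(\eta^{R})^{-1}(U) = \eta^{-1}(U)$, which lies in $\KA_{\D(\X(H))}$ by part (a). Condition $\mathrm{(HF)}$ is checked by taking $z = \eta^{-1}(Q)$ whenever $(P,Q)\in\eta^{R}$: since $\eta$ is an order isomorphism, $\eta(P)\subseteq Q$ gives $P\subseteq z$ and $\eta^{R}(z) = [Q)$. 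Finally, because $\eta = \epsilon^{-1}$ (as $\epsilon\circ\eta$ is the identity) and $\epsilon^{R}$ is already known to be an isomorphism in $\HS$, I expect $\epsilon^{R}$ to be the inverse of $\eta^{R}$; verifying that the composites $\eta^{R}\circ\epsilon^{R}$ and $\epsilon^{R}\circ\eta^{R}$ are the identity morphisms then places $\eta^{R}$ among the isomorphisms of $\PHSz$.

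Part (c) is then formal. By (b), $\eta^{R}\colon \X(H^{*})^{\dag}\to\X(H)$ and the analogous relation $\X(G^{*})^{\dag}\to\X(G)$ are isomorphisms in $\PHSz$. Since $\PHSz$ is equivalent to $\PHilz$ (Corollary \ref{corrh}), which possesses binary coproducts, $\PHSz$ possesses the binary products appearing in the statement; isomorphic objects have isomorphic products, so $\X(H^{*})^{\dag}\prod_{\PHSz}\X(G^{*})^{\dag}\cong \X(H)\prod_{\PHSz}\X(G)$ in $\PHSz$.

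I expect the main obstacle to be the last step of (b): confirming that $\eta^{R}$ is a genuine categorical isomorphism, that is, that $\epsilon^{R}$ is a two-sided inverse under relation composition against the correct identity morphisms of $\HS$ (which are the $\pd$-relations, not the diagonals), rather than merely a bijective order-preserving relation. Everything else reduces to bookkeeping with the definitions and part (a), but pinning the composites down cleanly is where I would lean hardest on the established properties of $\epsilon^{R}$.
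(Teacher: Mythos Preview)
Your proposal is correct and follows essentially the same route as the paper. For part (a) your computation is identical to the paper's, and for (c) the paper leaves the argument implicit, just as you indicate it is formal.

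The only noticeable difference is in part (b). Where you verify $\mathrm{(HR1)}$, $\mathrm{(HR2)}$, $\mathrm{(HF)}$ and the nonemptiness condition by hand and then argue that $\epsilon^{R}$ is a two-sided inverse of $\eta^{R}$, the paper instead invokes \cite[Lemma~3.2]{CCM} and \cite[Theorem~3.2]{CCM}: the first gives a criterion (in terms of preimages of basic sets under $f$) for $f^{R}$ to be an $H$-relation, and the second gives a criterion for $f^{R}$ to be an isomorphism in $\HS$. Both criteria reduce precisely to the condition in (a), together with the fact that $\eta$ is an order isomorphism. So the step you flag as the ``main obstacle'' --- confirming that $\eta^{R}$ is a categorical isomorphism rather than merely a bijective order-preserving relation --- is exactly what the paper dispatches by citing \cite[Theorem~3.2]{CCM}; your plan to exhibit $\epsilon^{R}$ as the explicit inverse is an equally valid, more self-contained alternative.
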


\begin{proof}
a) Let $U\in \KA_H$, so there exists $a\in H$ such that $U =
\varphi_{H}(a)$. We will prove that
\[
\eta^{-1}(U) = (\varphi_{\D(\X(H))}(\varphi_{H}(a)))^c.
\]
Indeed,
\[
\begin{array}
[c]{lllll}
\eta^{-1}(U) & = & \eta^{-1}(\varphi_{H}(a)^c)  &  & \\
 & = & \{P\in \X(H^*): \eta(P)\in \varphi_{H}(a)^c\} &  &\\
 & = & \{P\in \X(H^*): \varphi_{H}^{-1}(P)\in \varphi_{H}(a)^c\}&  &\\
 & = & \{P\in \X(H^*): a\notin \varphi_{H}^{-1}(P) \}&  &\\
 & = & \{P\in \X(H^*): \varphi_{H}(a)\notin P \} &  &\\
 & = & \{P\in \X(\D(\X(H))): \varphi_{H}(a)\notin P \} &  &\\
 & = & (\varphi_{\D(\X(H))}(\varphi_{H}(a)))^c. &  &
\end{array}
\]
Since $\eta^{-1}(U) = (\varphi_{\D(\X(H))}(\varphi_{H}(a)))^c$
then $\eta^{-1}(U) \in \KA_{\D(\X(H))}$.

Conversely, suppose that $\eta^{-1}(U) \in \KA_{\D(\X(H))}$. Then
there exists $a\in H$ such that $\eta^{-1}(U) =
(\varphi_{\D(\X(H))}(\varphi_{H}(a)))^c$. Our aim is to show that
$U = \varphi_{H}(a)^c$. First note that $P\in \X(H)$ if and only
if there exists $Q_P\in \X(H^*)$ such that $P = \eta(Q_P)$. Thus,
$P\in U$ if and only if $Q_P \in \eta^{-1}(U)$, i.e.,
$\varphi_{H}(a) \notin Q_P$, which is equivalent to $a\notin
\varphi_{H}^{-1}(Q_P)$. But $\varphi_{H}^{-1}(Q_P) = \eta(Q_P) =
P$. Hence, $P\in U$ if and only if $P\notin \varphi_{H}(a)$. Then
$U = \varphi_{H}(a)^c$, so $U\in \KA_H$.

b) It follows from item a) and \cite[Lemma 3.2]{CCM} that
$\eta^{R}$ is an $H$-relation from the $H$-space $\X(H^*)^{\dag}$
to the $H$-space $\X(H)$. Moreover, since $\eta$ is an order
isomorphism it can be proved that $\eta^{R}$ is an $H$-functional
relation, so it is a morphism in $\HS$. Finally, the fact that
$\eta^{R}$ is an isomorphism in $\HS$ follows again from item a)
and \cite[Theorem 3.2]{CCM}. Therefore it follows from
straightforward computations that $\eta^{R}$ is an isomorphism in
$\HS$.
\end{proof}

\begin{lem} \label{finHS}
Let $(X,\tau)$ be a finite topological space and $\KA$ a family of
subsets of $X$.
\begin{enumerate} [\normalfont a)]
\item If $(X,\tau,\KA)$ is an $H$-space then $\tau$ is the set of
downsets of $(X,\pd)$. \item $(X,\tau,\KA)$ is an $H$-space if and
only if $(X,\tau)$ is $T_0$ and $\KA$ is a base of $(X,\tau)$ such
that $\sat(B_1 \cap B_{2}^c) \in \KA$ for every $B_1, B_2 \in
\KA$.
\end{enumerate}
\end{lem}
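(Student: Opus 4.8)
The plan is to treat the two items separately, using throughout the basic feature of a finite topological space: every point $x$ has a smallest open neighbourhood $U_x = \bigcap\{U\in\tau : x\in U\}$, and $y\in U_x$ if and only if $x\in\overline{\{y\}}$, so that $U_x = (x]$ in the order $\pd$.

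\textbf{Item a).} Since an $H$-space is sober by (H3), it is $T_0$ by Remark \ref{rimp}, hence $\pd$ is a genuine partial order. First I would recall the general fact that every open set is upward closed for the specialization order $\leq$; as $\pd$ is the dual of $\leq$, this says that every $U\in\tau$ is a downset of $(X,\pd)$. For the reverse inclusion I would invoke finiteness: if $A$ is a downset of $(X,\pd)$ then $A = \bigcup_{x\in A}(x] = \bigcup_{x\in A} U_x$ is a union of minimal open neighbourhoods, hence open. Combining the two inclusions shows that $\tau$ is exactly the family of downsets of $(X,\pd)$.

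\textbf{Item b), direction ($\Rightarrow$).} This is immediate. If $(X,\tau,\KA)$ is an $H$-space then $(X,\tau)$ is sober by (H3), hence $T_0$ by Remark \ref{rimp}; condition (H1) gives that $\KA$ is a base; and (H2), rewritten as in Remark \ref{rimp}, is precisely the requirement that $\sat(B_1\cap B_2^c)\in\KA$ for all $B_1,B_2\in\KA$.

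\textbf{Item b), direction ($\Leftarrow$).} Assume $(X,\tau)$ is $T_0$, $\KA$ is a base, and $\sat(B_1\cap B_2^c)\in\KA$ for all $B_1,B_2\in\KA$; I must verify (H1)--(H3). For (H1), the members of $\KA$ are open because $\KA$ is a base, and they are compact because every subset of a finite space is compact, so $\KA$ is a base of open compact sets. Condition (H2) is exactly the hypothesis on $\KA$. The substantive point is (H3), namely that a finite $T_0$ space is sober: given an irreducible closed set $Y\neq\emptyset$, write $Y=\{y_1,\dots,y_n\}$ and note $Y = \overline{\{y_1\}}\cup\cdots\cup\overline{\{y_n\}}$, each $\overline{\{y_i\}}$ being closed and contained in $Y$. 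Extending irreducibility to finite unions by induction forces $Y\subseteq\overline{\{y_i\}}$ for some $i$, whence $Y=\overline{\{y_i\}}$; uniqueness of the generic point follows from $T_0$, since $\overline{\{x\}}=\overline{\{y\}}$ implies $x=y$. Thus (H3) holds and $(X,\tau,\KA)$ is an $H$-space. The main obstacle is this soberness step, where finiteness and $T_0$ together must be shown to make every irreducible closed set the closure of a unique point; everything else reduces to finite-space bookkeeping with minimal open neighbourhoods and to re-reading the axioms of Definition \ref{Hs} through Remark \ref{rimp}.
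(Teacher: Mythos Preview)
Your argument is correct. For part b) you and the paper proceed identically: the forward direction is read off from the axioms, and for the converse you check (H1)--(H3), with soberness coming from the standard fact that a finite $T_0$ space is sober. You supply the details of that soberness argument, whereas the paper simply asserts it.

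For part a), however, your route genuinely differs from the paper's. The paper exploits the duality: it transports the question along the homeomorphism $\epsilon: X \to \X(\D(X))$, so that it suffices to check that in $\X(H)$ (for $H$ a finite Hilbert algebra) the open sets are exactly the downsets; the nontrivial inclusion then follows from the formula $U = \bigcup_{i}\bigcap_{j}\varphi(a_{ij})$ established in the preceding proposition, which shows every upset is closed. Your argument is instead purely point-set and self-contained: you use that in a finite space each point $x$ has a minimal open neighbourhood $U_x$, identify $U_x$ with $(x]$ via the specialization order, and conclude that every downset is a union of such $U_x$'s. Your approach is more elementary and does not depend on the duality machinery or on the specific description of upsets in $\X(H)^+$; the paper's approach, by contrast, ties the lemma back into the representation theory already developed, at the cost of relying on earlier results.
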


\begin{proof}
a) We know that there is an order isomorphism and a homemorphism
between $(X,\tau,\KA)$ and $\X(\D(X))$. Then it is enough to prove
that given a Hilbert algebra $H$ we have that $U$ is an open in
$\X(H)$ if and only if $U$ is a downset in $\X(H)$. Let $U$ be a
downset in $\X(H)$. Then $U^{c}\in \X(H)^+$. Thus it follows from
(\ref{finite}) that $U^c$ is a closed set of $\X(H)$, i.e., $U$ is
an open of $\X(H)$. It is immediate the fact that if $U$ is an
open in $\X(H)$ then $U$ is a downset in $\X(H)$.

b) Suppose that $\KA$ is a base of $(X,\tau)$ such that $\sat(B_1
\cap B_{2}^c) \in \KA$ for every $B_1, B_2 \in \KA$. Since $X$ is
finite then the elements of $\KA$ are compact. Hence, $\KA$ is a
base of open compact sets of $(X,\tau)$. Since $X$ is finite and
$(X,\tau)$ is $T_0$ then $(X,\tau)$ is sober.
\end{proof}

Let $(X,\tau_X,\KA_X)$ be a finite $H$-space and $(Y,\leq)$ a
finite poset. Suppose that there exists an order isomorphism
$i:(X,\pd) \ra (Y,\leq)$. Define the family $\KA_Y: = \{i(B):B\in
\KA_X\}$, where $i(B) :=\{i(x):x\in B\}$. Also write $\tau_Y$ for
the family of downsets of $(Y,\leq)$.

\begin{lem} \label{ctp}
The structure $(Y,\tau_Y,\KA_Y)$ is an $H$-space and
$(X,\tau_X,\KA_X) \cong (Y,\tau_Y,\KA_Y)$ in $\HS$. Moreover, if
$(X,\tau_X,\KA_X) \in \PHSz$ then $(Y,\tau_Y,\KA_Y) \in \PHSz$ and
$(X,\tau_X,\KA_X) \cong (Y,\tau_Y,\KA_Y)$ in $\PHSz$.
\end{lem}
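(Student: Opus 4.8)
The plan is to transport the entire $H$-space structure along the order isomorphism $i$ by means of the finite criterion in Lemma \ref{finHS}, and then to realize both isomorphisms through the relation induced by $i$, exactly as the text realizes $\epsilon^{R}$. First I would check that $i$ is a homeomorphism from $(X,\tau_X)$ onto $(Y,\tau_Y)$. By Lemma \ref{finHS} a) the topology $\tau_X$ is precisely the family of downsets of $(X,\pd)$, while by construction $\tau_Y$ is the family of downsets of $(Y,\leq)$; since an order isomorphism sends downsets to downsets in both directions, $i$ is a homeomorphism. I would also note that on $(Y,\tau_Y)$ the dual specialization order coincides with $\leq$, so that $i$ is an order isomorphism for the orders $\pd$ relevant to both $H$-spaces, and that, since $(X,\tau_X)$ is sober hence $T_0$, its homeomorphic image $(Y,\tau_Y)$ is $T_0$ as well.

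Next I would verify, via Lemma \ref{finHS} b), that $(Y,\tau_Y,\KA_Y)$ is an $H$-space. The $T_0$ condition has just been established, and $\KA_Y = i(\KA_X)$ is a base of $\tau_Y$ because $i$ is a homeomorphism and $\KA_X$ is a base of $\tau_X$. The only genuine computation is the closure condition $\sat(B_1\cap B_2^{c})\in\KA_Y$ for $B_1,B_2\in\KA_Y$. Writing $B_1=i(A_1)$ and $B_2=i(A_2)$ with $A_1,A_2\in\KA_X$, I would use that $i$ is a bijection, so $i(A_2)^{c}=i(A_2^{c})$ and $i(A_1)\cap i(A_2)^{c}=i(A_1\cap A_2^{c})$, together with the identification of $\sat$ with the order-theoretic down-closure in a finite $T_0$ space (the remark preceding Definition \ref{Hs}); since an order isomorphism commutes with down-closure, this yields $\sat(i(A_1)\cap i(A_2)^{c})=i(\sat(A_1\cap A_2^{c}))$, which belongs to $\KA_Y$ because $\sat(A_1\cap A_2^{c})\in\KA_X$. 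Thus $(Y,\tau_Y,\KA_Y)$ is an $H$-space.

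To obtain the isomorphism I would pass to the relation $i^{R}$, given by $(x,y)\in i^{R}$ if and only if $i(x)\pd y$. Since $i$ is simultaneously an order isomorphism and a homeomorphism between two $H$-spaces, the argument already used in the text for $\epsilon^{R}$ (see \cite{CCM}) shows that $i^{R}$ is an $H$-functional relation and an isomorphism in $\HS$, with inverse induced by $i^{-1}$. For the last assertion, assume $(X,\tau_X,\KA_X)\in\PHSz$. Then $(Y,\leq)$ is a root system, because the property of $[x)$ being a chain is preserved by the order isomorphism $i$, and $Y=i(X)\in\KA_Y$ since $X\in\KA_X$; hence $(Y,\tau_Y,\KA_Y)\in\PHSz$. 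As $\PHSz$ is a full subcategory of $\HS$, it only remains to see that $i^{R}$ and its inverse satisfy the non-emptiness condition defining $\PHSz$-morphisms, which is immediate since $i^{R}(x)=[i(x))\neq\emptyset$ for every $x$. Therefore $i^{R}$ witnesses $(X,\tau_X,\KA_X)\cong(Y,\tau_Y,\KA_Y)$ in $\PHSz$.

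I expect the closure condition $\sat(B_1\cap B_2^{c})\in\KA_Y$ to be the main point requiring care, precisely because it mixes complementation with saturation; once $\sat$ is identified with the order-theoretic down-closure and the complement is pushed through the bijection $i$, the remaining verifications are routine transports of structure along the order isomorphism.
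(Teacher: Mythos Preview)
Your proposal is correct and follows essentially the same route as the paper: both verify that $(Y,\tau_Y,\KA_Y)$ is an $H$-space via the finite criterion of Lemma~\ref{finHS} transported along the order isomorphism $i$, and both realize the isomorphism through the relation $i^{R}$. The paper is more specific in invoking \cite[Lemma~3.2]{CCM} and \cite[Theorem~3.2]{CCM} (using the characterization $i(B)\in\KA_Y$ iff $i^{-1}(i(B))\in\KA_X$) where you appeal to the $\epsilon^{R}$ analogy, and it leaves as ``straightforward computations'' the saturation check and the $\PHSz$ verification that you spell out; note also that $\PHSz$ is not literally a full subcategory of $\HS$ (its morphisms carry the extra non-emptiness condition), but since you verify that condition anyway the argument stands.
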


\begin{proof}
By a) of Lemma \ref{finHS} we have that $\tau$ is the set of
downsets of $(X,\pd)$. Thus, straightforward computations based in
b) of Lemma \ref{finHS} and the fact that $i:(X,\pd) \ra (Y,\leq)$
is an order isomorphism show that $(Y,\tau_Y,\KA_Y)$ is an
$H$-space.

Now we will prove that $(X,\tau_X,\KA_X) \cong (Y,\tau_Y,\KA_Y)$
in $\HS$. Note that if $B\in \KA_X$ then $i^{-1}(i(B)) = B$.
Hence, $i(B) \in \KA_Y$ if and only if $i^{-1}(i(B)) \in \KA_X$.
By \cite[Lemma 3.2]{CCM} we have that $i^{R}$ is an $H$-relation,
so by \cite[Theorem 3.2]{CCM} we have that $(X,\tau_X,\KA_X) \cong
(Y,\tau_Y,\KA_Y)$ in the category of $H$-spaces and $H$-relations.
Since $i$ is an order isomorphism straightforward computations
prove that $(X,\tau_X,\KA_X) \cong (Y,\tau_Y,\KA_Y)$ in $\HS$. The
rest of the proof follows from straightforward computations.
\end{proof}

\begin{thm}
Let $H, G$ finite algebras in $\PHilz$. Then there exists $\tau$
and $\KA$ families of subsets of $\X(H^*) \prod_{\Esap} \X(G^*)$
such that
\[
P(X,Y): = (\X(H^*) \prod_{\Esap} \X(G^*),\tau,\KA) \in \PHSz
\]
and $\X(H) \prod_{\PHSz} \X(G) \cong P(X,Y)$ in $\PHSz$.
\end{thm}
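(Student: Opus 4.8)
The plan is to combine the two order isomorphisms already established with the transport lemma, Lemma \ref{ctp}, to move the root-system $H$-space structure from the abstract Hilbert side onto the concrete Heyting side. Concretely, I would start from the chain of isomorphisms in Lemma \ref{rp1}, which gives an order isomorphism between $\X(H) \prod_{\PHSz} \X(G)$ and $\X(H^*) \prod_{\Esap} \X(G^*)$ as posets. The left-hand object $\X(H) \prod_{\PHSz} \X(G)$ already lives in $\PHSz$, since $\PHSz$ has finite products (it is equivalent to $\PHilz$ by Corollary \ref{corrh}, and $\PHilz$ has coproducts of finite algebras by the previous results), and by Lemma \ref{lemfin} this object is finite. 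So I take $(X,\tau_X,\KA_X) := \X(H) \prod_{\PHSz} \X(G)$, a finite object of $\PHSz$, and I take $(Y,\leq) := \X(H^*) \prod_{\Esap} \X(G^*)$ together with the order isomorphism $i:(X,\pd)\ra(Y,\leq)$ supplied by Lemma \ref{rp1}.

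Next I would simply invoke Lemma \ref{ctp} with this data. That lemma manufactures, out of a finite $H$-space in $\PHSz$ and an order isomorphism onto a finite poset, the transported topology $\tau_Y$ (the downsets of $(Y,\leq)$) and the transported family $\KA_Y := \{i(B): B\in\KA_X\}$, and it asserts both that $(Y,\tau_Y,\KA_Y)\in\PHSz$ and that $(X,\tau_X,\KA_X) \cong (Y,\tau_Y,\KA_Y)$ in $\PHSz$. Setting $\tau := \tau_Y$ and $\KA := \KA_Y$, I obtain exactly the desired families of subsets of $\X(H^*)\prod_{\Esap}\X(G^*)$, the membership $P(X,Y)\in\PHSz$, and the isomorphism $\X(H)\prod_{\PHSz}\X(G) \cong P(X,Y)$ in $\PHSz$. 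The only point requiring a word of justification is that the hypotheses of Lemma \ref{ctp} are met: that $X$ is a finite $H$-space lying in $\PHSz$ (from Lemma \ref{lemfin} and the equivalence of Corollary \ref{corrh}) and that the product $\X(H^*)\prod_{\Esap}\X(G^*)$ is a finite poset (inherited from finiteness of $H^*$ and $G^*$ and the fact that $\PHey$-coproducts of finite algebras are finite, together with the Esakia duality $\Esap$).

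The proof is therefore essentially a bookkeeping assembly: Lemma \ref{rp1} provides the order isomorphism, and Lemma \ref{ctp} does the genuine topological work of producing a root-system $H$-space structure on the target and certifying the isomorphism in $\PHSz$. I do not expect a substantial obstacle, since all the heavy lifting has been localized into the earlier lemmas; the delicate verification that the transported $\KA_Y$ satisfies $\sat(B_1\cap B_2^c)\in\KA_Y$ and that $(Y,\leq_d)$ is a root system is already carried out inside Lemma \ref{ctp} using part b) of Lemma \ref{finHS}. If anything is mildly subtle, it is only confirming that the specialization order $\pd$ on the constructed $(Y,\tau_Y)$ coincides with the given order $\leq$ on $Y$, so that ``root system'' is being checked against the correct order; but this too is folded into Lemma \ref{ctp}, and here it suffices to cite that lemma directly.
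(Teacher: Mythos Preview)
Your proposal is correct and follows exactly the paper's approach: the paper's proof is the single line ``It follows from lemmas \ref{rp1} and \ref{ctp},'' and you have simply unpacked what that invocation entails, supplying the finiteness checks needed to feed Lemma~\ref{rp1} and the transport Lemma~\ref{ctp}.
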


\begin{proof}
It follows from lemmas \ref{rp1} and \ref{ctp}.
\end{proof}

\section{The product in the category $\Ffin$ of finite forests and open
maps}\label{s5}

A (finite) \emph{forest} is a (finite) poset $F$ such that for
every $x\in F$ the set $(x]$ is a chain. An order preserving map
$f:F\ra G$ between forests is \emph{open} if for every $x\in F$
and $y\in G$, if $y\leq f(x)$ then there exists $z\in X$ such that
$z\leq x$ and $f(z) = y$. Notice that an order preserving map
$f:F\ra G$ between forests is open if for every $x\in X$ it holds
that $f((x]) = (f(x)]$. In this case we also have that if $x$ is
minimal in $F$ then $f(x)$ is minimal in $G$.

We write $\Ffin$ for the category of finite forests and
order-preserving open maps between them. In this section we give
an explicit description of the product of two objects in the
category $\Ffin$.

\begin{defn}
Let $F\in \Ffin$. An $u$-succession $f$ is a set
$\{f_0,\dots,f_n\}$ of elements of $F$ such that $f_0$ is minimal
in $F$ and $f_0< \dots < f_n$. We write $\US(F)$ for the set of
$u$-successions of $F$.
\end{defn}

\begin{rem}
Let $F\in \Ffin$. Let $f_0, \dots,f_n$ elements of $F$ such that
$f_0$ is minimal in $F$ and $f_0\leq f_1\leq \dots \leq f_n$. In
this case we also say that the set $\{f_0,\dots,f_n\}$ is an
$u$-succession because $\{f_0,\dots,f_n\} = \{g_0,\dots,g_m\}$ for
some $g_0,\dots, g_m \in F$ with $g_0 = f_0$ and
$g_0<g_1<\dots<g_m$.
\end{rem}

Let $F\in \Ffin$. Let $f = \{f_0,\dots,f_n\}$ and $g =
\{g_0,\dots,g_m\}$ elements of $\US(F)$. We say that $f\preceq g$
if and only $n\leq m$ and $f_i = g_i$ for every $i=1,\dots,n$. It
is immediate that $\preceq$ is an order. Moreover,
$(\US(F),\preceq) \in \Ffin$. If there is not ambiguity we write
$\US(F)$ in place of $(\US(F),\preceq)$. Note that the order
$\preceq$ defined on $\US(F)$ is exactly the same order defined on
the set of paths of an arbitrary poset. See \cite{AGM} for details
about it.

Let $S, T\in \Ffin$. We define $\pi_1: S\times T \ra S$ by
$\pi_{1}(s,t) = s$ and $\pi_2: S\times T \ra S$ by $\pi_{2}(s,t) =
t$. Let $p\in \US(S\times T)$, i.e., $p = \{p_0,\dots,p_k\}$ where
$p_0,\dots,p_k \in S\times T$, $p_0$ is minimal in $S\times T$ and
$p_0< \dots <p_k$. We also define maps $\hat{\pi_1}: \US(S\times
T) \ra \U(S)$ and  $\hat{\pi_2}: \US(S\times T) \ra \U(T)$ by
$\hat{\pi_1}(p) = \{\pi_{1}(p_0),\dots,\pi_{1}(p_k)\}$ and
$\hat{\pi_2}(p) = \{\pi_{2}(p_0),\dots,\pi_{2}(p_k)\}$. It is
clear that $\hat{\pi_1}$ and $\hat{\pi_2}$ are well defined maps.
Finally we define the set
\[
S\otimes T = \{p\in \US(S\times T): \hat{\pi_{1}}(p) =
(\pi_{1}(p_k)]\;\text{and}\; \hat{\pi_{2}}(p) = (\pi_{2}(p_k)]\}.
\]

\begin{lem}
Let $S,T\in \Ffin$. The maps $\overline{\pi_1}: S\otimes T \ra S$
and $\overline{\pi_2}: S\otimes T \ra T$ given by
$\overline{\pi_1}(p) =$ max $\hat{\pi_{1}}(p)$ and
$\overline{\pi_2}(p) =$ max $\hat{\pi_{2}}(p)$ are morphisms in
$\Ffin$.
\end{lem}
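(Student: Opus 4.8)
The plan is to verify the two defining conditions of a morphism in $\Ffin$—order preservation and openness—for $\overline{\pi_1}$, the argument for $\overline{\pi_2}$ being entirely symmetric. As a preliminary I would record that the order $\preceq$ on $\US(S\times T)$ is the prefix (initial–segment) order, so for $p=\{p_0,\dots,p_k\}$ the principal downset $(p]$ consists exactly of the prefixes $\{p_0,\dots,p_j\}$ with $0\leq j\leq k$. Using this I would first check that $S\otimes T$ is a downset of the forest $(\US(S\times T),\preceq)$, and hence itself an object of $\Ffin$: if $p\in S\otimes T$ and $q=\{p_0,\dots,p_j\}\preceq p$, then, since $\hat{\pi_1}(p)=(\pi_1(p_k)]$ is a chain, its initial segment $\{\pi_1(p_0),\dots,\pi_1(p_j)\}$ is exactly the part of that chain lying below $\pi_1(p_j)$, i.e. $\hat{\pi_1}(q)=(\pi_1(p_j)]$, and likewise $\hat{\pi_2}(q)=(\pi_2(p_j)]$, so $q\in S\otimes T$. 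A downset of a finite forest is again a finite forest, whence $S\otimes T\in\Ffin$.

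Order preservation is then immediate: if $p\preceq p'$ with $p=\{p_0,\dots,p_k\}$ and $p'=\{p'_0,\dots,p'_{k'}\}$, the prefix relation gives $p_k=p'_k$ and $k\leq k'$, so
\[
\overline{\pi_1}(p)=\pi_1(p_k)=\pi_1(p'_k)\leq\pi_1(p'_{k'})=\overline{\pi_1}(p').
\]

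For openness I would use the characterization stated in the definition of $\Ffin$, namely that an order preserving map $f$ between forests is open iff $f((x])=(f(x)]$ for every $x$. The key observation is that $\overline{\pi_1}$ sends a prefix $\{p_0,\dots,p_j\}$ of $p$ to $\pi_1(p_j)$, so that, combining this with the description of $(p]$ as the set of prefixes,
\[
\overline{\pi_1}((p])=\{\pi_1(p_j):0\leq j\leq k\}=\hat{\pi_1}(p)=(\pi_1(p_k)]=(\overline{\pi_1}(p)],
\]
where the third equality is precisely the defining clause of $S\otimes T$. This gives openness of $\overline{\pi_1}$, and the identical computation with $\pi_2$ gives openness of $\overline{\pi_2}$.

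I do not expect a genuine obstacle here, as the content is bookkeeping about $u$-successions; the one point that must not be glossed is that openness rests essentially on the defining condition $\hat{\pi_1}(p)=(\pi_1(p_k)]$. Without it, $\hat{\pi_1}(p)$ could be a proper subchain of $(\pi_1(p_k)]$ and the image $\overline{\pi_1}((p])$ would fail to be the full principal downset $(\overline{\pi_1}(p)]$, so the projection would not be open. Thus the very design of $S\otimes T$ is tailored to make the projections open, which is the feature that will drive the later identification of $S\otimes T$ with the product in $\Ffin$.
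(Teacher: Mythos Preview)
Your proof is correct and follows essentially the same route as the paper's: both arguments reduce openness of $\overline{\pi_1}$ to the defining clause $\hat{\pi_1}(p)=(\pi_1(p_k)]$, the only difference being that you use the equivalent characterization $f((x])=(f(x)]$ while the paper picks a preimage element directly, and you spell out more carefully why a prefix of an element of $S\otimes T$ again lies in $S\otimes T$ (a point the paper leaves implicit when it asserts $q\in S\otimes T$).
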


\begin{proof}
It is immediate that $\overline{\pi_1}$ preserves the order. In
order to prove that $\overline{\pi_1}$ is open, let $s\in S$ and
$p\in S\otimes T$ such that $s\leq \overline{\pi_1}(p)$. Since
$p\in S\otimes T$ then $p = \{p_0,\dots,p_k\}$ where
$p_0,\dots,p_k \in S\times T$, $p_0$ is minimal in $S\times T$,
$p_0< \dots <p_k$ and $\hat{\pi_1}(p) = (\pi_{1}(p_k)]$. Thus,
$s\in (\pi_{1}(p_k)]$, so there exists $i=0,\dots,k$ such that $s
= \pi_{1}(p_i)$. Let $q = \{p_0,\dots,p_i\}$. We have that $q\in
S\otimes T$, $q\preceq p$ and $\overline{\pi_1}(q) = s$. Hence,
$\overline{\pi_1}$ is a morphism in $\Ffin$. In a similar way it
can be proved that $\overline{\pi_2}$ is a morphism in $\Ffin$.
\end{proof}

\begin{lem}\label{lemh}
Let $\alpha:F\ra S$ and $\beta: F\ra T$ be morphisms in $\Ffin$.
Then the map $h:F\ra S\otimes T$ given by $h(f) = \{(\alpha(g),
\beta (g)):g\leq f\}$ is a morphism in $\Ffin$ such that
$\overline{\pi_1} \circ h = \alpha$ and $\overline{\pi_2} \circ h
= \beta$.
\end{lem}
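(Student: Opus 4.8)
The plan is to check four things in turn: that $h(f)$ really lies in $S\otimes T$, that $h$ satisfies the two factorization identities, that $h$ is order preserving, and that $h$ is open. The last of these is where the genuine work lies; the rest are essentially bookkeeping with the definitions.

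First I would show $h(f)\in S\otimes T$. Since $F$ is a finite forest, $(f]$ is a chain $g_0<g_1<\dots<g_n=f$ with $g_0$ minimal, and as $\alpha,\beta$ preserve order the pairs $(\alpha(g_t),\beta(g_t))$ form a non-decreasing chain in $S\times T$; after deleting repetitions---which is legitimate by the remark following the definition of $u$-succession---this exhibits $h(f)$ as a $u$-succession of $S\times T$. Its least element $(\alpha(g_0),\beta(g_0))$ is minimal in $S\times T$, because $\alpha$ and $\beta$ are open and hence carry the minimal element $g_0$ to minimal elements of $S$ and $T$. Using the characterization of openness as $\alpha((f])=(\alpha(f)]$ and $\beta((f])=(\beta(f)]$, I then get $\hat{\pi_1}(h(f))=\alpha((f])=(\alpha(f)]$ and $\hat{\pi_2}(h(f))=\beta((f])=(\beta(f)]$, and since $\alpha(f)$ and $\beta(f)$ are the two coordinates of the top element of $h(f)$, this is exactly the defining condition of $S\otimes T$.

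The factorization identities are now immediate: $\overline{\pi_1}(h(f))=\max\hat{\pi_1}(h(f))=\max(\alpha(f)]=\alpha(f)$, so $\overline{\pi_1}\circ h=\alpha$, and symmetrically $\overline{\pi_2}\circ h=\beta$. For monotonicity, if $f\leq f'$ then, $F$ being a forest, $(f]$ is an initial segment of the chain $(f']$ having the same minimal element; applying $(\alpha,\beta)$ and deduplicating therefore realizes $h(f)$ as an initial segment of $h(f')$, which is precisely the relation $h(f)\preceq h(f')$.

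The main obstacle is openness. Here I would fix $f\in F$ and $p\in S\otimes T$ with $p\preceq h(f)$, and must produce $f'\leq f$ with $h(f')=p$. Writing $h(f)$ as the deduplicated chain of values $v_0<\dots<v_k$ taken by $(\alpha(g_t),\beta(g_t))$ along $(f]$, the relation $p\preceq h(f)$ forces $p=\{v_0,\dots,v_j\}$ for some $j\leq k$. I would then take $f'$ to be the largest element of $(f]$ whose image $(\alpha(f'),\beta(f'))$ equals $v_j$; as $(f']$ is the matching initial segment of $(f]$, the values of $(\alpha,\beta)$ along $(f']$ are exactly $v_0,\dots,v_j$, whence $h(f')=p$ and $f'\leq f$, as needed. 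The only subtlety is that deduplication commutes with passing to initial segments, which holds because the common initial part of two chains deduplicates to the common initial part of their images; granting this, $h$ is order preserving and open, hence a morphism in $\Ffin$, completing the argument.
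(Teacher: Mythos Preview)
Your proof is correct and follows essentially the same approach as the paper: enumerate $(f]$ as a chain, use openness of $\alpha$ and $\beta$ to verify the $S\otimes T$ conditions, and for openness of $h$ pick a preimage of the top of the truncated sequence. The only difference is cosmetic: you are more explicit about the deduplication bookkeeping and choose the \emph{largest} $f'$ mapping to $v_j$, whereas the paper simply takes any $f_k$ with $\{(\alpha(f_0),\beta(f_0)),\dots,(\alpha(f_k),\beta(f_k))\}=p$; since these are sets rather than sequences, deduplication is automatic and either choice works.
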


\begin{proof}
First we will prove the well definition of $h$. Let $f\in F$.
Since $F$ is finite and $(f]$ is a forest then there exists
$f_0,\dots,f_n\in F$ such that $f_0$ is minimal in $F$ and
$f_0<f_1<\dots<f_n = f$. Thus,
\[
h(f) =
\{(\alpha(f_0),\beta(f_0)),\dots,(\alpha(f_n),\beta(f_n))\}.
\]
Since $\alpha(f_0)$ is minimal in $S$ and $\beta(f_0)$ is minimal
in $T$ then $(\alpha(f_0), \beta(f_0))$ is minimal in $S\times T$.
Moreover, since $\alpha$ and $\beta$ are order-preserving maps
then $(\alpha(f_i),\beta(f_i))\leq (\alpha(f_{i+1}),
\beta(f_{i+1}))$ for every $i = 0,\dots,n$. Hence, $h(f) \in
\US(S\times T)$. Besides, $\hat{\pi_1}(h(f)) =
\{\alpha(f_0),\dots, \alpha(f_n)\} = (\alpha(f_n)]$ because
$\alpha((f_n]) = (\alpha(f_n)]$, so $h(f) \in S\otimes T$. Then
$h$ is a well defined map.

The fact that $h$ preserves the order follows from that $\alpha$
and $\beta$ preserve the order. Now we will show that $h$ is a
morphism in $\Ffin$. Let $f\in F$ and $p\in S\otimes T$ such that
$p\preceq h(f)$. Then there exist $k\leq n$ such that $p =
\{(\alpha(f_0),\beta(f_0)),\dots,(\alpha(f_k),\beta(f_k))\}$.
Hence, $f_k\leq f$ and $h(f_k) = p$. Thus, $h$ is a morphism in
$\Ffin$. It is immediate that $\overline{\pi_1} \circ h = \alpha$
and $\overline{\pi_2} \circ h = \beta$.
\end{proof}

\begin{lem}
Let $\alpha:F\ra S$, $\beta: F\ra T$ and $g:F \ra S\otimes T$
morphisms in $\Ffin$ such that $\overline{\pi_1} \circ g = \alpha$
and $\overline{\pi_2} \circ g = \beta$. Then $g = h$.
\end{lem}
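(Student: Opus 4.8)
The plan is to prove the uniqueness of the mediating morphism by showing that any $g$ satisfying $\overline{\pi_1}\circ g=\alpha$ and $\overline{\pi_2}\circ g=\beta$ must coincide with the explicitly constructed $h$ from Lemma \ref{lemh}. The natural strategy is to fix $f\in F$ and prove $g(f)=h(f)$ as elements of $S\otimes T\subseteq \US(S\times T)$. Since both $g(f)$ and $h(f)$ are $u$-successions in $S\times T$, and a $u$-succession is determined by its maximum element together with the fact that it equals $(\,\cdot\,]$ restricted appropriately, the key is to identify $g(f)$ componentwise via its two projections $\hat{\pi_1}$ and $\hat{\pi_2}$.

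First I would use the defining condition of $S\otimes T$: for any $p\in S\otimes T$ we have $\hat{\pi_1}(p)=(\pi_1(p_k)]$ and $\hat{\pi_2}(p)=(\pi_2(p_k)]$, where $p_k$ is the maximum of $p$. Applying this to $p=g(f)$, the $u$-succession $g(f)$ is completely determined by its maximum element $p_k$, since $g(f)=\{(\pi_1(p_0),\pi_2(p_0)),\dots,(p_k)\}$ and the two projection-chains are forced to be $(\pi_1(p_k)]$ and $(\pi_2(p_k)]$. By the definitions of $\overline{\pi_1}$ and $\overline{\pi_2}$ as the maxima of the projected successions, the hypotheses give $\pi_1(p_k)=\overline{\pi_1}(g(f))=\alpha(f)$ and $\pi_2(p_k)=\overline{\pi_2}(g(f))=\beta(f)$; hence the maximum of $g(f)$ is exactly $(\alpha(f),\beta(f))$, which is also the maximum of $h(f)$.

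The remaining step is to argue that two elements of $S\otimes T$ with the same maximum are equal. This is where the main obstacle lies: a priori a $u$-succession is a chain in $S\times T$, and distinct chains can share the same top element. I would resolve this by invoking the membership condition in $S\otimes T$ together with the fact that $S\times T$ is a finite forest, so that $((\alpha(f),\beta(f))]$ is itself a chain. Concretely, since $\hat{\pi_1}(g(f))=(\alpha(f)]$ and $\hat{\pi_2}(g(f))=(\beta(f)]$ are forced, and since the elements of $g(f)$ sit inside the chain $((\alpha(f),\beta(f))]$ with prescribed projections, the underlying chain is pinned down: each $p_i$ is the unique element of $S\times T$ below $(\alpha(f),\beta(f))$ whose projections are the $i$-th elements of $(\alpha(f)]$ and $(\beta(f)]$. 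The same description holds verbatim for $h(f)$, so $g(f)=h(f)$.

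Therefore $g=h$ pointwise, and since $h$ was already shown in Lemma \ref{lemh} to be the mediating morphism with $\overline{\pi_1}\circ h=\alpha$ and $\overline{\pi_2}\circ h=\beta$, uniqueness follows and $S\otimes T$ is the product of $S$ and $T$ in $\Ffin$. The only delicate point worth spelling out carefully is the forest structure of $S\times T$ guaranteeing that the chain below a given element is totally ordered, which is exactly what makes the reconstruction of $g(f)$ from its maximum unambiguous.
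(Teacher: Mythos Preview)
Your argument has a genuine gap: the claim that $S\times T$ is a finite forest is false, and with it the assertion that two elements of $S\otimes T$ sharing the same maximum must coincide. Take $S=T=\{0<1\}$. In $S\times T$ the set $((1,1)]=\{(0,0),(0,1),(1,0),(1,1)\}$ is not a chain, since $(0,1)$ and $(1,0)$ are incomparable. Worse, the three $u$-successions
\[
p_1=\{(0,0),(0,1),(1,1)\},\qquad p_2=\{(0,0),(1,0),(1,1)\},\qquad p_3=\{(0,0),(1,1)\}
\]
all lie in $S\otimes T$ (each projects onto $(1]$ in both coordinates) and all have maximum $(1,1)$, yet they are pairwise distinct. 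So knowing $\hat{\pi_1}(g(f))=(\alpha(f)]$ and $\hat{\pi_2}(g(f))=(\beta(f)]$ does \emph{not} pin down the pairing of the intermediate elements; your reconstruction step ``each $p_i$ is the unique element below $(\alpha(f),\beta(f))$ with the $i$-th projections'' does not go through.

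The missing ingredient is the openness of $g$, which you never invoke. The paper's proof uses it essentially: given $(s_i,t_i)\in g(f)$, the initial segment $p=\{(s_0,t_0),\dots,(s_i,t_i)\}$ lies in $S\otimes T$ and satisfies $p\preceq g(f)$; since $g$ is open there is $f'\leq f$ with $g(f')=p$, whence $s_i=\overline{\pi_1}(g(f'))=\alpha(f')$ and $t_i=\beta(f')$, so $(s_i,t_i)\in h(f)$. The reverse inclusion $h(f)\subseteq g(f)$ follows from monotonicity of $g$ applied to each $f_i\leq f$. Without exploiting that $g$ is open, uniqueness cannot be established.
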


\begin{proof}
Let $f\in F$. Since $F$ is finite and $(f]$ is a forest then there
exists $f_0,\dots,f_n\in F$ such that $f_0$ is minimal in $F$ and
$f_0<f_1<\dots<f_n = f$. Thus,
\[
h(f) =
\{(\alpha(f_0),\beta(f_0)),\dots,(\alpha(f_n),\beta(f_n))\}.
\]
Besides, there exists $(s_0,t_0)<\dots<(s_k,t_k)\in S\times T$
such that $(s_0,t_0)$ is minimal in $S\times T$,
$(s_0,t_0)<(s_1,t_1)<\dots<(s_k,t_k)$ and
\[
g(f) = \{(s_0,t_0),\dots,(s_k,t_k)\}.
\]
We will prove that $g(f) = h(f)$. Consider $(s_i,t_i)$ for some
$i=0,\dots,k$ and $p = \{(s_0,t_0),\dots,(s_k,t_k)\}$. Then $p\in
S\otimes T$ and $p\preceq g(f)$. Thus, there exists $f' \leq f$
such that $g(f') = p$, so there is $j=0,\dots,n$ such that $f' =
f_j$. In consequence we obtain that $\alpha(f_j) =
\overline{\pi_1}(g(f_j)) = s_i$ and $\beta(f_j) =
\overline{\pi_2}(g(f_j)) = t_i$, so $(s_i,t_i) =
(\alpha(f_j),\beta(f_j))\in h(f)$. Then, $g(f)\subseteq h(f)$.
Conversely, consider $(\alpha(f_i),\beta(f_i))$ for some
$i=0,\dots,n$. Since $f_i\leq f$ then $g(f_i)\preceq g(f)$. Thus,
there exists $j=0,\dots,k$ such that $g(f_i) =
\{(s_0,t_0),\dots,(s_j,t_j)\}$. Since $\alpha(f_i) =
\overline{\pi_1}(g(f_i)) = s_j$ and $\beta(f_i) =
\overline{\pi_2}(g(f_i)) = t_j$, we conclude that
$(\alpha(f_i),\beta(f_i)) = (s_j,t_j)\in g(f)$. Then, $h(f)
\subseteq g(f)$. Therefore, $g(f) = h(f)$.
\end{proof}

\begin{prop} \label{prodFfin}
Let $S,T\in \Ffin$. Then $S\otimes T$ is the product between $S$
and $T$ in $\Ffin$.
\end{prop}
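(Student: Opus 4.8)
The plan is to verify directly the universal property of the categorical product, for which essentially all of the substantive work has already been carried out in the three preceding lemmas. First I would observe that $S\otimes T$, equipped with the two maps $\overline{\pi_1}\colon S\otimes T \ra S$ and $\overline{\pi_2}\colon S\otimes T \ra T$, is the candidate product. The first of the preceding lemmas guarantees that these projection maps are indeed morphisms in $\Ffin$, so the data $(S\otimes T, \overline{\pi_1}, \overline{\pi_2})$ forms a legitimate cone over $S$ and $T$ in $\Ffin$.

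Next, given an arbitrary object $F\in \Ffin$ together with morphisms $\alpha\colon F\ra S$ and $\beta\colon F\ra T$, I would invoke Lemma \ref{lemh} to produce the mediating morphism $h\colon F\ra S\otimes T$ given by $h(f) = \{(\alpha(g),\beta(g)): g\leq f\}$, which that lemma already shows to be a morphism in $\Ffin$ satisfying $\overline{\pi_1}\circ h = \alpha$ and $\overline{\pi_2}\circ h = \beta$. This settles the existence half of the universal property. For uniqueness, I would appeal to the last of the preceding lemmas, which states precisely that any morphism $g\colon F\ra S\otimes T$ in $\Ffin$ satisfying $\overline{\pi_1}\circ g = \alpha$ and $\overline{\pi_2}\circ g = \beta$ must coincide with $h$.

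Assembling these three facts yields exactly the statement that $(S\otimes T, \overline{\pi_1}, \overline{\pi_2})$ satisfies the defining universal property of a product in $\Ffin$, whence $S\otimes T$ is the product of $S$ and $T$. I do not expect any genuine obstacle to remain at this stage: the real content — well-definedness of $S\otimes T$ as an object of $\Ffin$, the openness of the projections, and both the construction and the uniqueness of the mediating map — has already been absorbed into the lemmas. The only point requiring care is the bookkeeping, namely confirming that the mediating morphism $h$ really lands in $S\otimes T$ rather than merely in $\US(S\times T)$ and that both projection equations hold simultaneously; but these are exactly the conclusions recorded in Lemma \ref{lemh}, so the proof reduces to citing the three lemmas in sequence.
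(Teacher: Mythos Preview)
Your proposal is correct and matches the paper's approach exactly: the paper states Proposition~\ref{prodFfin} without proof immediately after the three lemmas, precisely because the universal property follows by combining them as you describe. There is nothing to add.
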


\section{The product in $\hFor$}\label{s6}

In this section we use results of Section \ref{s5} in order to
give an explicit description of the product of two objects in
certain category of finite forests $F$ endowed with a
distinguished family of subsets of $F$, which is equivalent to the
category of finite bounded prelinear Hilbert algebras. This
property allow us to obtain an explicit description of the
coproduct of two finite algebras in $\PHilz$. We also give an
explicit description of the coproduct of two finite algebras in
$\PHil$ in terms of the coproduct in $\PHilz$.

We start with the following lemma.

\begin{lem} \label{finHS}
Let $(X,\tau)$ be a finite topological space and $\KA$ a base for
the topology $\tau$ on $X$. Then $(X,\tau,\KA)$ is an $H$-space if
and only if $(X,\leq_{d})$ is a poset, $\tau$ is the set of
downsets of $(X,\leq_{d})$ and $(B_1 \cap B_{2}^c] \in \KA$ for
every $B_1, B_2 \in \KA$.
\end{lem}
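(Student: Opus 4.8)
The plan is to deduce this finite characterization from the two parts of the finite $H$-space lemma already proved in Section~\ref{s4}, together with the recorded fact that in a $T_0$ space the saturation operator coincides with the downset operator, i.e. $\sat(Y) = (Y]$. The two features that finiteness supplies are that every subset of $X$ is compact and that a finite $T_0$ space is automatically sober; these are precisely what allow the abstract conditions of Definition~\ref{Hs} to be replaced by the purely order-theoretic ones in the statement.

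For the forward implication, suppose $(X,\tau,\KA)$ is an $H$-space. Since it is sober it is $T_0$ by Remark~\ref{rimp}, so $\pd$ is a partial order and $(X,\pd)$ is a poset. Part~a) of the finite lemma of Section~\ref{s4} then gives that $\tau$ is exactly the family of downsets of $(X,\pd)$. Finally, condition $\mathrm{(H2)}$ asserts $\sat(B_1\cap B_2^c)\in\KA$ for all $B_1,B_2\in\KA$, and since the space is $T_0$ we have $\sat(B_1\cap B_2^c) = (B_1\cap B_2^c]$; hence $(B_1\cap B_2^c]\in\KA$, as required.

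For the converse, assume $(X,\pd)$ is a poset, $\tau$ is the set of downsets of $(X,\pd)$, $\KA$ is a base, and $(B_1\cap B_2^c]\in\KA$ for all $B_1,B_2\in\KA$. I would first check that the order induced by $\tau$ is the given one: in a space whose opens are the downsets of $\pd$ the closed sets are the upsets, so $\clx = [x)$, whence the dual specialization order of $\tau$ coincides with $\pd$; in particular $(X,\tau)$ is $T_0$. Then I verify the three conditions of Definition~\ref{Hs}. The family $\KA$ is a base by hypothesis, and since $X$ is finite every element of $\KA$ is compact, giving $\mathrm{(H1)}$. Using $\sat(Y) = (Y]$ in this $T_0$ space, the hypothesis $(B_1\cap B_2^c]\in\KA$ is literally $\sat(B_1\cap B_2^c)\in\KA$, which is $\mathrm{(H2)}$. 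Finally, a finite $T_0$ space is sober, which is $\mathrm{(H3)}$. Thus $(X,\tau,\KA)$ is an $H$-space; equivalently, both directions can be packaged by invoking part~b) of the lemma of Section~\ref{s4} (which equates being an $H$-space with being a $T_0$ base closed under $B\mapsto \sat(B_1\cap B_2^c)$) together with part~a).

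The only genuinely delicate point is the bookkeeping about which order is in play: one must make explicit that the partial order $\pd$ appearing in the hypotheses is the dual specialization order determined by $\tau$, so that the order-theoretic condition $(B_1\cap B_2^c]\in\KA$ and the topological condition $\mathrm{(H2)}$ coincide through the identity $\sat(Y)=(Y]$. Once this identification is pinned down, everything else is routine and uses finiteness only to secure compactness and sobriety.
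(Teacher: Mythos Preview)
Your proof is correct and follows essentially the same route as the paper: both directions are obtained from the finite $H$-space lemma of Section~\ref{s4} together with the identity $\sat(Y)=(Y]$ in $T_0$ spaces, using finiteness only to secure compactness of basic sets and sobriety of $T_0$ spaces. Your exposition is in fact a bit more explicit than the paper's (which packages the converse tersely via part~b) of that lemma after checking $T_0$); the one place you over-worry is the ``bookkeeping'' about which order is in play, since $\pd$ is by definition the dual specialization order of $\tau$, so there is no separate identification to make.
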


\begin{proof}
Let $(X,\tau,\KA)$ be an $H$-space. It follows from Lemma
\ref{finHS} that $(X,\leq_{d})$ is a poset, $\tau$ is the set of
downsets of $(X,\leq_{d})$ and $(B_1 \cap B_{2}^c] \in \KA$ for
every $B_1, B_2 \in \KA$.

Conversely, suppose that $(X,\leq_{d})$ is a poset, $\tau$ is the
set of downsets of $(X,\leq_{d})$ and $(B_1 \cap B_{2}^c] \in \KA$
for every $B_1, B_2 \in \KA$. We will prove that $(X,\tau)$ is
$T_0$. Let $x,y \in X$ such that $x\nleq_{d} y$. Since
$(X,\leq_{d})$ is a poset we can assume that $x\nleq_{d} y$, so
$y\notin \overline{\{x\}}$. Thus, there is $U\in \tau$ such that
$y\in U$ and $U\cap \{x\} = \emptyset$, i.e., $x\notin U$. Hence,
$(X,\tau)$ is $T_0$. Since $X$ is finite then $(X,\tau)$ is sober.
Therefore, $(X,\tau,\KA)$ is an $H$-space.
\end{proof}

\begin{rem} \label{remv}
Let $X$ be a finite set. Then it follows from Lemma \ref{finHS}
that $(X,\tau,\KA)$ is an $H$-space if and only if $(X,\leq)$ is a
poset, $\tau$ is the set of upsets of $(X,\leq)$ and $\KA$ is a
base of $(X,\tau)$ such that $[B_1 \cap B_{2}^c) \in \KA$ for
every $B_1, B_2 \in \KA$.
\end{rem}

If $(X,\leq)$ is a poset and $U\subseteq X$, we write $U^m$ for
the set of minimal elements of $U$.

\begin{defn}
Let $F$ be a finite forest. An $h$-base for $F$ is a family
$\mathcal{B}$ of upsets of $F$ such that satisfies the following
conditions:
\begin{enumerate}[\normalfont 1)]
\item $[x) \in \mathcal{B}$ for every $x\in F$, \item if $B\in
\mathcal{B}$ and $M\subseteq B^m$ then $[M) \in \mathcal{B}$.
\end{enumerate}
\end{defn}

Notice that if $F$ be a finite forest and $\mathcal{B}$ is an
$h$-base of $F$ then $\emptyset \in \mathcal{B}$.

\begin{lem} \label{hb-b}
Let $F$ be a finite forest and $\mathrm{B}$ a family of upsets of
$F$. Then the following conditions are equivalent:
\begin{enumerate}[\normalfont a)]
\item $\mathcal{B}$ is an $h$-base. \item $\mathcal{B}$ is a base
of $F$ with the topology given by the upsets of the poset $F$ such
that $[B_1 \cap B_{2}^c) \in \mathcal{B}$ for every $B_1, B_2 \in
\mathcal{B}$.
\end{enumerate}
\end{lem}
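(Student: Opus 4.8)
The plan is to prove the two implications separately, and throughout to exploit the fact that $F$ is a finite forest via the following structural observation, which I would record first. If $B$ is an upset of $F$ then $B=[B^m)=\bigcup_{a\in B^m}[a)$, and these cones are pairwise disjoint. Indeed, for $x\in B$ the set $B\cap(x]$ is a nonempty subset of the chain $(x]$, so it has a least element $m_x$, and $m_x$ is forced to be minimal in $B$; hence $x\in[m_x)$ with $m_x\in B^m$, giving $B\subseteq[B^m)$. For disjointness, if $a,a'\in B^m$ both lie below some $x$, then $a,a'$ are comparable (being elements of the chain $(x]$) and both minimal in $B$, hence $a=a'$.

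For a) $\Rightarrow$ b): since $[x)\in\mathcal{B}$ for every $x$ and every upset $U$ equals $\bigcup_{x\in U}[x)$, the family $\mathcal{B}$ is a base for the topology of upsets. For the closure condition, given $B_1,B_2\in\mathcal{B}$ I would put $M:=(B_1\cap B_2^c)^m$ and show $M\subseteq B_1^m$. If some $m\in M$ were not minimal in $B_1$, there would be $m'\in B_1$ with $m'<m$; since $B_2^c$ is a downset and $m\in B_2^c$, also $m'\in B_2^c$, so $m'\in B_1\cap B_2^c$ contradicts the minimality of $m$. By finiteness $[B_1\cap B_2^c)=[M)$, and condition 2) of the definition of $h$-base, applied to $B_1\in\mathcal{B}$ and $M\subseteq B_1^m$, yields $[B_1\cap B_2^c)=[M)\in\mathcal{B}$.

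For b) $\Rightarrow$ a): condition 1) follows because $[x)$ is the least upset containing $x$, so the base property provides $B\in\mathcal{B}$ with $x\in B\subseteq[x)$, and $B$ being an upset forces $B=[x)\in\mathcal{B}$. For condition 2), given $B\in\mathcal{B}$ and $M\subseteq B^m$, I would enumerate $B^m\setminus M=\{n_1,\dots,n_r\}$ and remove the cones $[n_i)$ one at a time. Using the disjointness of cones, $[B^m\setminus\{n_1,\dots,n_i\})=[B^m\setminus\{n_1,\dots,n_{i-1}\})\cap[n_i)^c$ is again an upset. Since each $[n_i)$ lies in $\mathcal{B}$ by condition 1) and $\mathcal{B}$ is closed under the operation $(B_1,B_2)\mapsto[B_1\cap B_2^c)$, an induction on $i$ gives $[B^m\setminus\{n_1,\dots,n_i\})\in\mathcal{B}$; for $i=r$ this is exactly $[M)\in\mathcal{B}$.

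I expect the main obstacle to be this second implication, specifically the need to verify that intersecting an upset with a single complemented cone really produces an upset, so that the closure operation returns the set unchanged. This is precisely where the forest hypothesis is indispensable: in a general poset $B_{i-1}\cap[n_i)^c$ need not be an upset, whereas the disjointness of the cones of minimal elements makes each step land back in $\mathcal{B}$. The first implication, by contrast, reduces to the single remark that the minimal elements of $B_1\cap B_2^c$ are already minimal in $B_1$.
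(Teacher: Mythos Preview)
Your proof is correct. For the direction a) $\Rightarrow$ b) you follow essentially the same route as the paper: both arguments identify a subset of $B_1^m$ (you take $(B_1\cap B_2^c)^m$, the paper takes $B_1^m\cap B_2^c$, and in a forest these coincide) and then invoke condition~2) of the $h$-base definition.

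For b) $\Rightarrow$ a) the approaches diverge. The paper does not argue directly: it observes that condition b) makes $(F,\tau,\mathcal{B})$ into a finite $H$-space (via Remark~\ref{remv}) and then cites an external result, \cite[Lemma 4.1]{CCM}, to conclude that such a structure satisfies the $h$-base axioms. You instead give a self-contained elementary proof: you extract condition~1) from the minimality of the cones $[x)$ among upsets, and you obtain condition~2) by successively peeling off the cones $[n_i)$ of the unwanted minimal elements, using the disjointness of cones of minimal elements in a forest to ensure that each intermediate set $B_{i-1}\cap[n_i)^c$ is already an upset, so that the closure operation $[B_1\cap B_2^c)$ returns it unchanged. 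Your argument is more transparent and keeps the proof internal to the paper; the paper's route is shorter on the page but hides the work in the cited lemma.
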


\begin{proof}
Assume the condition a).

First we will prove that $\mathcal{B}$ is a base of $F$. For every
$x\in X$ it is immediate that $x\in [x)$ and $[x)\in \mathcal{B}$.
Let now $x\in B_1 \cap B_2$ with $B_1, B_2 \in \mathcal{B}$. Then
$x\in [x) \subseteq B_1 \cap B_2$ and $[x)\in \mathcal{B}$. Thus,
$\mathcal{B}$ is a base of $F$. We also have that the topology
generated by $\mathcal{B}$ is equal to the set of the upsets of
$F$. In order to show it, let $U$ be in the topology generated by
$\mathcal{B}$. Since the elements of $\mathcal{B}$ are upsets then
$U$ is an upset. Conversely, let $U$ be an upset. Then $U =
\bigcup_{x\in U} [x)$. Since $[x) \in \mathcal{B}$ then $U$ is in
the topology generated by $\mathcal{B}$.

Let $B_1, B_2 \in \mathcal{B}$. In what follows we will see that
$[B_1 \cap B_2^c) \in \mathcal{B}$. Notice that $B_{1}^{m}\cap
B_2^c \subseteq B_{1}^m$. Since $B_1 \in \mathcal{B}$ then
$[B_{1}^{m}\cap B_2^c) \in \mathcal{B}$. Motivated by this fact,
in order to prove that $[B_1 \cap B_2^c) \in \mathcal{B}$ we will
see that $[B_{1}^m \cap B_2^c) = [B_1 \cap B_2^c)$. The inclusion
$[B_{1}^m \cap B_2^c) \subseteq [B_1 \cap B_2^c)$ is immediate.
Conversely, let $x\in [B_1 \cap B_2^c)$. Hence, there exists $y\in
B_1 \cap B_2^c$ such that $y\leq x$. Consider $z\in B_{1}^m$ such
that $z\leq y$. Since $y\notin B_2 = [B_2)$ then $z\notin B_2$, so
$z\in B_{1}^m \cap B_2^c$ and $z\leq x$. Thus, $x\in [B_{1}^m \cap
B_2^c)$. Then $[B_1 \cap B_2^c)\subseteq [B_{1}^m \cap B_2^c)$, so
$[B_{1}^m \cap B_2^c) = [B_1 \cap B_2^c)$, which was our aim.

Finally we have that Remark \ref{remv} and \cite[Lemma 4.1]{CCM}
proves that the condition b) implies the condition a).
\end{proof}

Let $\ChFor$ be the category whose objects are structures
$(X,\leq,\mathcal{B}_X)$ such that $(X,\leq)$ is a finite forest,
$X\in \mathcal{B}_X$ and $\mathcal{B}_X$ is an h-base for
$(X,\leq)$. The morphisms in $\ChFor$ are defined as binary
relations $R:(X,\leq,\mathcal{B}_X) \ra (Y,\leq,\mathcal{B}_Y)$
which satisfy the following conditions:
\begin{enumerate}[\normalfont 1.]
\item $R(x)$ is a non empty downset for every $x\in X$, \item if
$(x,y)\in R$ then there exists $z\in X$ such that $z\leq x$ and
$R(z) = (y]$, \item $R^{-1}(U) \in \mathcal{B}_X$ for every $U\in
\mathcal{B}_Y$.
\end{enumerate}
The identity in $\ChFor$ is the binary relation $\geq$.

Let $\fPHilz$ be the full subcategory of $\PHilz$ whose objects
are finite. In Corollary \ref{corrh} it was proved that there
exists a categorical equivalence between $\PHilz$ and $\PHSz$. The
explicit construction of this equivalence allow us to set that
there exists an equivalence between $\fPHilz$ and the full
subcategory of $\PHSz$ whose objects are also finite. By the
results of this section we have that the last mentioned category
is isomorphic to $\ChFor$. Therefore, there exists a categorical
equivalence between $\fPHilz$ and $\ChFor$.

\begin{rem} \label{remv2}
Let $R:(X,\leq,\mathcal{B}_X) \ra (Y,\leq,\mathcal{B}_Y)$ be a
morphism in $\ChFor$ and $x,y \in X$ such that $x\leq y$. Then
$R(x) \subseteq R(y)$. In order to see it, let $z\in R(x)$, i.e.,
$(x,y)\in R$. Besides $y\geq x$, so $(y,z) \in (\geq \circ R)$. It
follows from Remark \ref{remv} and \cite[Theorem 3.1]{CCM} that
$\geq \circ R = R$, so $(y,z) \in R$, i.e., $z\in R(y)$.
Therefore, $R(x) \subseteq R(y)$.
\end{rem}

\begin{lem}\label{frrf}
Let $(X,\leq,\mathcal{B}_X)$ and $(Y,\leq,\mathcal{B}_Y)$ objects
in the category $\ChFor$.

Suppose that $f:(X,\leq) \ra (Y,\leq)$ is an open map such that
$f^{-1}(U) \in \mathcal{B}_X$ for every $U\in \mathcal{B}_Y$. Then
the binary relation $R(f):(X,\leq,\mathcal{B}_X)\ra
(Y,\leq,\mathcal{B}_Y)$ given by
\begin{center}
$(x,y)\in R(f)$ if and only if $y\leq f(x)$
\end{center}
is a morphism in $\ChFor$.

Conversely, suppose that $R:(X,\leq,\mathcal{B}_X)\ra
(Y,\leq,\mathcal{B}_Y)$ is a morphism in $\ChFor$. Then for every
$x\in X$ there exists the maximum of $R(x)$. Moreover, the map
$f_R:(X,\leq) \ra (Y,\leq)$ is an open map such that
$f_{R}^{-1}(U) \in \mathcal{B}_X$ for every $U\in \mathcal{B}_Y$.
\end{lem}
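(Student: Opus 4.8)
The statement asserts a bijective-on-objects correspondence between open maps $f$ (satisfying the base-pullback condition) and $\ChFor$-morphisms $R$, with the passage $f \mapsto R(f)$ and $R \mapsto f_R$ being mutually inverse. The plan is to prove the two directions separately and then verify they are inverse to each other. Both directions rely heavily on the finiteness of the forests and on the characterization of $H$-spaces in the finite case (Lemma~\ref{finHS}), together with the composition identity $\geq \circ R = R$ recorded in Remark~\ref{remv2}. Throughout I will use that $R(x)$ is, by definition of a $\ChFor$-morphism, a nonempty downset, and that condition~2 in the definition of $\ChFor$-morphisms is precisely the functionality condition that lets one recover a point-map.

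\textbf{The forward direction.} Suppose $f:(X,\leq)\ra(Y,\leq)$ is open with $f^{-1}(U)\in\mathcal{B}_X$ for all $U\in\mathcal{B}_Y$, and define $R(f)$ by $(x,y)\in R(f)$ iff $y\leq f(x)$. I would check the three defining conditions of a $\ChFor$-morphism in turn. For condition~1, $R(f)(x) = (f(x)]$ is a nonempty downset, which is immediate. For condition~2, given $(x,y)\in R(f)$, i.e.\ $y\leq f(x)$, openness of $f$ supplies $z\leq x$ with $f(z)=y$; then $R(f)(z) = (f(z)] = (y]$, as required. For condition~3, I must show $R(f)^{-1}(U)\in\mathcal{B}_X$ for $U\in\mathcal{B}_Y$; here I would compute $R(f)^{-1}(U) = \{x : (f(x)]\cap U \neq \emptyset\}$ and identify this with $f^{-1}(U)$ using that $U$ is an upset (so $(f(x)]\cap U\neq\emptyset$ iff $f(x)\in U$), after which the hypothesis on $f$ finishes the job.

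\textbf{The converse direction.} Suppose $R$ is a $\ChFor$-morphism. The first task is to show $R(x)$ has a maximum for each $x$. Since $R(x)$ is a nonempty downset in the finite forest $Y$, it suffices to show it is a chain, because a nonempty finite chain has a maximum; and a nonempty downset of a forest contained in a single $(y]$ is automatically a chain. So the key is to see that $R(x)$ is \emph{directed} upward (or sits below a single point): I would argue that if $y_1,y_2\in R(x)$ then by condition~2 there are $z_1,z_2\leq x$ with $R(z_i)=(y_i]$, and since $(z_1]$ and $(z_2]$ are chains through comparable elements below $x$, together with the monotonicity $R(z_i)\subseteq R(x)$ from Remark~\ref{remv2}, one forces $y_1$ and $y_2$ to be comparable. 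Having obtained $f_R(x):=\max R(x)$, I then set out to verify that $f_R$ is order-preserving (from $x\leq x'$ and $R(x)\subseteq R(x')$), open (given $y\leq f_R(x)$, so $y\in R(x)$, apply condition~2 to get $z\leq x$ with $R(z)=(y]$, whence $f_R(z)=y$), and that $f_R^{-1}(U)\in\mathcal{B}_X$ (by showing $f_R^{-1}(U) = R^{-1}(U)$ for upsets $U$ and invoking condition~3).

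\textbf{Main obstacle and closing.} The step I expect to be the crux is establishing that $R(x)$ is a chain, hence has a maximum: this is where the forest structure of $Y$, the functionality condition~2, and the monotonicity of Remark~\ref{remv2} must be combined carefully, and it is the only place where a genuine argument rather than an unwinding of definitions is needed. Once the maximum exists, the remaining verifications for $f_R$ are routine. Finally I would note that the two constructions are mutually inverse: starting from open $f$, one has $\max R(f)(x) = \max (f(x)] = f(x)$, so $f_{R(f)} = f$; and starting from a morphism $R$, the relation $R(f_R)$ satisfies $(x,y)\in R(f_R)$ iff $y\leq f_R(x)=\max R(x)$ iff $y\in R(x)$ (using that $R(x)=(\max R(x)]$ since $R(x)$ is a downset with a maximum), so $R(f_R)=R$. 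This establishes the claimed correspondence.
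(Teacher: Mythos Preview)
Your proposal is correct and follows essentially the same route as the paper: the forward direction is identical, and for the converse you use exactly the paper's key step---given $y_1,y_2\in R(x)$, pick $z_1,z_2\leq x$ with $R(z_i)=(y_i]$ via condition~2, use that $(x]$ is a chain in the forest $X$ to compare $z_1,z_2$, and then apply the monotonicity of Remark~\ref{remv2} (note: you need $R(z_1)\subseteq R(z_2)$ when $z_1\leq z_2$, not merely $R(z_i)\subseteq R(x)$ as you wrote). Your closing paragraph on mutual-inverse constructions goes slightly beyond the lemma itself; in the paper that content is the subject of the subsequent Corollary~\ref{bijection}.
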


\begin{proof}
Suppose that $f:(X,\leq) \ra (Y,\leq)$ is an open map such that
$f^{-1}(U) \in \mathcal{B}_X$ for every $U\in \mathcal{B}_Y$. Let
$x\in X$. Then $R(f)(x) = (f(x)]$, so we have that $R(f)(x)$ is a
downset. Since $f(x) \in R(f)(x)$ then $R(f)(x) \neq \emptyset$.
Now consider $(x,y) \in R(f)$, i.e., $y\leq f(x)$. Taking into
account that $f$ is an open map we have that there exists $z\leq
x$ such that $f(z) = y$, so $R(z) = (y]$. Let $U\in
\mathcal{B}_Y$. Straightforward computations show that
$R(f)^{-1}(U) = f^{-1}(U)$. Since $f^{-1}(U) \in \mathcal{B}_X$
then $R(f)^{-1}(U) \in \mathcal{B}_X$.

Conversely, suppose that $R:(X,\leq,\mathcal{B}_X)\ra
(Y,\leq,\mathcal{B}_Y)$ is a morphism in $\ChFor$. Let $x\in X$.
Since $R(x) \neq \emptyset$ and $R(x)$ is finite then the set of
maximal elements of $R(x)$ is non empty. In what follows we will
see that $R(x)$ has a maximum element. Let $y_1, y_2$ maximal
elements in $R(x)$. Since $(x,y_1), (x,y_2) \in R$ then there
exists $z_1, z_2\in X$ such that $z_1 \leq x$, $z_2 \leq x$,
$R(z_1) = (y_1]$ and $R(z_2) = (y_2]$. Taking into account that
$X$ is a forest we deduce that $z_1 \leq z_2$ or $z_2 \leq z_1$.
We can assume that $z_1 \leq z_2$. It follows from Remark
\ref{remv2} that $R(z_1)\subseteq R(z_2)$. Since $y_1 \in R(z_1)
\subseteq R(z_2) = (y_2]$ we have that $y_1 \leq y_2$, so $y_1 =
y_2$. Hence, the set $R(x)$ has a maximum element.

Now we will prove that the map $f_R:(X,\leq) \ra (Y,\leq)$ is an
open map such that $f_{R}^{-1}(U) \in \mathcal{B}_X$ for every
$U\in \mathcal{B}_Y$. Let $x\leq y$. By Remark \ref{remv2} we have
that $R(x) \subseteq R(y)$, so $f_{R}(x) \leq f_{R}(y)$. Hence,
$f_R$ is a monotone map. In order to see that $f_R$ is an open
map, let $y\leq f_{R}(x)$, i.e., $(x,y) \in R$. Then there exists
$z\in X$ such that $z\leq x$ and $R(z) = (y]$. Thus, $f_{R}(z) =
y$. Thus, $f_R$ is an open map. Finally, let $U\in \mathcal{B}_Y$.
Straightforward computations show that $f_{R}^{-1}(U) =
R^{-1}(U)$. Since $R^{-1}(U) \in \mathcal{B}_X$ then
$f_{R}^{-1}(U) \in \mathcal{B}_X$, which was our aim.
\end{proof}

\begin{cor}\label{bijection}
Let $(X,\leq,\mathcal{B}_X)$ and $(Y,\leq,\mathcal{B}_Y)$ be
objects in $\ChFor$. Then there exists a bijection between the set
of open maps from $(X,\leq)$ to $(Y,\leq)$ which satisfy that
$f^{-1}(U) \in \mathcal{B}_X$ for every $U\in \mathcal{B}_Y$ and
the set of morphisms of $\ChFor$ from $(X,\leq,\mathcal{B}_X)$ to
$(Y,\leq,\mathcal{B}_Y)$.
\end{cor}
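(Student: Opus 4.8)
The plan is to show that the two assignments furnished by Lemma \ref{frrf} are mutually inverse, so that together they give the desired bijection. First I would fix notation: let $\mathsf{O}(X,Y)$ denote the set of open maps $f:(X,\leq)\ra(Y,\leq)$ with $f^{-1}(U)\in\mathcal{B}_X$ for every $U\in\mathcal{B}_Y$, and let $\mathsf{M}(X,Y)$ denote the set of morphisms of $\ChFor$ from $(X,\leq,\mathcal{B}_X)$ to $(Y,\leq,\mathcal{B}_Y)$. The first half of Lemma \ref{frrf} already shows that $f\mapsto R(f)$ maps $\mathsf{O}(X,Y)$ into $\mathsf{M}(X,Y)$, and the second half shows that $R\mapsto f_R$ maps $\mathsf{M}(X,Y)$ into $\mathsf{O}(X,Y)$. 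It then remains to verify that the two composites are identities.

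For the composite $f\mapsto f_{R(f)}$, I would fix $f\in\mathsf{O}(X,Y)$ and $x\in X$. By the definition of $R(f)$ we have $R(f)(x)=(f(x)]$, whose maximum is $f(x)$; hence $f_{R(f)}(x)=\max R(f)(x)=f(x)$. Since $x$ is arbitrary, $f_{R(f)}=f$.

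For the composite $R\mapsto R(f_R)$, I would fix $R\in\mathsf{M}(X,Y)$ and $x\in X$. By the second part of Lemma \ref{frrf} the set $R(x)$ has a maximum, namely $f_R(x)$; moreover $R(x)$ is a downset by condition 1 in the definition of a morphism of $\ChFor$. The key step is to note that a downset with a maximum element $m$ equals $(m]$, so $R(x)=(f_R(x)]$. Consequently, for $y\in Y$ one has $(x,y)\in R$ precisely when $y\in R(x)$, precisely when $y\leq f_R(x)$, precisely when $(x,y)\in R(f_R)$; thus $R(f_R)=R$.

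Therefore $f\mapsto R(f)$ and $R\mapsto f_R$ are mutually inverse, which yields the claimed bijection. No genuine obstacle remains beyond what Lemma \ref{frrf} already supplies; the only point requiring care is the identification $R(x)=(f_R(x)]$, which rests on combining the downset condition with the existence of a maximum of $R(x)$, both of which are established in the preceding lemma.
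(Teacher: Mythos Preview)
Your proof is correct and is cleaner than the paper's own argument. Both proofs rest on Lemma~\ref{frrf} and ultimately need the equality $R(f_R)=R$; the difference lies in how the inclusion $R(f_R)\subseteq R$ is obtained. The paper argues by contradiction: assuming $y\leq f_R(x)$ but $y\notin R(x)$, it writes the upset $R(x)^c$ as a union of basic sets $U_i\in\mathcal{B}_Y$, picks one containing $y$, and then derives a contradiction using $R^{-1}(U_i)=f_R^{-1}(U_i)$. Your route sidesteps this entirely by combining two facts already available---$R(x)$ is a downset (condition~1 for $\ChFor$-morphisms) and $R(x)$ has maximum $f_R(x)$ (second part of Lemma~\ref{frrf})---to conclude immediately that $R(x)=(f_R(x)]$. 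This is a genuine simplification: it never invokes the $h$-base $\mathcal{B}_Y$ or the preimage condition, and it makes transparent why the correspondence works at the level of individual fibres. You also make explicit the other composite $f_{R(f)}=f$, which the paper leaves as an ``immediate'' injectivity claim; your direct verification via $R(f)(x)=(f(x)]$ is the natural way to see it.
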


\begin{proof}
Let $\Gamma$ be the set of open maps from $(X,\leq)$ to $(Y,\leq)$
which satisfy that $f^{-1}(U) \in \mathcal{B}_X$ for every $U\in
\mathcal{B}_Y$ and let $\Delta$ be the set of morphisms of
$\ChFor$ from $(X,\leq,\mathcal{B}_X)$ to
$(Y,\leq,\mathcal{B}_Y)$. Let $F:\Gamma\ra \Delta$ given by $F(f)
= R(f)$. It follows from Lemma \ref{frrf} that it is a well
defined map. The injectivity of $F$ is immediate. In order to
prove that $F$ is a surjective map, let $R\in \Delta$. It follows
from Lemma \ref{frrf} that $f_R \in \Gamma$. We will prove that
$F(f_R) = R$, i.e., $R(f_R) = R$. Let $(x,y) \in R$, i.e., $y\in
R(x)$. In particular, $y\leq f_{R}(x)$. Thus, $y\in R(f_R)(x)$, so
$(x,y) \in R(f_R)$. Then we have proved that $R\subseteq R(f_R)$.
Conversely, let $(x,y) \in R(f_R)$, i.e., $y\leq f_{R}(x)$.
Suppose that $y\notin R(x)$. Since $R(x)^c$ is an upset we have
that $R(x)^c = \bigcup_{i=1}^{n}U_i$, for some $U_1,\ldots,U_n\in
\mathcal{B}_Y$. So there exists $i=1,\ldots,n$ such that $y\in
U_i$. It is immediate that $R(x)\subseteq U_{i}^c$, i.e., that
$x\notin R^{-1}(U_i)$. However, $R^{-1}(U_i) = f_{R}^{-1}(U_i)$,
so $f_{R}(x)\notin U_i$. But $y\in U_i$ and $y\leq f_{R}(x)$, so
since $U_i$ is an upset we deduce that $f_{R}(x)\in U_i$, which is
a contradiction. Then $y\in R(x)$. Thus, $R(f_R) \subseteq R$.
Therefore, $R(f_R) = R$.
\end{proof}

Let $\hFor$ the category whose objects are the objects of $\ChFor$
and whose morphisms are open maps $f:(X,\leq,\mathcal{B}_X) \ra
(Y,\leq,\mathcal{B}_Y)$ such that $f^{-1}(U) \in \mathcal{B}_X$
for every $U\in \mathcal{B}_Y$.

\begin{lem} \label{bijectionb}
Let $(X,\leq,\mathcal{B}_X), (Y,\leq,\mathcal{B}_Y),
(Z,\leq,\mathcal{B}_Z)$ be objects of $\ChFor$. If
$f:(X,\leq,\mathcal{B}_X) \ra (Y,\leq,\mathcal{B}_Y)$ and $g:
(Y,\leq,\mathcal{B}_Y) \ra (Z,\leq,\mathcal{B}_Z)$ are morphisms
in $\hFor$ then $R(g\circ f) = R(g) \circ R(f)$. Conversely, if
$R:(X,\leq,\mathcal{B}_X) \ra (Y,\leq,\mathcal{B}_Y)$ and $S:
(Y,\leq,\mathcal{B}_Y) \ra (Z,\leq,\mathcal{B}_Z)$ are morphisms
in $\hFor$ then $f_{S\circ R} = f_{S} \circ f_{R}$.
\end{lem}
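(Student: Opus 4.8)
The plan is to prove the two displayed identities separately, thereby showing that the assignments $f\mapsto R(f)$ and $R\mapsto f_R$ from Lemma \ref{frrf} are functorial; combined with the bijection of Corollary \ref{bijection} this will exhibit $\hFor$ and $\ChFor$ as isomorphic categories. For the first identity I would simply unwind the definition of relational composition. If $f$ and $g$ are morphisms in $\hFor$, then $(x,z)\in R(g)\circ R(f)$ means there is $y\in Y$ with $(x,y)\in R(f)$ and $(y,z)\in R(g)$, that is, $y\leq f(x)$ and $z\leq g(y)$. If such $y$ exists then, since $g$ is order preserving, $z\leq g(y)\leq g(f(x))$, so $(x,z)\in R(g\circ f)$; conversely, if $z\leq g(f(x))$ then $y=f(x)$ witnesses $(x,z)\in R(g)\circ R(f)$. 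Hence $R(g\circ f)=R(g)\circ R(f)$, the only non-formal input being the monotonicity of $g$.

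For the second identity, let $R$ and $S$ be the given relations (morphisms in $\ChFor$). The cleanest route is to deduce $f_{S\circ R}=f_S\circ f_R$ from the first identity together with Corollary \ref{bijection}. By Lemma \ref{frrf} the maps $f_R$ and $f_S$ lie in $\hFor$, and so does $f_S\circ f_R$, since open maps compose to open maps and $(f_S\circ f_R)^{-1}(U)=f_R^{-1}(f_S^{-1}(U))\in\mathcal{B}_X$ for $U\in\mathcal{B}_Z$. Applying the first identity to $f_R,f_S$ and using $R(f_R)=R$ and $R(f_S)=S$ (the content of Corollary \ref{bijection}) gives $R(f_S\circ f_R)=R(f_S)\circ R(f_R)=S\circ R$. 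Since $f_{R(h)}=h$ for every $h\in\hFor$, again by Corollary \ref{bijection}, applying $f_{(\cdot)}$ to this equality yields $f_S\circ f_R=f_{S\circ R}$.

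Alternatively I could argue directly, which also exposes where the real work lies. One has $(S\circ R)(x)=\bigcup_{y\in R(x)}S(y)$, and I would claim its maximum is $f_S(f_R(x))$. Setting $y_0=f_R(x)=\max R(x)$ and $z_0=f_S(y_0)=\max S(y_0)$, clearly $z_0\in(S\circ R)(x)$; and for any $z\in(S\circ R)(x)$, arising from some $y\in R(x)$ with $z\in S(y)$, the monotonicity recorded in Remark \ref{remv2} gives $S(y)\subseteq S(y_0)$ because $y\leq y_0$, whence $z\leq z_0$. Thus $z_0=\max(S\circ R)(x)=f_{S\circ R}(x)$.

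I expect the main obstacle to lie in this second identity rather than the first: one must be certain that the maximum of the composite relation is genuinely attained and coincides with the composite of the individual maxima, and the decisive ingredient for this is the order monotonicity of $\ChFor$-morphisms from Remark \ref{remv2} (or, in the slick route, the verification that $f_S\circ f_R$ again belongs to $\hFor$, so that the inversion identity $f_{R(\cdot)}=\mathrm{id}$ can be invoked).
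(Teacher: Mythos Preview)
Your proof is correct, and your first identity and your ``direct'' argument for the second identity coincide with the paper's own proof: the paper also unwinds relational composition using monotonicity of $g$ for the first part, and for the second part computes $\max(S\circ R)(x)$ by showing $S(y)\subseteq S(\max R(x))$ for each $y\in R(x)$ via Remark~\ref{remv2} and noting $\max R(x)\in R(x)$ for the reverse inequality.

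Your ``slick route'' for the second identity, however, is a genuinely different and somewhat cleaner argument than the paper's. Rather than recomputing the maximum of $(S\circ R)(x)$ directly, you deduce $f_{S\circ R}=f_S\circ f_R$ formally from the first identity together with the bijectivity established in Corollary~\ref{bijection} (specifically the equalities $R(f_R)=R$ and $f_{R(h)}=h$). What this buys is economy: once the first identity is proved and the bijection is in hand, the second identity follows by pure category-theoretic reasoning without touching maxima again. The paper's direct computation, by contrast, is self-contained and makes explicit where Remark~\ref{remv2} enters, which is pedagogically clearer but slightly redundant given that the same monotonicity already underlies Corollary~\ref{bijection}. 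Either route is sound; your observation that they are interchangeable is a good one.
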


\begin{proof}
Let $f:(X,\leq,\mathcal{B}_X) \ra (Y,\leq,\mathcal{B}_Y)$ and $g:
(Y,\leq,\mathcal{B}_Y) \ra (Z,\leq,\mathcal{B}_Z)$ be morphisms in
$\hFor$. Let $y\in R(g\circ f)(x)$, i.e., $y\leq g(f(x))$. Since
$f(x)\leq f(x)$ we have that $y\in [R(g)\circ R(f)](x)$. Let $y\in
[R(g)\circ R(f)](x)$, i.e., $(x,y) \in R(g) \circ R(f)$. Then
there exists $z$ such that $z\leq g(x)$ and $y\leq f(z)$. Thus,
$y\leq f(z)\leq f(g(x))$, so $y\leq f(g(x))$. Hence, $(x,y) \in
R(g\circ f)$. Then $R(g\circ f) = R(g) \circ R(f)$.

Let $R:(X,\leq,\mathcal{B}_X) \ra (Y,\leq,\mathcal{B}_Y)$ and $S:
(Y,\leq,\mathcal{B}_Y) \ra (Z,\leq,\mathcal{B}_Z)$ be morphisms in
$\hFor$. Let $x\in X$. Then $f_{S\circ R}(x) = \text{max}(S\circ
R)(x)$ and $(f_{S}\circ f_{R})(x) = \text{max}(S(\text{max}\;
R(x)))$. We have that $y\leq \text{max} R(x)$ for every $y\in
R(x)$, so $S(y)\subseteq S(\text{max}\; R(x))$ for every $y\in
R(x)$. Thus, $\text{max}(S(y)) \leq \text{max}(S(\text{max}
R(x)))$ for every $y\in R(x)$, so $\text{max}(S\circ R)(x)\leq
\text{max}(S(\text{max} R(x)))$. On the other hand, $\text{max}
R(x) \in R(x)$, so $S(\text{max} R(x)) \subseteq (S\circ R)(x)$.
Then, $\text{max}(S(\text{max} R(x)))\leq \text{max}(S\circ
R)(x)$. Therefore, $\text{max}(S\circ R)(x)\leq
\text{max}(S(\text{max} R(x)))$.
\end{proof}

Straightforward computations based in Corollary \ref{bijection}
and Lemma \ref{bijectionb} proves the following result.

\begin{prop} \label{fsfo}
The categories $\ChFor$ and $\hFor$ are isomorphic. In particular,
there exists a categorical equivalence between $\hFor$ and
$\fPHilz$.
\end{prop}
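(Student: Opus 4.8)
The plan is to build a pair of mutually inverse functors between $\ChFor$ and $\hFor$ that act as the identity on objects. This is available because, by construction, the two categories share exactly the same objects, namely the structures $(X,\leq,\mathcal{B}_X)$ with $(X,\leq)$ a finite forest, $X\in\mathcal{B}_X$ and $\mathcal{B}_X$ an h-base. On morphisms I would use the two assignments already produced: the map $f\mapsto R(f)$ carrying an open map of $\hFor$ to the relation $R(f)$ determined by $(x,y)\in R(f)$ iff $y\leq f(x)$, and the map $R\mapsto f_R$ carrying a morphism $R$ of $\ChFor$ to the open map $f_R(x)=\max R(x)$. By Corollary \ref{bijection} these assignments are inverse bijections on the corresponding hom-sets, so all that remains to upgrade them to an isomorphism of categories is functoriality.

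Functoriality reduces to two verifications. Preservation of composition is precisely the content of Lemma \ref{bijectionb}, which yields $R(g\circ f)=R(g)\circ R(f)$ and, dually, $f_{S\circ R}=f_S\circ f_R$. Preservation of identities is the one point where I would unwind the explicit definitions: the identity of $\ChFor$ on $(X,\leq,\mathcal{B}_X)$ is the relation $\geq$, and $(x,y)\in R(\mathrm{id}_X)$ iff $y\leq\mathrm{id}_X(x)=x$, so $R(\mathrm{id}_X)={\geq}$; conversely $f_{\geq}(x)=\max\{y:y\leq x\}=x$, so $f_{\geq}=\mathrm{id}_X$. Hence both assignments send identities to identities.

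Putting these together, the identity-on-objects assignment together with $f\mapsto R(f)$ is a functor $\hFor\ra\ChFor$, its morphism-level inverse is $R\mapsto f_R$, and the two are mutually inverse by Corollary \ref{bijection}; therefore $\ChFor$ and $\hFor$ are isomorphic categories. For the final clause I would compose this isomorphism with the categorical equivalence between $\fPHilz$ and $\ChFor$ already established earlier in this section (obtained by restricting the equivalence of Corollary \ref{corrh} to finite objects and then identifying the finite full subcategory of $\PHSz$ with $\ChFor$). Since a composite of an equivalence with an isomorphism is again an equivalence, this delivers the asserted categorical equivalence between $\hFor$ and $\fPHilz$.

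I do not expect a genuine obstacle here: the argument is essentially bookkeeping, with all the substantive work residing in Corollary \ref{bijection} and Lemma \ref{bijectionb}. The only genuinely new step is the (trivial) identity-preservation computation displayed above, so the main care required is simply in assembling the prior results correctly and tracking which direction each functor goes.
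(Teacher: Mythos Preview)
Your proposal is correct and follows exactly the approach the paper intends: the paper's own proof is just the sentence ``Straightforward computations based in Corollary \ref{bijection} and Lemma \ref{bijectionb} proves the following result,'' and what you wrote is precisely the straightforward computation in question (identity-on-objects, $f\mapsto R(f)$ and $R\mapsto f_R$ on arrows, with Lemma \ref{bijectionb} for composition and the trivial check for identities), together with the composition with the already-established equivalence $\fPHilz\simeq\ChFor$.
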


The following definition will be useful to obtain a description of
the product in $\hFor$.

\begin{defn}
Let $(X,\leq,\mathcal{B}_X)$ and $(Y,\leq,\mathcal{B}_Y)$ be
objects in $\hFor$. Consider the tern $(X\otimes Y,\preceq,
\mathcal{B}_{X}\otimes \mathcal{B}_Y)$, where
$\mathcal{B}_{X}\otimes \mathcal{B}_Y$ is defined as follows:

$U\in \mathcal{B}_{X}\otimes \mathcal{B}_Y$ if and only if some of
the following conditions are satisfied:
\begin{enumerate}[\normalfont 1.]
\item $U = [u)$ for some $u\in X\otimes Y$. \item $U = [T)$ for
some $T\subseteq [(\overline{\pi_1})^{-1}(V)]^m$ with $V \in
\mathcal{B}_X$. \item $U = [T)$ for some $T\subseteq
[(\overline{\pi_2})^{-1}(V)]^m$ with $V \in \mathcal{B}_Y$.
\end{enumerate}
\end{defn}

Our aim is to prove that $(X\otimes Y, \preceq,
\mathcal{B}_{X}\otimes \mathcal{B}_Y)$ is the product in $\hFor$.
In order to make it possible we will use Proposition
\ref{prodFfin}.

Let $(X,\leq,\mathcal{B}_X), (Y,\leq,\mathcal{B}_Y) \in \hFor$.
Note that by definition $\mathcal{B}_{X}\otimes \mathcal{B}_Y$ is
a family of upsets of $(X\otimes Y, \preceq)$. Besides, $X\in
\mathcal{B}_X$ and $(\overline{\pi_1})^{-1}(X) = X\otimes Y$. Let
$M$ be the set of minimal elements of $X\otimes Y$. Since
$X\otimes Y = [M)$ we have that $X\otimes Y \in
\mathcal{B}_{X}\otimes \mathcal{B}_Y$.

In order to prove that $\mathcal{B}_{X}\otimes \mathcal{B}_Y$ is
an $h$-base of $(X\otimes Y, \preceq)$, let $u\in X\otimes Y$.
Then $[u)\in \mathcal{B}_{X}\otimes \mathcal{B}_Y$. Let $U\in
\mathcal{B}_{X}\otimes \mathcal{B}_Y$ and $M\subseteq U^m$. We
need to prove that $[M) \in \mathcal{B}_{X}\otimes \mathcal{B}_Y$.
If $M = \emptyset$ then $[M) = M \in \mathcal{B}_{X}\otimes
\mathcal{B}_Y$ by definition of $\mathcal{B}_{X}\otimes
\mathcal{B}_Y$. Suppose that $M\neq \emptyset$. If $U = [u)$ for
some $u$ then $[M) = U$, so $[M) \in \mathcal{B}_{X}\otimes
\mathcal{B}_Y$. Assume that $U = [T)$, where $T\subseteq
[(\overline{\pi_1})^{-1}(V)]^m$ and $V \in \mathcal{B}_X$. Since
$M\subseteq U^m\subseteq T$ then $M\subseteq
[(\overline{\pi_1})^{-1}(V)]^m$. Hence, $[M) \in
\mathcal{B}_{X}\otimes \mathcal{B}_Y$. The case  $U = [T)$, where
$T\subseteq [(\overline{\pi_1})^{-1}(V)]^m$ and $V \in
\mathcal{B}_Y$ is similar. So, $\mathcal{B}_{X}\otimes
\mathcal{B}_Y$ is an $h$-base of $X\otimes Y$. Therefore,
$(X\otimes Y, \preceq, \mathcal{B}_{X}\otimes \mathcal{B}_Y)\in
\hFor$.

\begin{lem}
Let $(X,\leq,\mathcal{B}_X)$ and $(Y,\leq,\mathcal{B}_Y)$ be
objects in $\hFor$. Then the maps $\overline{\pi_1}:(X\otimes Y,
\preceq, \mathcal{B}_{X}\otimes \mathcal{B}_Y) \ra
(X,\leq,\mathcal{B}_X)$ and $\overline{\pi_2}:(X\otimes Y,
\preceq, \mathcal{B}_{X}\otimes \mathcal{B}_Y) \ra
(Y,\leq,\mathcal{B}_Y)$ are morphisms in $\hFor$.
\end{lem}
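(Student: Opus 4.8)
The plan is to split the definition of a morphism in $\hFor$ into its two requirements: such a morphism is (i) an open order-preserving map between the underlying finite forests, and (ii) a map whose inverse image of every codomain base element lands in the domain base. For the two projections, requirement (i) is already available, since $\overline{\pi_1}$ and $\overline{\pi_2}$ are the product projections of Proposition \ref{prodFfin} and hence are open maps in $\Ffin$ (with $S=X$, $T=Y$). Moreover, the paragraph preceding the statement shows $(X\otimes Y,\preceq,\mathcal{B}_{X}\otimes\mathcal{B}_Y)\in\hFor$, so $X\otimes Y$ is a finite forest. Thus the entire content of the lemma reduces to verifying requirement (ii) for each projection.

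First I would handle $\overline{\pi_1}$. Fix $V\in\mathcal{B}_X$ and put $W=(\overline{\pi_1})^{-1}(V)$. Since $V$ is an upset of $X$ and $\overline{\pi_1}$ is order-preserving, $W$ is an upset of $(X\otimes Y,\preceq)$. Because $X\otimes Y$ is finite, every upset equals the upset generated by its own minimal elements, so $W=[W^m)$ with $W^m=[(\overline{\pi_1})^{-1}(V)]^m$. Taking $T=W^m$, we get $W=[T)$ with $T\subseteq[(\overline{\pi_1})^{-1}(V)]^m$ and $V\in\mathcal{B}_X$; this is exactly condition 2 in the definition of $\mathcal{B}_{X}\otimes\mathcal{B}_Y$. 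Hence $(\overline{\pi_1})^{-1}(V)\in\mathcal{B}_{X}\otimes\mathcal{B}_Y$, so $\overline{\pi_1}$ is a morphism in $\hFor$.

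The argument for $\overline{\pi_2}$ is symmetric: for $V\in\mathcal{B}_Y$ the set $(\overline{\pi_2})^{-1}(V)$ is again an upset of $X\otimes Y$, equal to $[T)$ for $T=[(\overline{\pi_2})^{-1}(V)]^m$, and condition 3 of the definition of $\mathcal{B}_{X}\otimes\mathcal{B}_Y$ applies verbatim. I do not expect a real obstacle, because the family $\mathcal{B}_{X}\otimes\mathcal{B}_Y$ was defined precisely so as to contain the preimages of base elements under the two projections; the only point needing care is the purely order-theoretic fact that in a finite poset an upset coincides with the upset generated by its minimal elements, which legitimizes the identification $W=[W^m)$ used above.
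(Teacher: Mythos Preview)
Your proposal is correct and follows essentially the same approach as the paper: both reduce to checking that $(\overline{\pi_i})^{-1}(V)$ lies in $\mathcal{B}_X\otimes\mathcal{B}_Y$ by writing this upset as $[M)$ with $M=[(\overline{\pi_i})^{-1}(V)]^m$ and invoking clause~2 (resp.~3) of the definition of $\mathcal{B}_X\otimes\mathcal{B}_Y$. Your write-up is in fact a bit more explicit about why the preimage is an upset and why $W=[W^m)$ in a finite poset, but the argument is the same.
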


\begin{proof}
We know that $\overline{\pi_1}$ and $\overline{\pi_2}$ are open
maps. Let $U\in \mathcal{B}_X$ and let $M =
[(\overline{\pi_1})^{-1}(U)]^m$. In particular, $M\subseteq
[(\overline{\pi_1})^{-1}(U)]^m$ and $[M) =
(\overline{\pi_1})^{-1}(U)$ because $(\overline{\pi_1})^{-1}(U)$
is an upset. Since $[M) \in \mathcal{B}_X \otimes \mathcal{B}_Y$
we have that $(\overline{\pi_1})^{-1}(U) \in \mathcal{B}_X \otimes
\mathcal{B}_Y$. In the same way can be proved that
$(\overline{\pi_2})^{-1}(U) \in \mathcal{B}_X \otimes
\mathcal{B}_Y$. Thus, $\overline{\pi_1}$ and $\overline{\pi_2}$
are morphisms in $\hFor$.
\end{proof}

Let $(X,\leq,\mathcal{B}_X), (Y,\leq,\mathcal{B}_Y)\in \hFor$. Let
$h: (Z,\leq,\mathcal{B}_Z) \ra (X\otimes Y,\leq,\mathcal{B}_X
\otimes \mathcal{B}_Y)$ be the map given in Lemma \ref{lemh}.
Consider $\alpha:(Z,\leq, \mathcal{B}_Z)\ra
(X,\leq,\mathcal{B}_X)$ and $\beta:(Z,\leq, \mathcal{B}_Z)\ra
(Y,\leq,\mathcal{B}_Y)$ morphisms in $\hFor$ such that
$\overline{\pi_1} \circ h = \alpha$ and $\overline{\pi_2} \circ h
= \beta$.

Let $u\in X\otimes Y$. The following technical lemma will be used
later.

\begin{lem} \label{minu}
Then $[h^{-1}(u)]^m \subseteq
[\alpha^{-1}(\overline{\pi_1}(u))]^m$ or $[h^{-1}(u)]^m \subseteq
[\beta^{-1}(\overline{\pi_2}(u))]^m$.
\end{lem}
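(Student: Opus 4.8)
The plan is to analyze $h^{-1}(u)$ through the chain lying below each of its elements. Fix $f\in h^{-1}(u)$ and write $u=\{u_0<\cdots<u_k\}$, so that $\overline{\pi_1}(u)=\pi_1(u_k)$ and $\overline{\pi_2}(u)=\pi_2(u_k)$. Setting $\gamma(g):=(\alpha(g),\beta(g))$, the condition $h(f)=u$ reads $\gamma((f])=u$. Since $Z$ is a forest, $(f]$ is a chain $g_0<\cdots<g_n=f$ with $g_0$ minimal, and because $\alpha,\beta$ preserve order, $\gamma$ restricts to an order preserving surjection of this chain onto the chain $u$; I would encode it by a non-decreasing surjection $\sigma\colon\{0,\dots,n\}\to\{0,\dots,k\}$ with $\gamma(g_i)=u_{\sigma(i)}$, $\sigma(0)=0$ and $\sigma(n)=k$. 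In particular $\alpha(f)=\pi_1(u_k)=\overline{\pi_1}(u)$ and $\beta(f)=\overline{\pi_2}(u)$, so $h^{-1}(u)\subseteq \alpha^{-1}(\overline{\pi_1}(u))\cap\beta^{-1}(\overline{\pi_2}(u))$; this is what makes the two candidate inclusions plausible.

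The key structural observation, which depends only on $u$ and not on $f$, is a dichotomy coming from the last link of the chain. Since $u_{k-1}<u_k$ strictly in the product order, at least one coordinate increases strictly there; as $\pi_1$ and $\pi_2$ are monotone along $u$, this forces either (A) $\pi_1(u_j)<\pi_1(u_k)$ for every $j<k$, or (B) $\pi_2(u_j)<\pi_2(u_k)$ for every $j<k$ (possibly both). I would fix whichever of (A), (B) holds once and for all for the given $u$.

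Finally I would exploit minimality of $f$. If some $i_0<n$ had $\sigma(i_0)=k$, then by monotonicity and surjectivity of $\sigma$ the values $0,\dots,k$ would already all occur among $\sigma(0),\dots,\sigma(i_0)$, whence $\gamma((g_{i_0}])=u$, i.e.\ $g_{i_0}\in h^{-1}(u)$ with $g_{i_0}<f$, contradicting $f\in[h^{-1}(u)]^m$. Hence $\sigma(i)<k$ for all $i<n$. In case (A) this gives $\alpha(g_i)=\pi_1(u_{\sigma(i)})<\pi_1(u_k)=\overline{\pi_1}(u)$ for every $i<n$; since in a forest the elements strictly below $f$ are exactly $g_0,\dots,g_{n-1}$, no element below $f$ lies in $\alpha^{-1}(\overline{\pi_1}(u))$, and together with $\alpha(f)=\overline{\pi_1}(u)$ this gives $f\in[\alpha^{-1}(\overline{\pi_1}(u))]^m$. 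As (A) was chosen uniformly in $f$, every element of $[h^{-1}(u)]^m$ is caught this way, so $[h^{-1}(u)]^m\subseteq[\alpha^{-1}(\overline{\pi_1}(u))]^m$; case (B) is symmetric and yields the other inclusion. The main obstacle is exactly this minimality step, i.e.\ turning ``$f$ is minimal in $h^{-1}(u)$'' into the combinatorial statement that the top value $u_k$ is attained only at the top of the chain $(f]$; everything else is monotonicity bookkeeping.
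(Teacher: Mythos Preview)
Your proof is correct, and it is organized more cleanly than the paper's own argument. Both proofs rest on the same two ingredients: first, that for $f\in[h^{-1}(u)]^m$ the value $u_k$ is reached only at the top of the chain $(f]$ (your step $\sigma(i)<k$ for $i<n$; the paper's property (P)); and second, that something is gained from the strict inequality $u_{k-1}<u_k$ in the product order.

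The genuine difference is where the disjunction is decided. You observe once and for all that along the chain $u$ at least one coordinate strictly increases at the last step, giving a global case split (A)/(B) depending only on $u$; every minimal $f$ then falls into the corresponding side uniformly. The paper instead works pairwise: it fixes two minimal elements $z,w$, and argues by cases that if $z\notin[\alpha^{-1}(\overline{\pi_1}(u))]^m$ then $z\in[\beta^{-1}(\overline{\pi_2}(u))]^m$, followed by a further argument that $w$ must then also lie in the latter set. Your formulation buys you exactly this last propagation step for free, since the choice of side never referred to the element. The paper's route is longer and somewhat harder to read, but ultimately encodes the same dichotomy; your packaging via the non-decreasing surjection $\sigma$ and the case split on the last link of $u$ is a neater way to expose it.
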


\begin{proof}
Let $z, w\in [h^{-1}(u)]^m$. It is enough to prove that $z,w \in
[\alpha^{-1}(\overline{\pi_1}(u))]^m$ or $z,w \in
[\beta^{-1}(\overline{\pi_2}(u))]^m$. Since $z,w \in h^{-1}(u)$
then $h(z) = h(w) = u$, so $\overline{\pi_1}(u) = \alpha(z) =
\alpha(w)$ and $\overline{\pi_2}(u) = \beta(z) = \beta(w)$. Thus,
$z, w \in \alpha^{-1}(\overline{\pi_1}(u))$ and $z,w\in
\beta^{-1}(\overline{\pi_2}(u))$. We will prove that $z,w \in
[\alpha^{-1}(\overline{\pi_1}(u))]^m$ or $z,w \in
[\beta^{-1}(\overline{\pi_2}(u))]^m$.

Let $(z] =\{z_0,z_1,\ldots,z_n\}$ and $(w] =
\{w_0,w_1,\ldots.w_k\}$, where $z_0<z_1<\ldots < z_n = z$ and
$w_0<w_1<\ldots <w_k = w$. By definition of $h$ we have that $h(z)
=\{(\alpha(z_0),\beta(z_0)),\ldots,(\alpha(z_n),\beta(z_n))\}$.
Notice that $(\alpha(z_0),\beta(z_0))\preceq \ldots \preceq
(\alpha(z_{n-1}),\beta(z_{n-1})\preceq
(\alpha(z_n),\beta(z_n))\}$. Suppose $(\alpha(z_{n-1}),
\beta(z_{n-1})) = (\alpha(z_n),\beta(z_n))$. Thus, $h(z_{n-1}) =
h(z_n) = u$. But $z_{n-1}<z_n$, $z_{n-1} \in h^{-1}(u)$ and $z_n
\in [h^{-1}(u)]^m$, which is a contradiction. Hence,
$(\alpha(z_{n-1}), \beta(z_{n-1}))\prec (\alpha(z_n),\beta(z_n))$.
Analogously it can be proved that $(\alpha(w_{k-1}),
\beta(w_{k-1}))\prec (\alpha(w_k),\beta(w_k))$. We write (P) for
the properties $(\alpha(z_{n-1}), \beta(z_{n-1}))\prec
(\alpha(z_n),\beta(z_n))$ and $(\alpha(w_{k-1}),
\beta(w_{k-1}))\prec (\alpha(w_k),\beta(w_k))$. Also note that
$\alpha(z) = \alpha(w)$ and $\beta(z) = \beta(w)$.

Suppose that it is not true that $z,w \in
[\alpha^{-1}(\overline{\pi_1}(u))]^m$. We can assume that $z
\notin [\alpha^{-1}(\overline{\pi_1}(u))]^m$ (the case $w\notin
[\alpha^{-1}(\overline{\pi_1}(u))]^m$ is similar). Let $z^{'}\leq
z$ with $z^{'} \in \beta^{-1}(\overline{\pi_2}(u))$. We will see
that $z = z^{'}$. Since $z \notin
[\alpha^{-1}(\overline{\pi_1}(u))]^m$ then there exists $z^{''}
\in [\alpha^{-1}(\overline{\pi_1}(u))]^m$ such that $z^{''}<z$. In
particular, $\alpha(z) = \alpha(z^{''}$. Then $z^{'}\leq z$ and
$z^{''}\leq z$. Since $Z$ is a forest we have that $z^{'}\leq
z^{''}$ or $z^{''}\leq z^{'}$. Suppose that $z^{'}\leq z^{''}$.
Then $\beta(z) = \beta(z^{'}) \leq \beta(z^{''}) \leq \beta(z)$,
so $\beta(z) = \beta(z^{''})$. Since $\alpha(z) = \alpha(z^{''})$
then $(\alpha(z),\beta(z^{''})$ and $z^{''}<z$ then we obtain a
contradiction by (P). Hence, we have proved that $z^{''}\leq z$.
Then $\alpha(z) = \alpha(z^{''}) \leq \alpha(z^{'}) \leq
\alpha(z)$, so $\alpha(z) = \alpha(z^{'})$. Besides $\beta(z) =
\beta(z^{'})$. Since $(\alpha(z),\beta(z)) =
(\alpha(z^{'},\beta(z^{'})$ and $z^{'}\leq z$ then by (P) we have
that $z = z^{'}$. Thus, $z\in
[\beta^{-1}(\overline{\pi_2}(u))]^m$. Finally we need to prove
that $w\in [\beta^{-1}(\overline{\pi_2}(u))]^m$. If $w\notin
[\alpha^{-1}(\overline{\pi_1}(u))]^m$ then $w\in
[\beta^{-1}(\overline{\pi_2}(u))]^m$, so we can assume that $w\in
[\alpha^{-1}(\overline{\pi_1}(u))]^m$. Suppose that $w\notin
[\beta^{-1}(\overline{\pi_2}(u))]^m$, so there exists $w^{'}$ such
that $w^{'}<w$ and $w^{'}\in \beta^{-1}(\overline{\pi_2}(u))$. In
particular, $\beta(w^{'}) = \beta(w)$. Since $h(z) = h(w)$ and
$w^{'}<w$ then by (P) there exists $z^{'}$ such that $z^{'}<z$ and
$h(z^{'}) = h(w^{'})$. Then $\alpha(z^{'}) = \alpha(w^{'})$ and
$\beta(z^{'}) = \beta(w^{'})$. Taking into account that $w^{'}\in
\beta^{-1}(\overline{\pi_2}(u))$ we have that $\beta(z^{'}) =
\beta(w^{'}) = \beta(w) = \beta(z)$, so $\beta(z) = \beta(z^{'})$.
But $z\in \beta^{-1}(\overline{\pi_2}(u))$, so $z^{'}\in
\beta^{-1}(\overline{\pi_2}(u))$ too. Since $z^{'}<z$ and $z\in
[\beta^{-1}(\overline{\pi_2}(u))]^m$ we have a contradiction.
Therefore, $w\in [\beta^{-1}(\overline{\pi_2}(u))]^m$, which was
our aim.
\end{proof}

\begin{cor}
The map $h$ is a morphism in $\hFor$.
\end{cor}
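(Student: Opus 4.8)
The plan is as follows. By Lemma \ref{lemh} the map $h$ is already a morphism in $\Ffin$, i.e.\ an order-preserving open map with $\overline{\pi_1}\circ h=\alpha$ and $\overline{\pi_2}\circ h=\beta$. Hence to see that $h$ is a morphism in $\hFor$ it only remains to check that $h^{-1}(U)\in\mathcal{B}_Z$ for every $U\in\mathcal{B}_{X}\otimes\mathcal{B}_Y$, and since $\mathcal{B}_{X}\otimes\mathcal{B}_Y$ consists exactly of the sets described in the three items of its definition, I would verify the condition for each of these three shapes separately. In every case $h^{-1}(U)$ is an upset of $Z$ (the preimage of an upset under a monotone map), so $h^{-1}(U)=[\,[h^{-1}(U)]^m\,)$; therefore, by the second defining clause of an $h$-base, it suffices to exhibit a single $B\in\mathcal{B}_Z$ with $[h^{-1}(U)]^m\subseteq B^m$.

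For the shapes of items 2 and 3 I would argue directly from openness. Suppose $U=[T)$ with $T\subseteq[(\overline{\pi_1})^{-1}(V)]^m$ and $V\in\mathcal{B}_X$. Put $W=(\overline{\pi_1})^{-1}(V)$ and $B=\alpha^{-1}(V)$; since $\alpha$ is a morphism in $\hFor$ and $V\in\mathcal{B}_X$ we have $B\in\mathcal{B}_Z$, and because $\overline{\pi_1}\circ h=\alpha$ we get $B=h^{-1}(W)$. I would then show that every minimal element $z$ of $h^{-1}([T))$ lies in $B^m$: using that $h$ is open one lowers $z$ to a point mapping to some $t\in T$, and minimality of $z$ forces $h(z)=t\in T\subseteq W^m$; then minimality of $t$ in $W$ (openness again) rules out any $z''<z$ with $h(z'')\in W$, so $z$ is minimal in $h^{-1}(W)=B$. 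Thus $[h^{-1}([T))]^m\subseteq B^m$ and the reduction of the first paragraph closes the case. Item 3 is symmetric, replacing $\alpha,\overline{\pi_1}$ by $\beta,\overline{\pi_2}$.

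The delicate case is item 1, $U=[u)$ with $u\in X\otimes Y$, and it is here that Lemma \ref{minu} is needed: a principal upset carries no preferred coordinate, so one cannot a priori decide whether to control it through $\alpha$ or through $\beta$. First I would record the identity $h^{-1}([u))=[h^{-1}(u))$, which follows from openness of $h$ (if $u\preceq h(z)$ then some $z'\leq z$ has $h(z')=u$). By Lemma \ref{minu}, $[h^{-1}(u)]^m$ is contained in $[\alpha^{-1}(\overline{\pi_1}(u))]^m$ or in $[\beta^{-1}(\overline{\pi_2}(u))]^m$. Treating the first alternative, set $a=\overline{\pi_1}(u)$ and $B=\alpha^{-1}([a))$; then $[a)\in\mathcal{B}_X$, so $B\in\mathcal{B}_Z$, and openness of $\alpha$ yields $B^m=[\alpha^{-1}(a)]^m$. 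Hence $[h^{-1}(u)]^m\subseteq B^m$, and the reduction gives $h^{-1}([u))=[h^{-1}(u))\in\mathcal{B}_Z$; the second alternative is symmetric. The main obstacle is exactly this case: making precise the two openness identities $h^{-1}([u))=[h^{-1}(u))$ and $B^m=[\alpha^{-1}(a)]^m$ and, above all, invoking Lemma \ref{minu} to supply the coordinate on which the fibre of $u$ is controlled.
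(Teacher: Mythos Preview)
Your proposal is correct and follows essentially the same approach as the paper: both split into the three cases of the definition of $\mathcal{B}_X\otimes\mathcal{B}_Y$, handle items 2 and 3 by reducing to $\alpha^{-1}(V)$ (resp.\ $\beta^{-1}(V)$) via $\overline{\pi_i}\circ h$, and invoke Lemma~\ref{minu} for item 1, concluding in each case through the second clause of the $h$-base definition. Your treatment is in fact slightly more explicit than the paper's in recording the auxiliary identities $h^{-1}([u))=[h^{-1}(u))$ and $[\alpha^{-1}([a))]^m=[\alpha^{-1}(a)]^m$ (the paper leaves these implicit and has a couple of evident typos in the corresponding passage), but the underlying argument is the same.
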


\begin{proof}
Let $u\in X\otimes Y$. First we will see that $h^{-1}([u)) \in
\mathcal{B}_{X}\otimes \mathcal{B}_Y$. Note that $h^{-1}([u)) =
[h^{-1}(u)) = ([h^{-1}(u)]^m]$. Besides, it follows from Lemma
\ref{minu} that $[h^{-1}(u)]^m \subseteq
[\alpha^{-1}(\overline{\pi_1}(u))]^m$ or $[h^{-1}(u)]^m \subseteq
[\beta^{-1}(\overline{\pi_2}(u))]^m$. Also note that we have
$\alpha^{-1}((\overline{\pi_1})^{-1}(u)) \in B_Z$ and
$\beta^{-1}((\overline{\pi_1})^{-1}(u)) \in \mathcal{B}_Z$ because
$\alpha$ and $\beta$ are morphisms in $\hFor$ and $[u) \in
\mathcal{B}_X \otimes \mathcal{B}_Y$. Thus, by definition of
$h$-base we conclude that $h^{-1}([u)) \in \mathcal{B}_{X}\otimes
\mathcal{B}_Y$.

Let $U = [T)$ for some $T\subseteq X\otimes Y$ and suppose that
$T\subseteq [(\overline{\pi_{1}})^{-1}(V)]^m$ for some $V\in
\mathcal{B}_X$. In particular, $T\subseteq
(\overline{\pi_{1}})^{-1}(V)$. Since $\overline{\pi_1} \circ h =
\alpha$ then $\alpha^{-1}(V) =
h^{-1}[(\overline{\pi_1})^{-1}(V)]$. Thus, $h^{-1}([T)) =
[h^{-1}(T)) \subseteq [\alpha^{-1}(V))$, so $h^{-1}([T))\subseteq
[\alpha^{-1}(V))$. We will prove that $[h^{-1}(T)]^m \subseteq
[\alpha^{-1}(T)]^m$. Let $z\in [h^{-1}(T)]^m$, so $h(z) \in T$.
Then $\overline{\pi_1}(h(z)) = \alpha(z)\in V$, i.e., $z\in
\alpha^{-1}(V)$. Let $w\leq z$ with $w\in \alpha^{-1}(T)$. Since
$\overline{\pi_1}(h(w)) = \alpha(w)$ then $h(w) \in
(\overline{\pi_{1}})^{-1}(V)$. Since $h(z) \in T\subseteq
[(\overline{\pi_{1}})^{-1}(V)]^m$ and $h(w)\leq h(z)$ then $h(z) =
h(w)$. But $z\leq w$ and $z\in [h^{-1}(T)]^m$, so $z = w$. Hence,
$[h^{-1}(T)]^m \subseteq [\alpha^{-1}(T)]^m$. Taking into account
that $\alpha$ is a morphism in $\hFor$ and that $V\in
\mathcal{B}_X$ we have that $\alpha^{-1}(V) \in \mathcal{B}_Z$.
Thus, by definition of $h$-base and the equality $[h^{-1}(T)]^m
\subseteq [\alpha^{-1}(T)]^m$ we have that $[[h^{-1}(T)]^m) \in
\mathcal{B}_Z$.
\end{proof}

The following theorem follows from the previous results of this
section and Proposition \ref{prodFfin}.

\begin{thm}\label{ptp}
Let $(X,\leq,\mathcal{B}_X)$ and $(Y,\leq,\mathcal{B}_Y)$ be
objects in the category $\hFor$. Then $(X\otimes
Y,\preceq,\mathcal{B}_X \otimes \mathcal{B}_Y)$ is the product in
$\hFor$.
\end{thm}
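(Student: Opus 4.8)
The plan is to verify the universal property of the categorical product directly, using as leverage the fact that $X\otimes Y$ already realizes the product in the larger category $\Ffin$ of finite forests and open maps (Proposition \ref{prodFfin}). The strategy is therefore to take the mediating open map supplied by that proposition and show, first, that it is in fact a morphism of $\hFor$, and second, that no other $\hFor$-morphism can serve the same purpose.

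First I would record the two facts already established in this section: that the triple $(X\otimes Y,\preceq,\mathcal{B}_X\otimes \mathcal{B}_Y)$ is an object of $\hFor$, and that the projections $\overline{\pi_1}$ and $\overline{\pi_2}$ are morphisms in $\hFor$. Next, given an arbitrary object $(Z,\leq,\mathcal{B}_Z)$ of $\hFor$ together with morphisms $\alpha:(Z,\leq,\mathcal{B}_Z)\ra (X,\leq,\mathcal{B}_X)$ and $\beta:(Z,\leq,\mathcal{B}_Z)\ra (Y,\leq,\mathcal{B}_Y)$, I would forget the distinguished families and view $\alpha,\beta$ merely as open maps of finite forests. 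By Proposition \ref{prodFfin} there is a unique open map $h:Z\ra X\otimes Y$ in $\Ffin$ satisfying $\overline{\pi_1}\circ h=\alpha$ and $\overline{\pi_2}\circ h=\beta$; this $h$ is precisely the map described in Lemma \ref{lemh}.

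The substantive part of the argument is then to upgrade $h$ from a $\Ffin$-morphism to a $\hFor$-morphism, i.e. to check that $h^{-1}(U)\in \mathcal{B}_Z$ for every $U\in \mathcal{B}_X\otimes \mathcal{B}_Y$. This is exactly the content of the corollary preceding the theorem, whose proof rests on the technical Lemma \ref{minu} relating the minimal elements of a fibre $h^{-1}(u)$ to those of $\alpha^{-1}(\overline{\pi_1}(u))$ and $\beta^{-1}(\overline{\pi_2}(u))$. I expect this reduction — showing that the preimage under $h$ of each of the three kinds of basic sets defining $\mathcal{B}_X\otimes\mathcal{B}_Y$ lands back in $\mathcal{B}_Z$ — to be the only genuine obstacle; the rest is formal bookkeeping.

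Finally, for uniqueness I would observe that a morphism in $\hFor$ is an open map of forests (with an extra condition on preimages) and that composition in $\hFor$ is ordinary composition of maps, the same as in $\Ffin$. Hence any $\hFor$-morphism $g:Z\ra X\otimes Y$ with $\overline{\pi_1}\circ g=\alpha$ and $\overline{\pi_2}\circ g=\beta$ is in particular an open map of forests with these two properties, so the uniqueness clause of Proposition \ref{prodFfin} forces $g=h$. Assembling these steps yields the universal property, and therefore $(X\otimes Y,\preceq,\mathcal{B}_X\otimes\mathcal{B}_Y)$ is the product of $(X,\leq,\mathcal{B}_X)$ and $(Y,\leq,\mathcal{B}_Y)$ in $\hFor$.
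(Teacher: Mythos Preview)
Your proposal is correct and matches the paper's own argument essentially step for step: the paper's proof is the single line ``follows from the previous results of this section and Proposition \ref{prodFfin},'' and you have simply unpacked what those previous results are (the object and projection lemmas, Lemma \ref{minu}, and the corollary that $h$ is an $\hFor$-morphism) together with the observation that uniqueness in $\hFor$ is inherited from uniqueness in $\Ffin$.
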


\section{Some remarks on the coproduct of finite prelinear Hilbert
algebras}\label{s7}

In this section we apply the just developed construction of the
product in the category $\hFor$, together with the duality between
$\hFor$ and $\PHilz$, in order to build explicit constructions for
some finite coproducts in both $\PHilz$ and in $\PHil$.
\vspace{5pt}

Recall, from Lemma \ref{lemfin}, that $H \cPHilz G$ is finite
whenever $H$ and  $G$ are finite algebras in $\PHilz$. A
straightforward verification shows that the free prelinear Hilbert
algebra with cero in one generator $p$ coincides with the free
G\"odel algebra in one generator. Hence, its underlying lattice is
as depicted below.

\vspace{5pt}

\centerline{
 \xymatrix{
            &  & 1 \\
            & \neg \neg p \ra p \edge{ur}\edge{dr} & &  \ \ \neg \neg p\ \ \edge{ul}\edge{dl} \\
  \neg p \edge{ur}\edge{dr}  &  & p  \edge{dl}\\
    & 0 & &}
}

In particular, it is a finite algebra and in consequence, any
finitely generated free algebra is also finite. That is to say,
the variety $\PHilz$ is locally finite.

Now, given finite algebras  $H,G$ in $\PHilz$, write $H\cfPHilz G$
for the bounded prelinear Hilbert algebra obtained by dualizing
the construction of Theorem \ref{ptp}, applying Proposition
\ref{fsfo}.

Write $j_H:H\ra H \cfPHilz G$ and $j_G:G\ra H \cfPHilz G$ to the
morphisms induced by the proyections by the duality of Proposition
\ref{fsfo}, and assume there are $f: H\ra K$ and $g:G\ra K$,
morphisms in $\PHilz$. Let $L$ be the subalgebra of $K$ generated
by $f(H) \cup g(G)$, which is finite because $f(H) \cup g(G)$ is
finite and $\PHilz$ is locally finite. Since $L$ is a finite, the
aforementioned duality together with the universal property of the
product in $\hFor$ guarantee the existence of a unique morphism
$h: H \cfPHilz G \ra K$ such that $f = h \circ j_G $ and $g = h
\circ j_H$, proving that $H\cfPHilz G \cong H \cPHilz G$. Hence,
Theorem \ref{ptp} allows us to give and explicit calculation of
the finite coproduct of finite algebras in $\PHilz$. \vskip.3cm

Let us finally see how to adapt the construction for the
coproduct in $\PHilz$, to get an explicit construction for the
coproduct of two finite (non necessarily bounded) prelinear
Hilbert algebras in $\PHil$. Note that prelinear property does
not play any essential role in the following argument, so, if we would
have an explicit construction for the coproduct in the category of
bounded Hilbert algebras, we had been able to get one in the category
of Hilbert algebras too.

Let $(H, \ra, 1) \in \PHil$. Consider the correspondence $H
\mapsto H^0$ that assigns to $H$ the algebra $(H^0, \ra_0 , 1, 0)$
of type (2,0,0), whose universe is the disjoint union $H \sqcup
\{0\}$ and whose binary operation is given by $h \ra_0 k = h \ra
k$, if $h,k \in H$, $0 \ra_0 h = 1$, if $h \in H$ or $h = 0$ and
$h \ra_0 0 = 0$, if $h \in H$. It can be checked that $H^0$, as
defined above, is a bounded prelinear Hilbert algebra and that
this correspondence extends to a functor $(\ )^0 : \PHil \to
\PHilz$ by assigning to $f : H \to K \in \PHil$ the morphism
$$
f^0(h) :=
\left\{
\begin{array}{c}
  f(h), \textrm{ if } h \in H, \\
  0, \ \ \ \ \textrm{ if } h =0 ,
\end{array}
\right.
$$
in $\PHilz$. For any $H \in \PHil$, let us write $i_H: H \to H^0$
for the inclusion of $H$ into $H^0$, which can be seen to be a
morphism in $\PHil$.

Let us now proceed to the construction of the coproduct in $\PHil$
of two finite algebras $G, H \in \PHil$. Let us first compute $H^0
\cPHilz G^0 \in \PHilz$, as before, and write $j_{G^0}$ and
$j_{H^0}$ for the natural inclusions\footnote{It can be seen that
in this case, natural inclusions are in fact injections. Let us
see that $j_{H^0}$ is injective. Take $t: G^0 \to H^0$, the
morphism in $\PHilz$ given by $t(g) = 1$, if $g \in G$ and $t(g) =
0$ if $g = 0$, and consider the cocone in $\PHilz$ $id_{H^0}: H^0
\ra H^0 \leftarrow G^0: t$. By the universal property of the
coproduct, there is a unique $h: H^0 \cPHilz G^0 \to H^0$ in
$\PHilz$, such that $h \circ j_{H^0} = id_{H^0}$. Since,
$id_{H^0}$ is injective, so is $j_{H^0}$.} Put $k_{G} =
j_{G^0}\circ i_G$ and $k_{H} = j_{H^0}\circ i_H$ for the
inclusions of $G$ and $H$ into $G^0 \cPHilz H^0$. Write $G \ast H$
for the Hilbert subalgebra of $G^0 \cPHilz H^0$ generated by
$k_G(G) \cup k_H(H)$, and $j_G = {{k_G}_|}_G$ and $j_H =
{{k_H}_|}_H$, respectively. Let us check that $G \ast H$ with the
injections $k_G$ and $k_H$ is the coproduct of $G$ and $H$ in
$\PHil$. In order to do that, take $K \in \PHil$ endowed with two
morphisms $g: G \to K$ and $h: H \to K$, also in $\PHil$. Since
$g^0: G^0 \to K^0$ and $h^0: H^0 \to K^0$ are in $\PHilz$, by the
universal property of $G^0 \cPHilz H^0$, we have that there is a
unique $k: G^0 \cPHilz H^0 \to K^0$ such that $k \circ j_{G^0} =
g^0$ and $k \circ j_{H^0} = h^0$. Writing $i: G \ast H \to G^0
\cPHilz H^0$ for the inclusion morphism (in $\PHil$), we have that
$(k \circ i)(G \ast H) \subseteq i_K(K)$. Hence, the restriction
of $k$ to  $i(G \ast H)$, is a morphism $\hat{k}: G \ast H \to K
\in \PHil$, that makes the following diagram commute.

$$
\xymatrix{
& K  & \\
G \ar[ur]_{g} \ar[dr]_{j_G} & & H \ar[ul]^{h} \ar[dl]^{j_H}\\
& G\ast H \ar[uu]^{\hat{k}} & }
$$

To see the uniqueness of the morphism $\hat{k}$. Suppose that
$\delta:G\ast H \ra K \in \PHil$ also renders the diagram
commutative. Then $\delta(j_{G}(a)) = g(a) = \hat{k}(j_{G}(a))$
for every $a\in G$ and $\delta(j_{H}(b)) = h(b) =
\hat{k}(j_{H}(b))$ for every $b\in H$. Since $j_{G}(G) \cup
j_{H}(H)$ generates $G\ast H$ then $\delta = \hat{k}$.
\vskip.3cm
Let us end this section calculating a coproduct both in
$\PHilz$ and in $\PHil$ with the procedures described above.

\begin{ex} Let $G = H = 2$, the implicative reduct of the
boolean algebra with two elements. Since 2 is bounded, let
us compute their coproduct in $\PHilz$. Since $\X(2) \cong 1$, and $1 \times 1 = 1$,
we get that $G \cfPHilz H \cong 2$.
\end{ex}

\begin{ex} Let us consider again $G = H = 2$, but now compute their coproduct in $\PHil$.
Let us start by noticing that $G^0 = H^0 = 2^0 = 3$, the implicative reduct of the G\"odel
chain with 3 elements. Hence, $\X(G^0) \cong \X(H^0) \cong \X(3) \cong 2$, here, we write 2 for the
two elements chain. A direct computation of their product in $\hFor$ shows that $\X(G^0 \cfPHilz H^0)$ is
as depicted below.
\[
\xymatrix{
\circ              &          & \circ\\
\circ   \ar@{-}[u] & \circ    & \bullet \ar@{-}[u]\\
                   & \bullet \ar@{-}[ur] \ar@{-}[u] \ar@{-}[ul] &
}
\]
Here, the empty bullets correspond to one of the two nontrivial basic sets. The other is symmetrical.
Hence, $G^0 \cfPHilz H^0$ is the Hilbert algebra, generated by $a$ and $b$, whose underlying algebra is depicted bellow.
$$
\tiny
{
\xymatrix@C=.2em
{
               &                              &                          &      1                       &                           &                             &              \\
               & d  \to c  \ar@{-}[urr]              &                          &(b \to a) \to ((a \to b) \to a)  \ar@{-}[u]                  &                           & c \to d \ar@{-}[ull]              &               \\
b \to a \ar@{-}[ur]  &                              & c = (a \to b) \to b \ar@{-}[ul] \ar@{-}[ur]&                              & d = (b \to a) \to a \ar@{-}[ul] \ar@{-}[ur] &                             & a \to b \ar@{-}[ul] \\
               & (a \to b) \to a \ar@{-}[ul] \ar@{-}[ur]    &                          &                              &                           & (b \to a) \to b \ar@{-}[ul] \ar@{-}[ur]   &      \\
               & a  \ar@{-}[u] \ar@{-}[uurrr] &                          &                              &                           & b  \ar@{-}[u] \ar@{-}[uulll]&        \\
               &                              &                          & 0 \ar@{-}[urr]  \ar@{-}[ull] &                           &                             &
}
}
$$
Finally, $G \ast H$ is the Hilbert subalgebra $(G^0 \cfPHilz H^0) - \{ 0 \}$, which is the free prelinear Hilbert algebra in two generators.
\end{ex}

\subsection*{Acknowledgments}

This work was supported by CONICET-Argentina [PIP
112-201501-00412].

{\small }

-----------------------------------------------------------------------------------
\\
\\
Sergio Arturo Celani, \\
Departamento de Matem\'atica,\\
Facultad de Ciencias Exactas (UNCPBA),\\
Pinto 399, Tandil (7000),\\
and Conicet, Argentina,\\
scelani@exa.unicen.edu.ar

-----------------------------------------------------------------------------------
\\
Jos\'e Luis Castiglioni,\\
Departamento de Matem\'atica, \\
Facultad de Ciencias Exactas (UNLP), \\
and CONICET.\\
Casilla de correos 172,\\
La Plata (1900), Argentina.\\
jlc@mate.unlp.edu.ar

-----------------------------------------------------------------------------------------
\\
Hern\'an Javier San Mart\'in,\\
Departamento de Matem\'atica, \\
Facultad de Ciencias Exactas (UNLP), \\
and CONICET.\\
Casilla de correos 172,\\
La Plata (1900), Argentina.\\
hsanmartin@mate.unlp.edu.ar


\begin{thebibliography}{99}

\bibitem{AGM}
Aguzzoli S., Gerla B. and Marra V., \emph{Gödel algebras free over
finite distributive lattices}. Annals of Pure and Applied Logic
155, 183-–193 (2008).

\bibitem{BD}
Balbes R. and Dwinger P., \emph{Distributive Lattices}. University
of Missouri Press (1974).

\bibitem{BS}
Burris H. and Sankappanavar H.P., \emph{A Course in Universal
Algebra}. Springer Verlag, New York (1981).

\bibitem{Bu}
Bu\c{s}neag D., \emph{A note on deductive systems of a Hilbert algebra}.
Kobe J. Math. 2, 29--35 (1985).

\bibitem{B}
Bu\c{s}neag D. and Ghi\c{t}\v{a} M., \emph{Some latticial
properties of Hilbert algebras}. Bull. Math. Soc. Sci. Math. 53
(101) No. 2, 87--107 (2010).

\bibitem{CCM}
Cabrer L.M., Celani S.A. and Montangie D., \emph{Representation
and duality for Hilbert algebras}. Central European Journal of
Mathematics 7(3), 463--478 (2009).

\bibitem{CSM} Castiglioni J.L. and San Martín H.J.,
\emph{Variations of the free implicative semilattice extension of
a Hilbert algebra}. Soft Computing DOI 10.1007/s00500-018-3426-0.

\bibitem{Cel}
Celani S.A., \emph{A note on homomorphism of Hilbert algebras}.
Int. J. Math. Math. Sci., 29(1), 55–-61 (2002).

\bibitem{CJ2}
Celani S.A. and Jansana R., \emph{On the free implicative
semilattice extension of a Hilbert algebra}. Mathematical Logic
Quarterly 58, 3, 188--207 (2012).

\bibitem{CJ3}
Celani S.A. and Jansana R., \emph{A note on Hilbert algebras and
their related generalized Esakia spaces}. Order, vol. 25, no. 3,
429--458 (2016).

\bibitem{CMs}
Celani S.A. and Montangie D., \emph{Hilbert algebras with
supremum}. Algebra Universalis 67, No. 3, 237--255 (2012).

\bibitem{CHJ}
Chajda I., Halas R. and Jun Y.B, \emph{Annihilators and deductive
systems in commutative Hilbert algebras}.
Comment.Math.Univ.Carolinae 43, 3, 407--417 (2002).

\bibitem{D}
Diego A., \emph{Sobre Algebras de Hilbert}. Notas de Lógica Matemática.
Instituto de Matemática, Universidad Nacional del Sur, Bahía
Blanca (1965).

\bibitem{Es}
Esakia L., \emph{Topological Kripke Models}. Soviet. Math. Dokl.,
15, 147--151 (1974).

\bibitem{Guille}
Martínez G. and Priestley H.A., \emph{On Priestley Spaces of
Lattice-Ordered Algebraic Structures}, Order \textbf{15}, 297--323
(1998).

\bibitem{Hajek}
H\'ajek P., \emph{Metamathematics of Fuzzy Logic}, Kluwer Academic Publishers, Dordrecht (1998).

\bibitem{H}
Horn A., \emph{The separation theorem of intuitionistic propositional
calculus}. Journal of Symbolic Logic 27, 391--399 (1962).

\bibitem{Horn}
Horn A., \emph{Logic with truth values in a linearly ordered
Heyting algebra}, J. Symbolic Logic 34, 395--405 (1969).

\bibitem{K}
Katri\v{n}\'{a}k T.,\emph{Remarks on the W. C. Nemitz's paper
``Semi-Boolean lattices''}. Notre Dame J. Formal Logic, Duke
University Press 11, 425--430 (1970).

\bibitem{A. Monteiro 1980}
Monteiro A., \emph{Sur les algebres de Heyting sym\'{e}triques}.
Portugaliae Mathematica 39 (1--4), 1--237 (1980).

\bibitem{A. Monteiro 1996}
Monteiro A., \emph{Les algebras de Hilbert lin\'{e}aires}.
Unpublished papers I, in Notas de L\'{o}gica Matem\'{a}tica, Vol.
40 (1996). Available in
http://inmabb-conicet.gob.ar/publicaciones/nlm/Nlm-40.pdf.

\bibitem{Mor}
Morandi P., \emph{Dualities in Lattice Theory}, Mathematical notes
avalaible in http://sierra.nmsu.edu/morandi/.

\bibitem{N}
Nemitz W., \emph{Implicative semi-lattices}. Trans. Amer. Math.
Soc. 117, 128--142 (1965).

\bibitem{Pr1}
Priestley H.A., \emph{Representation of distributive lattices by
means of ordered Stone spaces}. Bull. London Math Soc. 2, 186--190
(1970).

\bibitem{Pr2}
Priestley H.A., \emph{Ordered topological spaces and the
representation of distributive lattices}. Proceedings London Math
Soc. 3 (24), 507--530 (1972).

\bibitem{R}
Rasiowa H., \emph{An algebraic approach to non-classical logics}.
In Studies in logic and the Foundations of Mathematics 78.
Nort-Holland and PNN (1974).


\end{thebibliography}
\end{document}